\newcommand{\1}{\mathds{1}}
\newcommand{\0}{\mathds{O}}
\newcommand{\R}{\mathbb{R}}
\newcommand{\N}{\mathbb{N}}
\newcommand{\Bo}{\mathcal{B}}
\newcommand{\Io}{\mathcal{I}}
\newcommand{\Jo}{\mathcal{J}}
\newcommand{\8}{\infty}
\newcommand{\supp}{\mathrm{supp}}
\newcommand{\Co}{\mathcal{C}}
\newcommand{\Fo}{\mathcal{F}}
\newcommand{\Int}{\mathrm{int}}
\DeclareMathOperator*{\bigplus}{\scalerel*{+}{\sum}}
\newcounter{dummy} \numberwithin{dummy}{section}
\newtheorem{theorem}[dummy]{Theorem}
\newtheorem{lemma}[dummy]{Lemma}
\newtheorem{proposition}[dummy]{Proposition}
\newtheorem{corollary}[dummy]{Corollary}
\newtheorem{question}[dummy]{Question}
\theoremstyle{remark}
\newtheorem{remark}[dummy]{Remark}
\begin{document}

\title{Characterizations of the projection bands and some order properties of the lattices of continuous functions}
\author{Eugene Bilokopytov\footnote{Email address bilokopy@ualberta.ca, erz888@gmail.com.}}
\maketitle

\begin{abstract}
We show that for an ideal $H$ in an Archimedean vector lattice $F$ the following conditions are equivalent:
\begin{itemize}
\item $H$ is a projection band;
\item Any collection of mutually disjoint vectors in $H$, which is order bounded in $F$, is order bounded in $H$;
\item $H$ is an infinite meet-distributive element of the lattice $\mathcal{I}_{F}$ of all ideals in $F$ in the sense that $\bigcap\limits_{J\in \mathcal{J}}\left(H+ J\right)=H+ \bigcap \mathcal{J}$, for any $\mathcal{J}\subset \mathcal{I}_{F}$.
\end{itemize}
Additionally, we show that if $F$ is uniformly complete and $H$ is a uniformly closed principal ideal, then $H$ is a projection band. In the process we investigate some order properties of lattices of continuous functions on Tychonoff topological spaces. \\

\emph{Keywords:} Vector lattices, Continuous functions, Projection properties;

MSC2020 06E15, 46A40, 46E25.
\end{abstract}

\section{Introduction and preliminaries}

In this article we largely continue the work started in \cite{erz}, where we studied some properties of abstract Archimedean vector lattices by ``modelling'' them on the dense sublattices of $\Co\left(K\right)$, for compact Hausdorff $K$. While the aforementioned paper was mostly dedicated to order continuity of lattice homomorphisms and regularity of sublattices, the present article is mainly concerned with characterization of projection bands. The subobjects of this type correspond to decompositions of the vector lattices into direct sums, and so it is important to track their presence. In particular, the principal projection property (PPP) often plays the role of the minimal ``richness'' assumption about a vector lattice.\medskip

The main results of the paper characterize projection bands in a ``horizontal'' and a ``vertical'' ways. Namely, the former (Theorem \ref{infd}) claims that an ideal $H$ in an Archimedean vector lattice $F$ is a projection band if and only if it is an infinite meet-distributive element of the lattice $\mathcal{I}_{F}$ of all ideals of $F$ in the sense that $\bigcap\limits_{J\in \mathcal{J}}\left(H+ J\right)=H+ \bigcap \mathcal{J}$, for any $\mathcal{J}\subset \mathcal{I}_{F}$; the latter (Theorem \ref{main}) asserts that $H$ is a projection band if and only if for the (disjoint) subsets of $H$ order boundedness in $H$ is equivalent to order boundedness in $F$.\medskip

On the way to proving these results, in Section \ref{s} we take a detour to investigate various properties of the sublattices of the spaces of continuous function on Tychonoff spaces, in particular the structure of ideals on such spaces (where we continue the line of investigation in e.g. \cite{et2,kv1}). In particular, we prove several versions of the Urysohn lemma (Proposition \ref{dul}, Corollary \ref{ssul}, Proposition \ref{od} and Remark \ref{urur}), most of which can then be reinterpreted as representations of sublattices of $\Co\left(X\right)$ as sums of two ideals (Proposition \ref{dech}). We also characterize regular sublattices of $\Co_{b}\left(X\right)$ in terms of the supports of the order dense ideals in them (Proposition \ref{rod}).\medskip

Section \ref{p} is dedicated to projection bands in abstract vector lattices, in particular it contains the two theorems quoted above. On top of that we show that being a projection band is a local property (Proposition \ref{pb}), and find the largest ideal of a vector lattice which has PP (the projection property) or PPP (Corollary \ref{pppp}). We also show that a vector lattice has PP or PPP if it has a majorizing order dense sublattice with PP or PPP, respectively (Corollary \ref{pb2}). As a consequence of Theorem \ref{main}, we show (Corollary \ref{kks}) that a vector lattice has PP if and only if it is ``locally'' a dense sublattice of $\Co\left(K\right)$, for a compact extremally disconnected Hausdorff $K$. Finally, in Corollary \ref{sm} we prove that a uniformly closed principal ideal in a uniformly complete Archimedean vector lattice is a projection band.\medskip

One of the motives permeating this work is that some properties of a vector lattice are ``horizontal'' in nature, in particular completely describable on the level of the compact spaces on which the vector lattice is modelled. An alternative way of discussing these ``horizontal'' properties is in terms of the lattice of the ideals of a vector lattice. In this section we take an opportunity to draw attention to the utility of this language, in particular to the fact (Corollary \ref{com}) that in an Archimedean vector lattice bands and projection bands can be described as pseudo-complemented and complemented elements of the lattice of the ideals, respectively. We also examine the equivalent characterizations of the ideals of a vector lattice (Proposition \ref{id}), and discuss some correspondences between the vector lattice concepts and their lattice-theoretic counterparts. Roughly speaking a vector lattice concept ``restricted'' to the positive cone becomes the corresponding lattice-theoretic concept (Proposition \ref{lid}, Corollary \ref{lidd} and Corollary \ref{idis}). The material of this section is not novel, but it does not seem to be present in the literature in a coherent form.

\subsection{Some facts from lattice theory}

In this subsection we gather some results and concepts from the general lattice theory. All this material can be found in the standard references such as \cite{birk} and \cite{gr}. Everywhere in this section $\left(P,\le\right)$ is a partially ordered set.

We will call $P$ a \emph{join / meet semi-lattice}, if $p\vee q:=\sup\left\{p,q\right\}$ / $p\wedge q:=\inf\left\{p,q\right\}$ exists in $P$, for every $p,q\in P$. If $P$ is both join and meet semi-lattice, it is called a \emph{lattice}. A \emph{complete lattice} is a lattice in which every set has infimum and supremum. Note that if $Q\subset P$ is \emph{join-closed}, i.e. if $p\vee q\in Q$, for every $p,q\in Q$, then $p\vee q=p\vee_{Q} q$ (the latter means the supremum with respect to the partially ordered set $Q$). A \emph{sublattice} of a lattice is a subset which is both meet-closed and join-closed. We will call a sublattice $Q\subset P$ \emph{regular} if $r=\bigvee_{Q} R$ $\Rightarrow$ $r=\bigvee_{P} R$, for $R\subset Q$ and $r\in Q$, and the same for infimum.

Recall that an \emph{ideal} in a lattice $P$ is a nonempty join-closed down-closed subset (i.e. $J\subset P$ is an ideal, if $p,q\in J$, $r\in P$ $\Rightarrow$ $p\vee q, p\wedge r \in J$). We denote the set of all ideals in $P$ by $\Jo_{P}$. It is easy to see that a nonempty intersection of any collection of ideals is an ideal. Hence, $\Jo_{P}\cup\left\{\varnothing\right\}$ is a complete lattice.

We will say that a lattice $P$ is \emph{distributive} if $p\wedge\left(q\vee r\right)=p\wedge q\vee p\wedge r$, for any $p,q,r\in P$. Note that the inequality $\ge$ is always satisfied. This condition is self-dual in the sense that it is equivalent to $p\vee q\wedge r=\left(p\vee q\right)\wedge \left(p\vee r\right)$, for any $p,q,r\in P$. We will say that $P$ is \emph{infinite-join distributive}, if $p\wedge\bigvee R=\bigvee\limits_{r\in R} p\wedge r$, for every $p\in P$ and $R\subset P$ for which $\bigvee R$ exists. In this case $\bigvee Q\wedge\bigvee R=\bigvee\limits_{q\in Q,~ r\in R} p\wedge q$, for any $Q,R\subset P$ with supremums.

\begin{proposition}\label{dec}Let $P$ be a distributive lattice. Then:
\item[(i)] $\Jo_{P}$ is an infinite-join distributive lattice. It is complete if and only if $P$ has the least element.
\item[(ii)] $J\vee_{\Jo_{P}}H=J\vee H:=\left\{j\vee h,~ j\in J,~ h\in H\right\}$ and $J\wedge_{\Jo_{P}}H=J\cap H$, for any $J,H\in\Jo_{P}$.
\item[(iii)] If $\Io\subset \mathcal{J}_{P}$, then $\bigvee_{\Jo_{P}} \Io=\bigvee\Io:=\left\{j_{1}\vee...\vee j_{n},~ j_{k}\in J_{k}\in \Io\right\}$ and $\bigwedge_{\Jo_{P}} \Io=\bigcap\Io$.
\end{proposition}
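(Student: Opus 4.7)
The plan is to establish (ii) first, since the formulas it provides are the building blocks for both (iii) and the infinite-join distributivity of (i), and then handle completeness of $\Jo_P$ at the end.

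For (ii), I would check that $J\vee H:=\{j\vee h:j\in J,h\in H\}$ is an ideal of $P$. Join-closedness is immediate: $(j_1\vee h_1)\vee(j_2\vee h_2)=(j_1\vee j_2)\vee(h_1\vee h_2)$. For down-closedness, this is where distributivity enters: if $p\le j\vee h$, then
\[
p=p\wedge(j\vee h)=(p\wedge j)\vee(p\wedge h),
\]
and $p\wedge j\in J$, $p\wedge h\in H$ since $J,H$ are down-closed. The set $J\vee H$ contains $J$ (via down-closedness from any $j\vee h$) and $H$ symmetrically, while any ideal containing $J\cup H$ must contain every $j\vee h$, so $J\vee H$ is the join in $\Jo_P$. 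The equality $J\wedge_{\Jo_P}H=J\cap H$ follows because the intersection is noted to be an ideal and is evidently the largest ideal contained in both.

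For (iii), I would run the same argument for finite joins. The set $\bigvee\Io:=\{j_1\vee\cdots\vee j_n:j_k\in J_k\in\Io\}$ is join-closed by concatenation and down-closed because, if $p\le j_1\vee\cdots\vee j_n$, then distributivity (iterated) gives $p=(p\wedge j_1)\vee\cdots\vee(p\wedge j_n)$ with each $p\wedge j_k\in J_k$. Containment of every member of $\Io$ and minimality among ideals containing $\bigcup\Io$ are automatic. For $\bigwedge_{\Jo_P}\Io$, a nonempty intersection of ideals is an ideal and is clearly the largest ideal contained in each $J\in\Io$; if the intersection is empty this formula still holds with the convention $\varnothing\in\Jo_P\cup\{\varnothing\}$ introduced before the statement.

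For the infinite-join distributivity in (i), fix $J\in\Jo_P$ and $\Io\subset\Jo_P$ for which $\bigvee\Io$ exists. The inclusion $\bigvee_{H\in\Io}(J\cap H)\subseteq J\cap\bigvee\Io$ is obvious since each $J\cap H\subseteq J$ and each $J\cap H\subseteq\bigvee\Io$. For the reverse, take $p\in J\cap\bigvee\Io$; by (iii) there exist $h_1\in H_1,\ldots,h_n\in H_n$ with $H_k\in\Io$ such that $p\le h_1\vee\cdots\vee h_n$, and then
\[
p=(p\wedge h_1)\vee\cdots\vee(p\wedge h_n)
\]
expresses $p$ as a finite join of elements of $\bigcup_{H\in\Io}(J\cap H)$, whence $p\in\bigvee_{H\in\Io}(J\cap H)$ by (iii) again.

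Finally, for the completeness clause, I would argue both directions in parallel using principal ideals. If $P$ has a least element $0$, then $0$ lies in every nonempty down-closed set, so any intersection of ideals is nonempty and hence an ideal; combined with the top element $P\in\Jo_P$, this makes $\Jo_P$ a complete lattice. Conversely, if $\Jo_P$ is complete it has a least element $J_0$; for each $p\in P$ the principal ideal $\langle p\rangle=\{q:q\le p\}$ is an ideal (using that $P$ is a lattice), so $J_0\subseteq\langle p\rangle$ for all $p$, forcing any $q\in J_0$ (nonempty since $J_0\in\Jo_P$) to satisfy $q\le p$ for all $p$, i.e.\ $q$ is a least element of $P$. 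The only real subtlety throughout is recognizing when distributivity needs to be invoked for down-closedness and for the splitting step in the infinite-join argument; the rest is bookkeeping.
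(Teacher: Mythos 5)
The paper states this proposition without proof, in its preliminaries, deferring to the standard references (Birkhoff, Gr\"atzer); your argument is the textbook one and is correct, with distributivity invoked in exactly the two places where it is needed (down-closedness of the set of finite joins, and the splitting $p=(p\wedge h_{1})\vee\dots\vee(p\wedge h_{n})$ that drives infinite-join distributivity). The only point worth tightening is the meet formula: for two ideals $J\cap H$ is automatically nonempty because $j\wedge h\in J\cap H$, whereas for an infinite family the intersection can genuinely be empty when $P$ has no least element, in which case the meet simply fails to exist in $\Jo_{P}$ itself --- this is the same convention the paper adopts by passing to $\Jo_{P}\cup\left\{\varnothing\right\}$, so it is a presentational caveat rather than a gap.
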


If $P$ has a minimal element $0$, it is then the least element of $P$. We say that $p,q$ in $P$ are \emph{disjoint} (and denote it $p\bot q$), if $p\wedge q=0$. The \emph{pseudo-complement} $p^{*}$ of $p$ is the greatest element of $P$ disjoint with $p$ (if it exists). Consider the most important special case.

\begin{proposition}\label{compl}
If $P$ is a distributive lattice with a minimum $0$ and maximum $1$, and $p,q\in P$ are such that $p\wedge q=0$ and $p\vee q=1$, then $q=p^{*}$ (in this case we will call $q$ the \emph{complement} of $p$). The set $P^{comp}$ of all complemented elements in $P$ is a Boolean algebra, which is a sublattice of $P$, that contains $0,1$. In particular, $\left(p\wedge q\right)^{*}=p^{*}\vee q^{*}$ and $\left(p\vee q\right)^{*}=p^{*}\wedge q^{*}$, for every $p,q\in P^{comp}$.
\end{proposition}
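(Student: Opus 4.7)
The plan is to establish the three assertions in order: uniqueness of complements (so the complement is the pseudo-complement), the De Morgan identities, and then conclude that $P^{comp}$ is a Boolean subalgebra.

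First I would prove that if $p\wedge q=0_{P}$ and $p\vee q=1_{P}$, then $q=p^{*}$. Let $r\in P$ with $p\wedge r=0_{P}$; by distributivity
\[
r=r\wedge 1_{P}=r\wedge(p\vee q)=(r\wedge p)\vee(r\wedge q)=0_{P}\vee(r\wedge q)=r\wedge q,
\]
so $r\le q$, showing $q$ is the greatest element disjoint from $p$. In particular the complement, when it exists, is unique, and the relation is symmetric: $p$ is the complement of $q$, so $(p^{*})^{*}=p$ for every $p\in P^{comp}$. Also $0_{P}$ and $1_{P}$ are mutually complementary, hence lie in $P^{comp}$.

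Next I would verify the De Morgan identities, which simultaneously gives closure of $P^{comp}$ under $\vee$ and $\wedge$. Take $p,q\in P^{comp}$. Then
\[
(p\wedge q)\wedge(p^{*}\vee q^{*})=(p\wedge q\wedge p^{*})\vee(p\wedge q\wedge q^{*})=0_{P}\vee 0_{P}=0_{P},
\]
while two applications of distributivity give
\[
(p\wedge q)\vee p^{*}=(p\vee p^{*})\wedge(q\vee p^{*})=q\vee p^{*},
\]
and therefore $(p\wedge q)\vee(p^{*}\vee q^{*})=q\vee p^{*}\vee q^{*}=1_{P}$. By the first step, $p^{*}\vee q^{*}$ is the complement of $p\wedge q$; in particular $p\wedge q\in P^{comp}$ and $(p\wedge q)^{*}=p^{*}\vee q^{*}$. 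Applying the same argument to $p^{*},q^{*}$ in place of $p,q$ and using involutivity yields $p\vee q\in P^{comp}$ and $(p\vee q)^{*}=p^{*}\wedge q^{*}$.

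Thus $P^{comp}$ is closed under $\wedge,\vee$ and contains $0_{P},1_{P}$, so it is a bounded sublattice of $P$; being a sublattice of a distributive lattice it is itself distributive, and every element has a (unique) complement, so it is a Boolean algebra. The main conceptual step is really only the first one — the uniqueness argument via the distributive law — after which the De Morgan identities and the Boolean-algebra conclusion follow by direct computation; I do not foresee a genuine obstacle.
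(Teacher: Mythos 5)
Your proof is correct: the distributivity computation $r=r\wedge(p\vee q)=r\wedge q$ establishes uniqueness of complements and identifies them with pseudo-complements, and the De Morgan identities then follow by the direct calculations you give, yielding closure of $P^{comp}$ under $\wedge$ and $\vee$. The paper does not include a proof of this proposition (it is quoted as standard material from \cite{birk} and \cite{gr}), and your argument is exactly the standard one found there, so there is nothing to reconcile.
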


\begin{theorem}[Glivenko Theorem]\label{pseudo}
Let $P$ be a complete infinite-join distributive lattice. Then:
\item[(i)] $p^{*}$ exists for every $p\in P$, and the set $P^{*}=\left\{p^{*},~p\in P\right\}=\left\{p\in P,~ p=p^{**}\right\}$ is a complete Boolean algebra, which is also meet-closed in $P$.
\item[(ii)] $p\vee_{P^{*}} q= \left(p^{*}\wedge q^{*}\right)^{*}=\left(p\vee q\right)^{**}$, for every $p,q\in P^{*}$; we also have $\left(p\wedge q\right)^{*}=\left(p^{**}\wedge q\right)^{*}$ and $\left(p\wedge q\right)^{**}=p^{**}\wedge q^{**}$, for every $p,q\in P$.
\item[(iii)] If $r\in P$, then the pseudo-complementation $p\mapsto p^{*r}$ in $\left[0,r\right]$ is given by $p^{*r}=p^{*}\wedge r$ and $p^{**r}=p^{**}\wedge r$.
\end{theorem}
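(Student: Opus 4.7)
The plan is to first establish pointwise existence of pseudo-complements, then derive the key functional identities of (ii), and finally bootstrap these into the Boolean algebra structure on $P^{*}$.

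For (i), given $p \in P$, I take $p^{*} := \bigvee \{q \in P : q \wedge p = 0_{P}\}$, which exists by completeness. Infinite-join distributivity gives $p \wedge p^{*} = \bigvee_{q \wedge p = 0_{P}}(p \wedge q) = 0_{P}$, so $p^{*}$ itself lies in the set and is its maximum. Routine checks give $p \leq p^{**}$ and $p \leq q \Rightarrow q^{*} \leq p^{*}$, whence $p^{***} = p^{*}$; this makes the two descriptions of $P^{*}$ coincide.

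The central step is (ii). I first show $(p \wedge q)^{*} = (p^{**} \wedge q)^{*}$: letting $r := (p \wedge q)^{*}$, we have $r \wedge q \wedge p = 0_{P}$, so $r \wedge q \leq p^{*}$ and therefore $r \wedge q \wedge p^{**} \leq p^{*} \wedge p^{**} = 0_{P}$, giving $r \leq (p^{**} \wedge q)^{*}$; the reverse follows from $p \leq p^{**}$ and antimonotonicity. Iterating on the second argument gives $(p \wedge q)^{*} = (p^{**} \wedge q^{**})^{*}$. Now set $a := p^{**} \wedge q^{**}$; applying $(\cdot)^{*}$ twice to $a \leq p^{**}$ and $a \leq q^{**}$ gives $a^{**} \leq p^{****} \wedge q^{****} = p^{**} \wedge q^{**} = a$ (using $p^{***} = p^{*}$), which with $a \leq a^{**}$ yields $a^{**} = a$. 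Taking $*$ once more in the earlier identity then gives $(p \wedge q)^{**} = (p^{**} \wedge q^{**})^{**} = p^{**} \wedge q^{**}$.

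These identities show $P^{*}$ is closed under arbitrary meets taken in $P$: for $R \subseteq P^{*}$, antimonotonicity applied twice to $\bigwedge_{P} R \leq r$ for each $r \in R$ gives $(\bigwedge_{P} R)^{**} \leq r^{**} = r$, so $(\bigwedge_{P} R)^{**} \leq \bigwedge_{P} R$, and the reverse is automatic. I define joins in $P^{*}$ by $\bigvee_{P^{*}} R := (\bigvee_{P} R)^{**}$, which is easily the least upper bound in $P^{*}$; the elementary identity $(p \vee q)^{*} = p^{*} \wedge q^{*}$ then delivers $p \vee_{P^{*}} q = (p^{*} \wedge q^{*})^{*}$. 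Distributivity of $P^{*}$ is a consequence of (ii): for $a, b, c \in P^{*}$,
\[
a \wedge (b \vee_{P^{*}} c) = a^{**} \wedge (b \vee c)^{**} = (a \wedge (b \vee c))^{**} = ((a \wedge b) \vee (a \wedge c))^{**} = (a \wedge b) \vee_{P^{*}} (a \wedge c).
\]
The top of $P^{*}$ is $1_{P} = 0_{P}^{*}$, and $p^{*}$ complements $p \in P^{*}$ since $p \wedge p^{*} = 0_{P}$ and $p \vee_{P^{*}} p^{*} = (p^{*} \wedge p^{**})^{*} = 0_{P}^{*} = 1_{P}$, so Proposition \ref{compl} endows $P^{*}$ with the Boolean algebra structure. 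For (iii), $p^{*} \wedge r$ is directly checked to be the maximal element of $[0_{P}, r]$ disjoint from $p$ (any such $q$ satisfies $q \leq p^{*}$ and $q \leq r$), and applying the same reasoning to $p^{*r} = p^{*} \wedge r$ — together with the reduction $q \wedge (p^{*} \wedge r) = q \wedge p^{*}$ valid when $q \leq r$ — gives $p^{**r} = p^{**} \wedge r$. The main obstacle is the identity $(p \wedge q)^{**} = p^{**} \wedge q^{**}$: its proof requires both the non-obvious lemma $(p \wedge q)^{*} = (p^{**} \wedge q)^{*}$ and the subtle fact that $(\cdot)^{**}$ acts trivially on $P^{*}$, and these must be chained without circularly presupposing the meet-closedness of $P^{*}$.
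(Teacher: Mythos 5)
Your proof is correct: the construction $p^{*}=\bigvee\{q: q\wedge p=0_{P}\}$, the lemma $(p\wedge q)^{*}=(p^{**}\wedge q)^{*}$, and the bootstrap to $(p\wedge q)^{**}=p^{**}\wedge q^{**}$ and to the complete Boolean structure on $P^{*}$ all check out. The paper itself gives no proof of this statement --- it is quoted as standard material from \cite{birk} and \cite{gr} --- and your argument is essentially the classical Glivenko--Frink one found there.
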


\subsection{Ideals of a vector lattice}

We start with recalling some basic facts and concepts from the vector lattice theory. Let $F$ be a vector lattice, whose positive cone is denoted by $F_{+}$. For any set $G\subset F$ we will denote $G_{+}=G\cap F_{+}$. Note that the underlying lattice of a vector lattice is infinite-join and infinite-meet distributive (see \cite[Theorem 1.8]{ab}). Also, by Riesz Decomposition theorem (\cite[Corollary 15.6]{zl}), $\left[0,e\right]+\left[0,f\right]=\left[0,e+f\right]$ and $\left[-e,e\right]+\left[-f,f\right]=\left[-e-f,e+f\right]$, for any $e,f\in F_{+}$. Recall that $F$ is \emph{Archimedean} if $\bigwedge\frac{1}{n}f=0$, for every $f\in F_{+}$.

A linear map $T:F\to E$ between vector lattices is a \emph{homomorphism} if it preserves the lattice operations  (it is enough to require that it preserves the absolute value $\left|f\right|:f\vee -f$). By a \emph{sublattice} of $F$ we will always mean a linear sublattice, i.e. a vector subspace $E$ of $F$, which is closed with respect to the lattice operations (it is enough to require that it contains the absolute value of any of its elements). If on top of that $0\le f\le e\in E$ implies $f\in E$, we say that $E$ is an \emph{ideal} of $F$. Recall that $G\subset F$ is called \emph{solid} if $f\in F$ and $g\in G$ with $\left|f\right|\le\left|g\right|$ yield $f\in G$. It follows from  Riesz Decomposition theorem that if $G,H\subset F$ are solid then $G+H$ is solid with $\left(G+H\right)_{+}=G_{+}+H_{+}$.

\begin{proposition}\label{id}
For $E\subset F$ the following conditions are equivalent:
\item[(i)] $E$ is an ideal in $F$;
\item[(ii)] $E$ is a sublattice with $e\in E_{+}$, $f\in F_{+}$ $\Rightarrow$ $f\wedge e\in E$;
\item[(iii)] $E_{+}$ is a lattice ideal in $F_{+}$, with $2E_{+}\subset E_{+}$ and $E=\left\{e\in F,~ \left|e\right|\in E_{+}\right\}$;
\item[(iv)] $E$ is solid and $E_{+}+E_{+}\subset E$;
\item[(v)] $\left\{\left(e,f\right)\in F\times F,~ e-f\in E\right\}$ is a sublattice of $F\times F$;
\item[(vi)] $E$ is the kernel of a homomorphism with domain $E$.
\end{proposition}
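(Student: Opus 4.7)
The plan is to close a cycle (i) $\Rightarrow$ (ii) $\Rightarrow$ (iii) $\Rightarrow$ (iv) $\Rightarrow$ (i) and then link (i) with (v) and (vi) separately. The chain (i) $\Rightarrow$ (ii) $\Rightarrow$ (iii) $\Rightarrow$ (iv) is essentially bookkeeping. For (i) $\Rightarrow$ (ii) one notes $f \wedge e \le e \in E$ and $f \wedge e \in F_+$. For (ii) $\Rightarrow$ (iii) I would check that $E_+$ is join-closed (sublattice property) and down-closed in $F_+$ (using $f = f \wedge e \in E$ when $0 \le f \le e \in E$), while $2 E_+ \subset E_+$ holds because $E$ is a vector subspace; the identification $E = \{e \in F,~ |e| \in E_+\}$ comes from applying (ii) to $e_{\pm} \le |e|$ to conclude $e_{\pm} \in E$ and hence $e = e_+ - e_- \in E$. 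For (iii) $\Rightarrow$ (iv) solidity follows from the lattice-ideal structure of $E_+$ together with the $|\cdot|$-characterization, and $E_+ + E_+ \subset E$ comes from $e + f \le 2(e \vee f) \in 2 E_+ \subset E_+$ plus the down-closure of $E_+$.

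The main structural step of the cycle is (iv) $\Rightarrow$ (i), where one must recover the subspace and sublattice properties of $E$ from solidity plus $E_+ + E_+ \subset E$ alone. Solidity immediately yields closure under negation and under scalars of modulus at most one. For addition of $e, f \in E$ I would use $|e + f| \le |e| + |f|$, noting $|e|, |f| \in E_+$ by solidity, hence $|e| + |f| \in E$ by hypothesis, so $e + f \in E$ by solidity again. Iterating yields closure under positive integer multiples, and arbitrary positive scalars $\alpha$ are then handled by dominating $|\alpha e|$ by $n|e|$ for any integer $n \ge \alpha$ and invoking solidity. Once $E$ is a subspace, the identity $e \vee f = \tfrac{1}{2}(e + f + |e - f|)$ combined with $|e - f| \in E$ shows $E$ is a sublattice, and (i) follows directly from solidity.

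For (i) $\Rightarrow$ (v), the set $R = \{(e,f) \in F \times F,~ e - f \in E\}$ is a linear subspace because $E$ is, and the Birkhoff-type inequality $|e_1 \vee e_2 - f_1 \vee f_2| \le |e_1 - f_1| + |e_2 - f_2|$ combined with solidity of $E$ shows $R$ is closed under $\vee$ and $\wedge$. For (v) $\Rightarrow$ (vi), the assumption makes $R$ simultaneously a linear and a lattice congruence on $F$, so the quotient $F/E$ carries a well-defined vector lattice structure and the canonical projection $q \colon F \to F/E$ is a lattice homomorphism with $\ker q = E$. Finally (vi) $\Rightarrow$ (i) is immediate: a homomorphism preserves the lattice operations (so $\ker T$ is a sublattice) and the order (so $0 \le f \le e \in \ker T$ gives $0 \le T f \le T e = 0$, hence $f \in \ker T$). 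I expect the main technical work to sit in (iv) $\Rightarrow$ (i), where the subspace property must be extracted from solidity via a short iteration, together with (v) $\Rightarrow$ (vi), where one has to verify that the lattice operations on equivalence classes are well-defined and produce an antisymmetric ordering on $F/E$; the lattice-congruence content of (v) encodes precisely this compatibility.
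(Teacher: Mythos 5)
Your proposal is correct and follows essentially the same route as the paper: the cycle (i)$\Rightarrow$(ii)$\Rightarrow$(iii)$\Rightarrow$(iv)$\Rightarrow$(i) is identical (including the use of $e+f\le 2\left(e\vee f\right)$ and the domination-by-$n\left|e\right|$ argument for scalar multiples), and your (i)$\Rightarrow$(v) via the Birkhoff inequality is the paper's $\left|\left|e\right|-\left|f\right|\right|\le\left|e-f\right|$ argument in marginally stronger form. The only divergence is how you return from (v): the paper uses the direct computation $\left(e\wedge f,0_{F}\right)=\left(e,0_{F}\right)\wedge\left(f,f\right)$ to get (v)$\Rightarrow$(ii), whereas you go (v)$\Rightarrow$(vi)$\Rightarrow$(i) through the quotient construction --- which the paper also invokes (as a black box) for (v)$\Leftrightarrow$(vi), so nothing genuinely new is required.
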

\begin{proof}
(ii)$\Leftrightarrow$(i)$\Rightarrow$(iii) are easy to see. (v)$\Leftrightarrow$(vi) follows from the general construction of a quotient of an algebraic structure.

(iii)$\Rightarrow$(iv): Since $E_{+}$ is a lattice ideal in $F_{+}$, it follows that $E=\left\{e\in F,~ \left|e\right|\in E_{+}\right\}$ is solid; furthermore if $e,f\in E_{+}$, then $e+f\le 2\left(e\vee f\right)\in E_{+}$, and so $e+f\in E$.\medskip

(iv)$\Rightarrow$(i): Since a solid set is always closed with respect to $\left|\cdot\right|$, we only need to prove that $E$ is a linear subspace. If $e,f\in E$, since $E$ is solid we have $\left|e\right|,\left|f\right|\in E_{+}$, from where $\left|e+f\right|\le \left|e\right|+\left|f\right|\in E_{+}$, and using solidness again, we conclude that $e+f\in E$. If $\lambda\in\R$, take $n\in\N$ such that $\left|\lambda\right|\le n$; we have that $\left|\lambda e\right|=n\left|\frac{\lambda}{n}e\right|\in E_{+}+...+E_{+}$ ($n$ times). Hence, $\left|\lambda e\right|\in E_{+}$, from where $\lambda e\in E$.\medskip

(ii)+(iv)$\Rightarrow$(v): Let $G$ be the set in (v); it is clear that $G$ is a linear subspace of $F\times F$. If $\left(e,f\right)\in G$, then $e-f\in E$, from where $\left|\left|e\right|-\left|f\right|\right|\le\left|e-f\right|\in E$, hence $\left|e\right|-\left|f\right|\in E$, and so $\left(\left|e\right|,\left|f\right|\right)\in G$.\medskip

(v)$\Rightarrow$(ii): First, $E\times\left\{0\right\}=G\cap \left(F\times\left\{0\right\}\right)$ is a sublattice of $F\times F$, hence a sublattice of $F\times\left\{0\right\}$, from where $E$ is a sublattice of $F$. Next, if $e\in E_{+}$, $f\in F_{+}$, then $\left(e,0\right), \left(f,f\right)\in G$, from where $\left(e\wedge f,0\right)=\left(e,0\right)\wedge \left(f,f\right)\in G$, thus $f\wedge e\in E$.
\end{proof}

The intersection of sublattices / solids / ideals is a sublattice / solid / ideal. The intersection of all ideals that contain $G\subset F$ will be denoted by $I\left(G\right)$; we call $G$ \emph{majorizing} if $I\left(G\right)=F$. A \emph{principal ideal} is an ideal of the form $I_{e}:=I\left(\left\{e\right\}\right)=\bigcup\limits_{\alpha\ge 0}\alpha\left[-e,e\right]$ -- the smallest ideal that contains $e\in F_{+}$. According to Krein-Kakutani Theorem (see \cite[Theorem 45.3]{zl}), if $F$ is Archimedean, for every $e$ there is a compact Hausdorff space $K_{e}$ such that $I_{e}$ is vector lattice isomorphic to a norm dense sublattice of $\Co\left(K_{e}\right)$; in this case we will call $K_{e}$ the \emph{Krein-Kakutani spectrum} of $e$. If $F=I_{e}$, we will call $e$ a \emph{strong unit} of $F$. If $I_{e}$ is isomorphic to $\Co\left(K_{e}\right)$, for every $e\in F_{+}$, we call $F$ \emph{uniformly complete}. Note that this condition is equivalent to the fact that every $I_{e}$ is a Banach space with respect to the Minkowski functional $\|\cdot\|_{e}$ of $\left[-e,e\right]$. We will call $L\subset F$ \emph{uniformly closed} if $L\cap I_{e}$ is $\|\cdot\|_{e}$-closed in $I_{e}$, for every $e\in F_{+}$.\medskip

Note that an ideal of a vector lattice is not an ideal of the underlying lattice (unless $F=\left\{0\right\}$). We will denote the set of the vector lattice ideals of $F$ by $\mathcal{I}_{F}$, and the set of the lattice ideals of $F_{+}$ by $\Jo_{F_{+}}$. Clearly, both of these sets are complete lattices with respect to the inclusion, since the intersection of any collection of ideals is an ideal; the maximum and minimum of $\mathcal{I}_{F}$ are $F$ and $\left\{0\right\}$, respectively, while for $\Jo_{F_{+}}$ these roles are played by $F_{+}$ and $\left\{0\right\}$.  Clearly, $E\mapsto E_{+}$ is a map from $\mathcal{I}_{F}$ into $\mathcal{J}_{F_{+}}$, which respects the intersection, and we will now investigate some of its properties.

\begin{proposition}\label{lid}If $E,H\in\Io_{F}$, then $E\vee_{\Io_{F}}H=E+H$ with $E_{+}\vee_{\mathcal{J}_{F_{+}}} H_{+}=E_{+}\vee H_{+}=E_{+}+H_{+}=\left(E+H\right)_{+}$. Moreover, if $\mathcal{J}\subset \mathcal{I}_{F}$, then $$\bigvee\nolimits_{\mathcal{I}_{F}} \mathcal{J}= \bigplus\mathcal{J}:=\left\{f_{1}+...+f_{n},~ f_{k}\in E_{k}\in \mathcal{J}\right\},$$
and $$\left(\bigvee\nolimits_{\mathcal{I}_{F}} \mathcal{J}\right)_{+}=\bigplus\limits_{J\in\mathcal{J}}J_{+}=\bigvee\limits_{J\in\mathcal{J}}J_{+}=\bigvee\nolimits_{\mathcal{J}_{F_{+}}}\left\{J_{+},~J\in\mathcal{J}\right\}.$$
\end{proposition}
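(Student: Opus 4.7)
The plan is to first verify that $E+H$ is an ideal by combining two facts already noted in the preamble: the sum of solid sets is solid with positive cone equal to the sum of the positive cones, and Proposition \ref{id}(iv), which characterizes ideals as solid sets with $E_+ + E_+ \subseteq E$. Since $E+H$ is a vector subspace (hence closed under addition) and contains both $E$ and $H$, while any ideal containing both must contain every sum of an element of $E$ with an element of $H$, this yields $E \vee_{\Io_F} H = E+H$ together with $(E+H)_+ = E_+ + H_+$.

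Next, Proposition \ref{dec}(ii) applied to the distributive lattice $F_+$ gives $E_+ \vee_{\Jo_{F_+}} H_+ = E_+ \vee H_+$, so what remains of the pairwise statement is to establish $E_+ \vee H_+ = E_+ + H_+$. The inclusion $\subseteq$ is routine, since $e \vee h \le e + h$ and $(E+H)_+$ is down-closed in $F_+$. For the reverse inclusion I would use a symmetrization trick: set $c := e \wedge h$, which lies in $E_+ \cap H_+$ by solidness of both ideals. Then $e' := e + c \in E_+$ and $h' := h + c \in H_+$, and the identity $(e \vee h) + (e \wedge h) = e + h$ yields $e' \vee h' = (e \vee h) + c = e + h$, exhibiting $e+h$ as an element of $E_+ \vee H_+$.

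For an arbitrary family $\mathcal{J} \subset \Io_F$, the strategy is to show that $\bigplus \mathcal{J}$ is an ideal, whence $\bigvee_{\Io_F} \mathcal{J} = \bigplus \mathcal{J}$ by minimality. It is manifestly closed under addition, so by Proposition \ref{id}(iv) only solidness needs checking. Given $|f| \le |f_1 + \ldots + f_n| \le |f_1| + \ldots + |f_n|$ with $f_k \in J_k \in \mathcal{J}$, iterated application of the Riesz Decomposition theorem produces $0 \le g_k \le |f_k|$ with $|f| = g_1 + \ldots + g_n$, and solidness of each $J_k$ places $g_k \in J_{k,+}$. A second iteration applied to $f^{\pm} \le |f|$ then writes $f^{\pm}$, and thus $f$, as an element of $\bigplus \mathcal{J}$. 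The same iterated decomposition identifies $(\bigplus \mathcal{J})_+$ with $\bigplus_{J \in \mathcal{J}} J_+$. Finally, Proposition \ref{dec}(iii) applied to $F_+$ gives $\bigvee_{\Jo_{F_+}} \{J_+\} = \bigvee_{J \in \mathcal{J}} J_+$, and an iterated version of the symmetrization trick from the previous paragraph identifies this set with $\bigplus_{J \in \mathcal{J}} J_+$.

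The main obstacle is the reverse inclusion $E_+ + H_+ \subseteq E_+ \vee H_+$: this is not a purely lattice-theoretic consequence of $F_+$ being distributive, but genuinely uses the linear structure of $F$ through the shift by $c = e \wedge h$. Once this trick is secured, all remaining statements follow by bookkeeping and iterated Riesz decomposition.
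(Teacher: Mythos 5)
Your argument is correct in substance but takes a noticeably different route from the paper's. For the two-element case the paper sandwiches $e\vee h\le e+h\le 2\left(e\vee h\right)=\left(2e\right)\vee\left(2h\right)$ and invokes down-closedness of the lattice ideals on both sides, whereas your symmetrization identity $e+h=\left(e+e\wedge h\right)\vee\left(h+e\wedge h\right)$ exhibits $e+h$ \emph{exactly} as a pairwise join, which is a pleasant refinement (and your closing observation that this step genuinely uses the linear structure, not just distributivity of $F_{+}$, is well taken). For arbitrary families the paper simply observes that $\bigvee_{\Io_{F}}\mathcal{J}$ and $\bigplus\mathcal{J}$ are both unions of the directed family of finite sub-joins, while you verify directly, via iterated Riesz decomposition, that $\bigplus\mathcal{J}$ is a solid subspace and hence an ideal; your version is longer but more self-contained. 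The one sub-step I would not accept as written is the claim that ``an iterated version of the symmetrization trick'' shows $\bigvee_{J\in\mathcal{J}}J_{+}=\bigplus_{J\in\mathcal{J}}J_{+}$: if you write $j_{1}+\dots+j_{n-1}=a_{1}\vee\dots\vee a_{m}$ and then shift by $c=\left(a_{1}\vee\dots\vee a_{m}\right)\wedge j_{n}$, the resulting components $a_{i}+c$ land in $J_{\sigma\left(i\right),+}+J_{n,+}$ rather than in a single $J_{+}$, so the induction does not close. This is easily repaired with the mechanism you already used for the other inclusion: either note $j_{1}+\dots+j_{n}\le\left(nj_{1}\right)\vee\dots\vee\left(nj_{n}\right)$ and use that $\bigvee_{\Jo_{F_{+}}}\left\{J_{+}\right\}$ is down-closed by Proposition \ref{dec}(iii), or write $s=\bigvee_{k}\left(s\wedge nj_{k}\right)$ with $s\wedge nj_{k}\in J_{k,+}$ by infinite-join distributivity and solidness.
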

\begin{proof}
First, since $E,H$ are solid, so is $E+H$ and $\left(E+H\right)_{+}=E_{+}+H_{+}$. Hence, according to Proposition \ref{id}, $E+H\in \Io_{F}$, and it is clear that $E\vee_{\Io_{F}}H=E+H$. Next, $E_{+}+H_{+}=\left(E+H\right)_{+}\in \Io_{F_{+}}$ with $e\vee h\le e+h\le 2\left(e\vee h\right)$, for any $e\in E_{+}$ and $h\in H_{+}$, yield $E_{+}\vee_{\mathcal{J}_{F_{+}}} H_{+}=E_{+}\vee H_{+}=E_{+}+H_{+}$.\medskip

It now follows that the claims are true for any finite collection of ideals. The general case now also follows because e.g. $\bigvee_{\mathcal{I}_{F}} \mathcal{J}$ and $\bigplus\mathcal{J}$ are both the union of the directed family of $\bigvee_{\mathcal{I}_{F}} \mathcal{J}'=\bigplus\mathcal{J}'$, over all finite $\mathcal{J}'\subset \mathcal{J}$.
\end{proof}

\begin{corollary}\label{lidd}
The map $E\mapsto E_{+}$ is a lattice homomorphism from $\mathcal{I}_{F}$ into $\mathcal{J}_{F_{+}}$, which preserves arbitrary joins and meets, and has a left inverse. Hence, $\mathcal{I}_{F}$ embeds as a regular sublattice of $\mathcal{J}_{F_{+}}$, and so is infinite-join distributive.
\end{corollary}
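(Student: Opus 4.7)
The plan is to read the corollary off from Proposition \ref{lid} together with two small observations: meets in both $\mathcal{I}_F$ and $\mathcal{J}_{F_+}$ are set-theoretic intersections, and $F_+$ is itself a distributive lattice (being a sublattice of the distributive lattice $F$).

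First I would verify the two homomorphism-type claims. The identity $(\bigcap \mathcal{J})_+ = \bigcap_{J\in \mathcal{J}} J_+$ is immediate from the definition of $E_+$ and gives preservation of arbitrary meets; preservation of arbitrary joins is precisely the last displayed identity of Proposition \ref{lid}. Hence the map $\Phi: E\mapsto E_+$ is a lattice homomorphism preserving arbitrary joins and meets.

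Next I would produce the left inverse. Let $\Psi : \mathcal{J}_{F_+} \to \mathcal{I}_F$ send $J$ to the vector-lattice ideal $I(J)$ it generates. For $E \in \mathcal{I}_F$, $I(E_+)$ is the smallest ideal containing $E_+$; but $E$ itself is such an ideal, and any ideal containing $E_+$ contains $E_+ - E_+ = E$, so $\Psi(E_+) = E$. Thus $\Psi \circ \Phi = \mathrm{id}_{\mathcal{I}_F}$ and in particular $\Phi$ is injective.

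For the remaining assertions, injectivity of $\Phi$ together with preservation of arbitrary joins and meets already gives that $\Phi(\mathcal{I}_F)$ is a regular sublattice of $\mathcal{J}_{F_+}$: sups and infs computed in $\Phi(\mathcal{I}_F)$ (transported by $\Phi$ from those in $\mathcal{I}_F$) coincide with those computed ambiently in $\mathcal{J}_{F_+}$. By Proposition \ref{dec}(i), $\mathcal{J}_{F_+}$ is infinite-join distributive; given $E \in \mathcal{I}_F$ and $\mathcal{J} \subset \mathcal{I}_F$, applying $\Phi$ to the desired identity $E \wedge \bigvee \mathcal{J} = \bigvee \{E \wedge J : J \in \mathcal{J}\}$ pushes both sides into $\mathcal{J}_{F_+}$ (using preservation of the outer meet and the arbitrary join), where it holds by infinite-join distributivity; injectivity of $\Phi$ transfers the identity back. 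The whole argument is essentially a repackaging of Proposition \ref{lid}; the only subtle point worth flagging is that this last step needs preservation of \emph{arbitrary} joins by $\Phi$, not merely finite ones.
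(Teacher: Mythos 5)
Your proposal is correct and follows essentially the intended route: the paper gives no explicit proof, but the corollary is meant to be read off exactly as you do, with join preservation coming from the displayed identity in Proposition \ref{lid}, meet preservation from the fact that meets in both lattices are intersections, the left inverse from $E=E_{+}-E_{+}$ (equivalently Proposition \ref{id}(iii)), and infinite-join distributivity transported back from $\mathcal{J}_{F_{+}}$ via Proposition \ref{dec}(i). Your closing remark that the transfer of distributivity genuinely needs preservation of \emph{arbitrary} joins is exactly the right point to flag.
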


Recall that $e,f\in F$ are called \emph{disjoint} (denoted $e\bot f$) if $\left|e\right|\wedge \left|f\right|=0$. Note that disjoint $e,f$ are not disjoint in the underlying lattice of $F$ (unless $F=\left\{0\right\}$), since the latter has no minimal element. However, for $e,f\in F_{+}$ vector-lattice-disjointness in $F$ is equivalent to lattice-disjointness in $F_{+}$. We also have the following relation between the two concepts.

\begin{proposition}\label{prin}
\item[(i)] The correspondence $f\mapsto I_{f}$ is a homomorphism from $F_{+}$ into $\mathcal{I}_{F}$. In particular, it preserves disjointness.
\item[(ii)] If $G,H\subset F_{+}$, then $I\left(G\right)\cap I\left(H\right)=I\left(\left\{g\wedge h,~ g\in G,~ h\in H\right\}\right)$.
\end{proposition}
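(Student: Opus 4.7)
For part (i), I need to check that $f\mapsto I_{f}$ preserves the lattice operations, where in $\mathcal{I}_{F}$ the join is the Minkowski sum and the meet is the intersection (by Proposition \ref{lid}). For joins, since $f,g\le f\vee g\le f+g$, the inclusions $I_{f},I_{g}\subset I_{f\vee g}$ combined with the fact that $I_{f}+I_{g}\in\mathcal{I}_{F}$ contains $f\vee g$ yield $I_{f\vee g}=I_{f}+I_{g}$. For meets, the inclusion $I_{f\wedge g}\subset I_{f}\cap I_{g}$ is immediate from $f\wedge g\le f,g$. The reverse inclusion is the only substantive step: given $h\in \left(I_{f}\cap I_{g}\right)_{+}$, pick $\alpha,\beta\ge 0$ with $h\le\alpha f$ and $h\le \beta g$, set $\gamma:=\alpha\vee\beta$, and observe that $h\le \gamma f\wedge \gamma g=\gamma\left(f\wedge g\right)$, which places $h$ in $I_{f\wedge g}$. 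Preservation of disjointness is then a formal consequence: $f\wedge g=0_{F}$ gives $I_{f}\cap I_{g}=I_{0_{F}}=\left\{0_{F}\right\}$.

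For part (ii), I would reduce the assertion to an application of infinite-join distributivity of $\mathcal{I}_{F}$ (granted by Corollary \ref{lidd}) combined with part (i). By the description of generated ideals and Proposition \ref{lid}, I can write
\[
I\left(G\right)=\bigvee\nolimits_{\mathcal{I}_{F}}\left\{I_{g},~ g\in G\right\}\quad\text{and}\quad I\left(H\right)=\bigvee\nolimits_{\mathcal{I}_{F}}\left\{I_{h},~ h\in H\right\}.
\]
Infinite-join distributivity then gives
\[
I\left(G\right)\cap I\left(H\right)=\bigvee\nolimits_{g\in G,~ h\in H}\left(I_{g}\cap I_{h}\right)=\bigvee\nolimits_{g\in G,~ h\in H} I_{g\wedge h}=I\left(\left\{g\wedge h,~ g\in G,~ h\in H\right\}\right),
\]
where the middle equality uses part (i) and the last equality is again Proposition \ref{lid}.

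The main obstacle is the meet step of (i): specifically, the trick of using $\gamma=\alpha\vee\beta$ to majorize $h$ by a scalar multiple of $f\wedge g$, which is the one place where a genuine lattice identity (rather than a purely formal manipulation) enters. Once this is in hand, (ii) is essentially a bookkeeping consequence of infinite-join distributivity, which the earlier sections have already set up.
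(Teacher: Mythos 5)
Your proposal is correct and follows essentially the same route as the paper: the paper's proof of (i) simply cites the identities $I_{e\vee f}=I_{e}+I_{f}$ and $I_{e\wedge f}=I_{e}\cap I_{f}$ as "easily established," and your argument (including the $\gamma=\alpha\vee\beta$ step for the meet) is precisely the verification being elided. Part (ii) is also identical to the paper's: infinite-join distributivity of $\mathcal{I}_{F}$, the description $I\left(G\right)=\bigvee_{\mathcal{I}_{F}}\left\{I_{g},~g\in G\right\}$, and part (i).
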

\begin{proof}
(i) follows from the easily established identities $I_{e\vee f}=I_{e}+I_{f}$ and $I_{e\wedge f}=I_{e}\cap I_{f}$. (ii) follows from infinite-join distributivity of $\mathcal{I}_{F}$, the fact that $I\left(G\right)=\bigvee_{\Io_{F}}\left\{I_{g}, g\in G\right\}$, and (i).
\end{proof}

We will discuss infinite-meet distributivity of $\mathcal{I}_{F}$ later in the article. An ideal of a vector lattice which contains the supremum of any of its subsets is called a \emph{band}. Intersection of any collection of bands is a band, and so the set $\Bo_{F}$ of all bands of $F$ is a complete lattice with respect to the inclusion. For any $G\subset F$ its disjoint complement $G^{d}$ is a band, and conversely, if $F$ is Archimedean, then $E\subset F$ is a band if and only if $E=E^{dd}$. In fact, $G^{dd}$ is the band generated by $G\subset F$ (see \cite[Theorem 1.39]{ab}). If $E\subset F$ is a sublattice and $G\subset E$, then $G^{d}_{E}$ denotes the disjoint complement of $G$ with respect to the vector lattice structure of $E$, i.e. $G^{d}_{E}=G^{d}\cap E$. Note that $G^{dd}_{E}:=\left(G^{d}_{E}\right)^{d}_{E}$ is not necessarily equal to $G^{dd}\cap E$ (see \cite[Proposition 2.3 and Corollary 3.14]{erz} for more details). It turns out that the disjoint complement is the pseudo-complementation in the lattice $\Io_{F}$.

\begin{corollary}\label{idis}For $E,H\in \mathcal{I}_{F}$ the following conditions are equivalent:
\item[(i)] $E,H$ are disjoint as elements of $\mathcal{I}_{F}$;
\item[(ii)] $E\cap H=\left\{0\right\}$;
\item[(iii)] $e\bot h$, for all $e\in E$, $h\in H$;
\item[(iv)] $e\wedge h=0$, for all $e\in E_{+}$, $h\in H_{+}$.\medskip

Furthermore, for every $H\in \mathcal{I}_{F}$ we have $H^{d}=H^{*}_{\mathcal{I}_{F}}$.
\end{corollary}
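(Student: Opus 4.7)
My plan is to establish the four-way equivalence by a short cycle, and then read off the pseudo-complement assertion as an immediate consequence. The key underlying observation is the one already recorded in the paragraphs preceding the corollary: meets in $\mathcal{I}_{F}$ are just intersections and the least element of $\mathcal{I}_{F}$ is $\left\{0_{F}\right\}$. So disjointness in the lattice $\mathcal{I}_{F}$ is, by definition, triviality of the intersection, which gives (i) $\Leftrightarrow$ (ii) for free.

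For (ii) $\Rightarrow$ (iii) I would fix $e\in E$ and $h\in H$, use solidity of ideals (Proposition \ref{id}) to get $\left|e\right|\in E_{+}$ and $\left|h\right|\in H_{+}$, and apply Proposition \ref{id}(ii) once inside $E$ and once inside $H$ to place $\left|e\right|\wedge\left|h\right|$ in both $E$ and $H$. Then $\left|e\right|\wedge\left|h\right|\in E\cap H=\left\{0_{F}\right\}$, which is precisely $e\bot h$. The converse (iii) $\Rightarrow$ (ii) is a one-liner: any $f\in E\cap H$ satisfies $f\bot f$, forcing $\left|f\right|=0_{F}$. The equivalence (iii) $\Leftrightarrow$ (iv) is pure solidity: given (iv), apply it to $\left|e\right|\in E_{+}$ and $\left|h\right|\in H_{+}$ to recover (iii), and the reverse is obvious since $E_{+}\subset E$, $H_{+}\subset H$.

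For the final sentence I would invoke that $H^{d}$ is itself an ideal (indeed a band), whose very definition means that (iii) holds with $E=H^{d}$; hence by (iii) $\Rightarrow$ (i), $H^{d}$ and $H$ are disjoint in $\mathcal{I}_{F}$. Conversely, if $E\in\mathcal{I}_{F}$ is disjoint from $H$ in the lattice sense, (i) $\Rightarrow$ (iii) forces every $e\in E$ to satisfy $e\bot h$ for all $h\in H$, i.e.\ $E\subseteq H^{d}$. Thus $H^{d}$ is the greatest ideal disjoint from $H$, which is by definition $H^{*}_{\mathcal{I}_{F}}$. I do not foresee a genuine obstacle: the argument is essentially bookkeeping, and the only point requiring care is not to conflate lattice-disjointness in $\mathcal{I}_{F}$ (trivial meet) with vector-lattice disjointness in $F$ (trivial $\left|\cdot\right|\wedge\left|\cdot\right|$) until the equivalence (i) $\Leftrightarrow$ (iii) has been recorded.
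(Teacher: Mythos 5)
Your proposal is correct and follows essentially the same route as the paper: (i)$\Leftrightarrow$(ii) via $\wedge_{\mathcal{I}_{F}}=\cap$, the absolute-value/solidity argument to pass between general and positive elements, the self-disjointness observation $f\bot f\Rightarrow f=0_{F}$ for the return to (ii), and the identification $H^{d}=H^{*}_{\mathcal{I}_{F}}$ read off from the equivalences. The only difference is the bookkeeping order of the implications in the cycle (the paper runs (ii)$\Rightarrow$(iv)$\Rightarrow$(iii)$\Rightarrow$(ii)), which is immaterial.
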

\begin{proof}
(i)$\Leftrightarrow$(ii) follows from the fact that $\wedge_{\mathcal{I}_{F}}=\cap$.

If $E\cap H=\left\{0\right\}$ and $e\in E_{+}$, $h\in H_{+}$, then $e\wedge h\in E\cap H$, from where $e\wedge h=0$. If $e\wedge h=0$, for all $e\in E_{+}$, $h\in H_{+}$, and $e\in E$, $h\in H$, then $\left|e\right|\in E_{+}$, $\left|h\right|\in H_{+}$, and so $\left|e\right|\wedge \left|h\right|=0$, hence $e\bot h$. Finally, if $e\bot h$, for all $e\in E$, $h\in H$, and $g\in E\cap H$, then $g\bot g$, and so $g=0$. The last claim follows from the equivalences.
\end{proof}

An ideal $H\subset F$ is called a \emph{projection band} if $F=H+H^{d}$ (and so there is a projection $P:F\to H$, which is a homomorphism). It is well-known (see \cite[Theorem 1.41]{ab}) that $H$ is a projection band if and only if $\bigvee\left(H\cap\left[0,f\right]\right)$ exists in $H$, for every $f\in F_{+}$. It is easy to see that every projection band is in fact a band, and $F$ is said to have the \emph{projection property (PP)} if the converse is also true. We will also say that $F$ has the \emph{principal projection property (PPP)} if $\left\{e\right\}^{dd}$ is a projection band, for every $e\in F$. This is equivalent to the fact that $\left\{e\right\}^{d}$ is a projection band, for every $e\in F$. Similarly, we will say that $F$ has $\sigma$\emph{-PP} if every countably generated band is a projection band (which is equivalent to the fact that the disjoint complement of any sequence is a projection band).

\begin{corollary}\label{com}
If $F$ is Archimedean, then bands are the pseudo-complemented elements of $\mathcal{I}_{F}$; they form a Boolean algebra, which is a meet-closed subset of $\mathcal{I}_{F}$. Projection bands are the complemented elements of $\mathcal{I}_{F}$; they form a Boolean algebra, which is a sublattice of $\mathcal{I}_{F}$.
\end{corollary}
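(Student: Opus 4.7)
The strategy is to read this off from Glivenko's theorem (Theorem \ref{pseudo}) and Proposition \ref{compl} applied to the lattice $\mathcal{I}_{F}$. The groundwork has already been laid: by Corollary \ref{lidd}, $\mathcal{I}_{F}$ is a complete infinite-join distributive lattice, with minimum $\left\{0_{F}\right\}$ and maximum $F$; by Corollary \ref{idis}, the pseudo-complement of any $H\in\mathcal{I}_{F}$ is precisely its disjoint complement $H^{d}$.

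For the band claim, I use the Archimedean characterization $H$ is a band iff $H=H^{dd}$, which via Corollary \ref{idis} becomes $H=H^{**}$. By Glivenko's theorem the set $\mathcal{I}_{F}^{*}=\{H\in\mathcal{I}_{F}:\,H=H^{**}\}$ coincides with $\{H^{*}:\,H\in\mathcal{I}_{F}\}$, is a complete Boolean algebra, and is meet-closed in $\mathcal{I}_{F}$. This is exactly the statement for bands.

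For the projection-band claim, first note that $H\cap H^{d}=\{0_{F}\}$ always holds (Corollary \ref{idis}), and by Proposition \ref{lid} one has $H\vee_{\mathcal{I}_{F}}H^{d}=H+H^{d}$. Therefore $H$ is a projection band iff $H$ and $H^{d}$ are complementary in $\mathcal{I}_{F}$, so every projection band is a complemented element. Conversely, if $H$ is complemented by some $J\in\mathcal{I}_{F}$, then since $\mathcal{I}_{F}$ is distributive with $0$ and $1$, Proposition \ref{compl} forces $J=H^{*}=H^{d}$, and then $F=H+H^{d}$ makes $H$ a projection band. The remaining part of Proposition \ref{compl} asserts that the complemented elements form a Boolean algebra that is a sublattice of $\mathcal{I}_{F}$, finishing the corollary.

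There is no real obstacle here; the corollary is essentially a translation of the abstract lattice-theoretic results cited above through the identification $H^{d}=H^{*}$. The only point that requires a moment's attention is making sure the Archimedean hypothesis is invoked exactly where needed, namely to identify bands with the ``regular'' elements $H=H^{**}$ of $\mathcal{I}_{F}$; every other step is purely lattice-theoretic.
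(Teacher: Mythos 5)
Your proof is correct and is exactly the argument the paper intends: the corollary is stated without proof precisely because it is the combination of Corollary \ref{lidd} (completeness and infinite-join distributivity of $\mathcal{I}_{F}$), Corollary \ref{idis} (the identification $H^{d}=H^{*}_{\mathcal{I}_{F}}$), Glivenko's Theorem for the band claim via the Archimedean fact $H$ is a band iff $H=H^{dd}$, and Proposition \ref{compl} together with $H\vee_{\mathcal{I}_{F}}H^{d}=H+H^{d}$ for the projection-band claim. Your remark that the Archimedean hypothesis enters only in identifying bands with the elements satisfying $H=H^{**}$ is also accurate.
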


\begin{remark}
It is also well-known that $H\in \mathcal{I}_{F}$ is prime if and only if it is join-irreducible element of $\mathcal{I}_{F}$ (in fact, in \cite{zl} it is taken as the definition). For a recent study of the properties of prime ideals of a vector lattice see e.g. \cite{kr}. Note that an ideal is maximal if and only if it is simultaneously prime and uniformly closed. Since both maximality and primeness are ``horizontal'' properties, it is natural to ask whether the same is true about uniform closeness, i.e. is it possible to find an intrinsic description of the subset of $\mathcal{I}_{F}$, which consists of uniformly closed ideals. Since these are precisely the ideals which produce Archimedean quotients (see \cite[Theorem 60.2]{zl}), this question is equivalent to the following:
\end{remark}

\begin{question}
Is it possible to ascertain whether $F$ is Archimedean in terms of $\Io_{F}$?
\end{question}

\begin{corollary}\label{rel}
\item[(i)] If $E$ and $J$ are ideals in $F$, then $\left(E\cap J\right)^{d}_{E}=E\cap J^{d}$, $\left(E\cap J\right)^{dd}_{E}=E\cap J^{dd}$ and $E^{dd}\cap J^{dd}=\left(E\cap J\right)^{dd}$.
\item[(ii)] If $G,H\subset F_{+}$, then $G^{dd}\cap I\left(H\right)=\left\{g\wedge h,~ g\in G,~ h\in H\right\}^{dd}_{I\left(H\right)}$, and\linebreak $G^{dd}\cap H^{dd}=\left\{g\wedge h,~ g\in G,~ h\in H\right\}^{dd}$. Also, $\left\{g\wedge h,~ g\in G,~ h\in H\right\}^{d}=\left(G^{dd}\cap H^{dd}\right)^{d}=G^{d}\vee_{\Bo_{F}} H^{d}$.
\end{corollary}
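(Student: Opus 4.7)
The plan is to reduce every equality to the combination of three facts that have just been established: Glivenko's theorem (Theorem \ref{pseudo}) applied to the lattice $\mathcal{I}_F$, which is complete and infinite-join distributive by Corollary \ref{lidd}; the identification $H^d = H^{*}_{\mathcal{I}_F}$ from Corollary \ref{idis}; and the formula $I(G) \cap I(H) = I(\{g \wedge h,~ g \in G,~ h \in H\})$ from Proposition \ref{prin}(ii). The only bit that is not purely formal is the first identity $(E\cap J)^d_E = E\cap J^d$ in part (i), so I would handle it by hand. The inclusion $\supseteq$ is immediate since $E\cap J\subset J$. For $\subseteq$, given $e\in (E\cap J)^d\cap E$ and $j\in J_{+}$, the element $|e|\wedge j$ lies in both $J$ (bounded by $j$) and $E$ (bounded by $|e|\in E$), hence in $E\cap J$, so $|e|\wedge j = |e|\wedge(|e|\wedge j) = 0_F$, giving $e\in J^d$. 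The second identity $(E\cap J)^{dd}_E = E\cap J^{dd}$ is then obtained by applying the first twice, and the third identity $E^{dd}\cap J^{dd} = (E\cap J)^{dd}$ is exactly the formula $(p\wedge q)^{**}=p^{**}\wedge q^{**}$ of Theorem \ref{pseudo}(ii) applied inside $\mathcal{I}_F$.

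For part (ii), I would use the elementary observation that $I(G)^{dd} = G^{dd}$ for any $G\subset F_+$, together with Proposition \ref{prin}(ii). Taking $E=I(H)$, $J=I(G)$ in the second identity of part (i) and noting that $I(\{g\wedge h\})^{dd}_{I(H)} = \{g\wedge h\}^{dd}_{I(H)}$ produces the first claim. Taking $E=I(G)$, $J=I(H)$ in the third identity of part (i) yields $G^{dd}\cap H^{dd} = (I(G)\cap I(H))^{dd} = \{g\wedge h\}^{dd}$, the second claim. For the last line, $\{g\wedge h\}^d = \{g\wedge h\}^{ddd} = (G^{dd}\cap H^{dd})^d$ by the preceding identity together with the general equality $A^d = A^{ddd}$; and $(G^{dd}\cap H^{dd})^d = G^d\vee_{\mathcal{B}_F} H^d$ is the formula $p\vee_{P^*} q = (p^*\wedge q^*)^*$ of Theorem \ref{pseudo}(ii), applied to $p=G^d$ and $q=H^d$, which lie in the Boolean algebra $\mathcal{B}_F$ of bands.

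The main thing to be careful about is keeping track of whether a disjoint complement is taken in $F$ or inside a sublattice $E$: the notation $G^d_E$ always means $G^d\cap E$, but $G^{dd}_E := (G^d_E)^d_E$ need not equal $G^{dd}\cap E$ in general, as the text reminds us via the reference to \cite{erz}. In the present setting this is exactly the point of the second identity of (i): when $G$ itself has the special form $E\cap J$, the two notions do coincide. Once this subtlety is respected, the rest of the argument is an exercise in translating between the vector-lattice vocabulary of disjoint complements and the lattice-theoretic vocabulary of pseudo-complements in $\mathcal{I}_F$.
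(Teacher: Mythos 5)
Your proposal is correct and follows essentially the same route as the paper, whose entire proof is the two-line pointer ``(i) follows from Glivenko Theorem; (ii) follows from combining (i) with part (ii) of Proposition \ref{prin}'' --- you simply fill in the details. The one identity you do ``by hand,'' $\left(E\cap J\right)^{d}_{E}=E\cap J^{d}$, is in fact also formal (for $q\le E$ one has $q\wedge\left(J\cap E\right)=\left\{0_{F}\right\}$ iff $q\wedge J=\left\{0_{F}\right\}$, so the relative pseudo-complement of $J\wedge E$ in $\left[\left\{0_{F}\right\},E\right]$ is $J^{*}\wedge E$, matching Theorem \ref{pseudo}(iii)), but your element-level verification is a perfectly good substitute.
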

\begin{proof}
(i) follows from Glivenko Theorem. (ii) follows from combining (i) with part (ii) of Proposition \ref{prin}.
\end{proof}

It follows that $F$ has PP if and only if in $\mathcal{I}_{F}$ pseudo-complements are complemented. PPP ($\sigma$-PP) is equivalent to the fact that pseudo-complements of the principal (countably generated) ideals are complemented. It can be easily deduced from Corollary \ref{rel} that all these properties are inherited by ideals.

Recall that a sublattice $E\subset F$ is called \emph{order dense} if $E\cap \left(0,f\right]\ne\varnothing$, for every $f\in F_{+}\backslash \left\{0\right\}$. If $F$ is Archimedean, this condition is equivalent to the fact that $f=\bigvee\left(E\cap \left[0,f\right]\right)$, for every $f\in F_{+}$ (see \cite[Theorem 1.34]{ab}).

\section{Sublattices of $\Co\left(X\right)$}\label{s}

Everywhere in this section $X$ is a Tychonoff topological space. We denote the space of all continuous functions on $X$ by $\Co\left(X\right)$. This space is a vector lattice with respect to the pointwise operations, and so it is a sublattice of the vector lattice $\Fo\left(X\right)$ of all real-valued functions on $X$. It is easy to see that these lattices are Archimedean. The interval $\left[f,g\right]$, where $f,g\in \Fo\left(X\right)$ will be meant in $\Co\left(X\right)$ as a sublattice of $\Fo\left(X\right)$, i.e. all continuous functions between (not necessarily continuous) $f$ and $g$.

We will denote the function which is identically $0$ on $X$ by $\0$, while $\1_{A}$ is the indicator of $A\subset X$, i.e. the function whose value is $1$ on $A$ and $0$ on $X\backslash A$; we also put $\1=\1_{X}$.

The space $\Co\left(X\right)$ is by default endowed with the compact-open topology. Recall that a sublattice $E\subset \Co\left(X\right)$ is dense if and only if it \emph{strictly separates points}, i.e. if for any distinct $x,y\in X$ there is $f\in E$ such that $f\left(x\right)=1$ and $f\left(y\right)=0$ (see e.g. \cite{et2}). Let $\Co_{b}\left(X\right)$ stand for the Banach lattice of all bounded continuous functions on $X$ endowed with the supremum norm $\|\cdot\|$. If $f:X\to\R$ and $A\subset X$, define $\|f\|_{A}=\bigvee\limits_{x\in A}\left|f\left(x\right)\right|$.

\subsection{Urysohn lemmas}

By definition of the Tychonoff space, for every open $U\subset X$ and $x\in U$, there is $f\in \left[\1_{\left\{x\right\}},\1_{U}\right]$. Moreover, if $K\subset U$ is compact, or if $X$ is normal and $K$ is closed, Tietze-Urysohn theorem guarantees that there is $f\in \left[\1_{K},\1_{U}\right]$ (see \cite[theorems 2.1.8 and 3.1.7]{engelking}). Also, if $K=U$, then $\1_{K}\in\Co_{b}\left(X\right)$.

\begin{proposition}[Norm-dense Urysohn lemma]\label{dul}
Let $E$ be a dense sublattice of $\Co_{b}\left(X\right)$, let $U$ be an open neighborhood of a compact set $K$ and let $f\in E_{+}$. Then, there is $e\in E\cap \left[\0,f\right]$ which coincides with $f$ on $K$ and vanishes outside of $U$ (or vice versa). If additionally $X$ is normal, or $K=U$, then $K$ can be assumed to be merely closed.
\end{proposition}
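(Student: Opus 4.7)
The plan is to construct, inside $E$, a Urysohn-type auxiliary element $\phi\ge\0$ that vanishes \emph{exactly} outside $U$ and is bounded below by a positive constant on $K$, and then to obtain the desired function as $e:=f\wedge(M\phi)$ for a suitable scalar $M$. Since $[-f,f]$ is empty unless $f\ge\0$, I assume $f\in E_{+}$ throughout. The Tietze--Urysohn theorem---applicable because $K$ is compact, or because $X$ is normal with $K$ closed, or trivially when $K=U$ (so that $K$ is clopen and $\1_{K}$ itself is continuous)---supplies a continuous $g\in[\1_{K},\1_{U}]$.

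To produce $\phi$, fix $\epsilon=1/8$ and use norm-density of $E$ in $\Co_{b}(X)$ to pick $e_{0},\psi\in E$ with $\|e_{0}-g\|<\epsilon$ and $\|\psi-\1\|<\epsilon$. Define
\[
\phi:=(e_{0}-2\epsilon\psi)\vee\0\;\in\;E,
\]
which lies in $E$ because $E$ is a sublattice. A pointwise estimate gives $e_{0}-2\epsilon\psi\ge(1-\epsilon)-2\epsilon(1+\epsilon)\ge 1/2$ on $K$ and $e_{0}-2\epsilon\psi\le\epsilon-2\epsilon(1-\epsilon)<0$ on $X\setminus U$; hence $\phi\ge 1/2$ on $K$ while $\phi\equiv\0$ on $X\setminus U$.

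Now put $M:=2\|f\|$ and $e:=f\wedge(M\phi)\in E$. Then $\0\le e\le f$, so certainly $e\in[-f,f]$. On $K$ we have $M\phi\ge\|f\|\ge f$, whence $e=f$; on $X\setminus U$ we have $M\phi=\0$, whence $e=\0$. This settles the main assertion. The ``vice versa'' variant follows by passing to $f-e$, which lies in $[\0,f]\subset[-f,f]$ and equals $\0$ on $K$ and $f$ on $X\setminus U$.

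The delicate point is that norm approximation alone only yields approximate equalities, while the statement demands \emph{exact} agreement on $K$ and outside $U$. This is handled by a two-sided margin trick: the subtraction of a small multiple $2\epsilon\psi$ of an approximate unit creates a strictly negative buffer on $X\setminus U$ that is collapsed to $\0$ by the truncation $\vee\0$, while the multiplication by $M=2\|f\|$ raises $M\phi$ above $\|f\|$ on $K$, so that the subsequent meet $\wedge f$ returns $f$ exactly.
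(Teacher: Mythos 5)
Your proof is correct and follows essentially the same route as the paper: approximate a Urysohn separating function for $K$ and $X\setminus U$ in the supremum norm by elements of $E$, subtract a small buffer and truncate with $\vee\,\0$ to force exact vanishing outside $U$, then take a meet with (a multiple dominating) $f$ to get exact agreement on $K$. The only cosmetic difference is that the paper builds the buffer from a second approximant $g''$ and forms $(g'-g'')^{+}$ after normalizing $\0\le f\le\1$, whereas you use an approximate unit $\psi$ and the scalar $M=2\|f\|$.
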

\begin{proof}
Without loss of generality we may assume that $\0\le f\le \1$. There is $h\in \Co_{b}\left(X\right)$ which vanishes outside $U$ and equals $3$ on $K$. Since $E$ is dense, there is $g'\in E$ such that $\|h-g'\|\le 1$. Then, $\left.g'\right|_{K}\ge 2$ and $\left.g'\right|_{K\backslash U}\le 1$. Similarly, there is $g''\in E$ such that $\left.g''\right|_{K}\le 1$ and $\left.g''\right|_{K\backslash U}\ge 2$. Take $g=\left(g'-g''\right)^{+}$; we have $\left.g\right|_{K}\ge 1$ and $\left.g\right|_{K\backslash U}\equiv 0$, and so $e=f\wedge g$ satisfies the requirements.
\end{proof}

Since the set of the restrictions of elements of a dense sublattice of $\Co\left(X\right)$ to a compact set $K\subset X$ forms a dense sublattice of $\Co\left(K\right)=\Co_{b}\left(K\right)$, we get the following result.

\begin{corollary}\label{ssul}
Let $E$ be a dense sublattice of $\Co\left(X\right)$, let $K,L\subset X$ be compact and disjoint, and let $f\in E$. Then, there is $e\in E$ which coincides with $f$ on $K$ and vanishes on $L$.
\end{corollary}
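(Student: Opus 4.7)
The plan is to reduce to the compact (hence normal) case on $M:=K\cup L$ and then invoke the normal version of Proposition \ref{dul}. The paragraph immediately preceding the statement records that the restriction map $e\mapsto e|_{M}$ sends the dense sublattice $E\subset \Co\left(X\right)$ to a dense sublattice $E|_{M}$ of $\Co\left(M\right)=\Co_{b}\left(M\right)$, so the hypotheses of Proposition \ref{dul} transfer to the ambient space $M$.

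Concretely, I would proceed as follows. First, form $M=K\cup L$, which is compact and therefore normal as a Hausdorff space. Since $L$ is compact and disjoint from $K$, it is closed in $M$, so $U:=M\setminus L$ is an open neighborhood of the closed set $K$ in $M$. Now apply the ``$X$ normal'' clause of Proposition \ref{dul} to the dense sublattice $E|_{M}\subset \Co_{b}\left(M\right)$, the closed set $K\subset M$, the neighborhood $U$, and the function $f|_{M}\in E|_{M}$; this yields some $\tilde{e}\in E|_{M}$ that agrees with $f|_{M}$ on $K$ and vanishes outside $U$, and in particular vanishes on $L$. Finally, pick any $e\in E$ with $e|_{M}=\tilde{e}$; then $e$ agrees with $f$ on $K$ and vanishes on $L$, as required.

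There is no real obstacle here beyond correctly setting up the ambient space: once we pass from the possibly non-normal Tychonoff space $X$ to the compact Hausdorff space $M=K\cup L$, all three hypotheses of Proposition \ref{dul} (normality, compact/closed target set, open neighborhood) are automatic. Note that the interval constraint $\tilde{e}\in \left[-f|_{M},f|_{M}\right]$ furnished by Proposition \ref{dul} is not needed for the conclusion here, so there is no need to preserve it after lifting back to $X$.
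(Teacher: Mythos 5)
Your proposal is correct and matches the paper's intended argument: the sentence preceding the corollary indicates exactly this reduction, namely restricting $E$ to the compact set $M=K\cup L$ to obtain a dense sublattice of $\Co\left(M\right)=\Co_{b}\left(M\right)$ and then applying Proposition \ref{dul} (with $K$ compact and $U=M\setminus L$) before lifting back to $E$. The only cosmetic difference is that you invoke the normality clause of Proposition \ref{dul}, whereas the basic compact-set clause already suffices since $K$ is compact in $M$.
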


Let us consider a similar characterization for order dense sublattices of $\Co\left(X\right)$. It was proven in \cite[Proposition 5.3]{erz} that $E$ is order dense if and only if $E\cap\left(\0,\1_{U}\right]\ne \varnothing$, for every nonempty open $U\subset X$. Note that $\Co_{b}\left(X\right)$ is order dense in $\Co\left(X\right)$.

\begin{proposition}[Order dense Urysohn lemma]\label{od}Let $E$ be a sublattice of $\Co\left(X\right)$. Then:
\item[(i)] If $E$ is order dense, then for every $f\in E_{+}$ and every nonempty open $U$ there is an nonempty open $V\subset U$ and $e\in E\cap \left[\0,f\right]$ which vanishes outside of $U$ and coincides with $f$ on $V$.
\item[(ii)] If $\1\in E$, then $E$ is order dense if and only if for every nonempty open $U$ there is an nonempty open $V\subset U$ such that $E\cap \left[\1_{\overline{V}},\1_{U}\right]\ne\varnothing$.
\end{proposition}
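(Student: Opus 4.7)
The plan is to prove (i) by explicit construction leveraging the order-density criterion \cite[Proposition 5.3]{erz} recalled just before the statement, and then deduce (ii) directly from (i).

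For (i), by \cite[Proposition 5.3]{erz}, order density of $E$ yields $g\in E$ with $\0<g\le\1_{U}$; in particular $g$ vanishes off $U$ and $g(x_{0})>0$ for some $x_{0}\in U$. I would then set
$$e_{n}:=(f^{+}\wedge ng)-(f^{-}\wedge ng)\in E,\quad n\in\N.$$
Pointwise disjointness of $f^{+}$ and $f^{-}$ forces $|e_{n}|=(f^{+}\wedge ng)\vee(f^{-}\wedge ng)$, giving at once $|e_{n}|\le ng$ (so $e_{n}$ vanishes off $U$) and $|e_{n}|\le|f|$ (so $e_{n}\in[-|f|,|f|]$, which matches the intended reading of $[-f,f]$ when $f$ is of indefinite sign). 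Continuity of $ng-|f|$ then produces a neighbourhood $V\subset\{g>0\}\subset U$ of $x_{0}$ on which $ng>|f|$, provided $n>|f(x_{0})|/g(x_{0})$; a case split on the sign of $f$ gives $e_{n}=f$ throughout $V$, completing (i).

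For (ii), the forward implication applies (i) with $f=\1\in E$ to obtain $e\in E\cap[\0,\1]$ vanishing off $U$ and equal to $1$ on $V$; continuity extends this equality to $\overline{V}$, whence $\1_{\overline{V}}\le e\le\1_{U}$. For the converse, given any open nonempty $U$, the hypothesis supplies an open nonempty $V\subset U$ and $e\in E\cap[\1_{\overline{V}},\1_{U}]$, whereupon $\0\le\1_{\overline{V}}\le e\le\1_{U}$ and $e\ge 1$ on the nonempty set $\overline{V}$, placing $e$ in $E\cap(\0,\1_{U}]$. Since this occurs for every open nonempty $U$, \cite[Proposition 5.3]{erz} yields order density of $E$.

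The main technical hurdle is in (i): assembling a single $e_{n}$ that respects the order-interval bound, the support condition, and the equality-on-an-open-set condition simultaneously. The naive candidate $f\wedge ng$ fails the first two of these when $f$ changes sign; the splitting $(f^{+}\wedge ng)-(f^{-}\wedge ng)$ repairs this by reducing $|e_{n}|$ to a pointwise maximum via disjointness of the positive and negative parts of $f$.
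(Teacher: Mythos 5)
Your proof is correct and follows essentially the same route as the paper's: take $g\in E\cap(\0,\1_U]$ from the order-density criterion, meet $f$ with a large multiple of $g$, and let $V$ be an open set where that multiple dominates $\left|f\right|$. The only differences are cosmetic --- you choose the scaling constant $n$ from the value at a single point $x_{0}$ and shrink $V$ by continuity (thereby avoiding the paper's step of first restricting $U$ so that $f$ is bounded there), and you treat signed $f$ via $f^{+},f^{-}$ where the paper simply reduces to $f\in E_{+}$.
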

\begin{proof}
(i): Let $f\in E_{+}$ and let $U\subset X$ be open and nonempty. By restricting $U$ if necessary, we may assume that $f$ is bounded on $U$. Since $E$ is order dense, there is $g\in E\cap\left(\0,\1_{U}\right]$. In particular, $g$ vanishes outside of $U$. There is $\delta>0$ such that $V=g^{-1}\left(\delta,+\8\right)\ne\varnothing$. Then, $e=\frac{\|f\|_{U}}{\delta}g\wedge f\in E$ vanishes outside of $U$ and coincides with $f$ on $V$.\medskip

Sufficiency in (ii) immediately follows from the characterization of order density quoted above, while necessity follows from (i) for $f=\1$.
\end{proof}

We will call $X$ \emph{almost locally compact} if the set of all points of $X$ that have a compact neighborhood is dense.

\begin{proposition}
Let $E$ be a sublattice of $\Co\left(X\right)$. If $E\subset \Co_{b}\left(X\right)$ or if $X$ is almost locally compact, then $E$ is order dense if and only if for every nonempty open $U$ there is $f\in E\backslash \left\{\0\right\}$ which vanishes outside of $U$.
\end{proposition}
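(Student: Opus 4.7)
My plan is to route both directions through the characterization of order density quoted just before Proposition \ref{od}: a sublattice $E\subset \Co(X)$ is order dense iff $E\cap(\0,\1_U]\neq\varnothing$ for every nonempty open $U\subset X$. The necessity is then immediate and uses neither of the two hypotheses on $E$ or $X$: any $g\in E\cap(\0,\1_U]$ is a nonzero element of $E$ with $\0\le g\le \1_U$ pointwise, hence vanishes off $U$, so it already witnesses the stated condition.

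For sufficiency I fix a nonempty open $U\subset X$ and aim to build $g\in E\cap(\0,\1_U]$ from the assumption. I take $f\in E\setminus\{\0\}$ vanishing off $U$ (supplied by the hypothesis), replace $f$ by $|f|\in E_+\setminus\{\0\}$, which is legitimate because $E$ is a linear sublattice closed under absolute value, and form $g:=|f|/\||f|\|_X$. For $g$ to land in $(\0,\1_U]$ the only missing fact is that $|f|$ is globally bounded on $X$. In the first case this is automatic since $|f|\in E\subset \Co_b(X)$. In the second case I first shrink $U$: by almost local compactness some $x\in U$ has a compact neighborhood $K$, and since $X$ is Tychonoff (hence regular) there is an open $V$ with $x\in V\subset \overline{V}\subset U\cap \Int K$, so that $\overline{V}$ is a closed subset of the compact $K$ and is therefore compact. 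Applying the hypothesis to $V$ rather than to $U$ yields $f\in E\setminus\{\0\}$ vanishing off $V$, hence supported inside the compact set $\overline{V}$; thus $|f|$ is bounded, and the normalized $g$ satisfies $g\in(\0,\1_V]\subset(\0,\1_U]$, as desired.

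The only mildly technical step is the shrinking argument in the second case, where almost local compactness must be combined with regularity of $X$ in order to nest $\overline{V}$ inside a compact set contained in $U$. Everything else — taking $|f|$, scalar renormalization, and pointwise domination by $\1_U$ — is routine and relies solely on $E$ being a linear sublattice. The role played by either hypothesis is the same: it keeps the hypothesized $f$ from being unbounded on $X$, so that a single scalar renormalization sends it into $(\0,\1_U]$.
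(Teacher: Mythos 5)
Your proof is correct and follows essentially the same route as the paper: both directions are reduced to the characterization $E\cap\left(\0,\1_{U}\right]\ne\varnothing$, and in the almost locally compact case the open set is shrunk to a relatively compact $V$ so that the witness is bounded and can be renormalized. The paper merely packages this last step as the statement that $E\cap \Co_{00}\left(X\right)$ is order dense, which is the same mechanism in different wording.
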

\begin{proof}
Necessity is clear, while sufficiency in the case of $E\subset \Co_{b}\left(X\right)$ easily follows from the characterization of order density quoted before Proposition \ref{od}.

Assume that $X$ is almost locally compact. Let $\Co_{00}\left(X\right)$ be the set of all continuous functions on $X$ which vanish outside of a compact set. Clearly, $E\cap \Co_{00}\left(X\right)\subset \Co_{00}\left(X\right)\subset \Co_{b}\left(X\right)$. Any nonempty open $U$ contains a nonempty open relatively compact $V$, and there is $f\in E\backslash \left\{\0\right\}$ which vanishes outside of $V$, and so is an element of $E\cap \Co_{00}\left(X\right)$. Hence, $E\cap \Co_{00}\left(X\right)$ is order dense, from where $E$ is order dense as well.
\end{proof}

\subsection{Ideals in sublattices of $\Co\left(X\right)$}

For any $f:X\to\R$ define $\ker f=f^{-1}\left(0\right)$ and $\supp f=X\backslash \ker f$. A set is called a \emph{cozero set} if it is of the form $\supp f$, where $f\in \Co\left(X\right)$. For $G\subset \Co\left(X\right)$ define $\ker G=\bigcap\limits_{g\in G}\ker g$, which is closed, and $\supp G=\bigcup\limits_{g\in G}\supp g=X\backslash \ker G$, which is an open set. Note that if $E\subset\Co\left(X\right)$ is dense, then $\supp E=X$.

\begin{lemma}\label{ssup}
Let $E$ be a sublattice of $\Co\left(X\right)$ and let $K\subset \supp E$ be compact. Then, there is $e\in E$ such that $e\ge \1_{K}$.
\end{lemma}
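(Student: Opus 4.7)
The plan is to use a standard compactness-and-partition-of-unity argument that only requires the linear sublattice structure of $E$.

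First, for each point $x\in K$, by assumption $x\in \supp E$, so there exists some $g_{x}\in E$ with $g_{x}(x)\ne 0$. Since $E$ is a sublattice it contains $\left|g_{x}\right|$, so after replacing $g_{x}$ by $\left|g_{x}\right|$ I may assume $g_{x}\in E_{+}$ and $g_{x}(x)>0$. By continuity of $g_{x}$, there is an open neighborhood $U_{x}$ of $x$ and some $\epsilon_{x}>0$ such that $g_{x}(y)>\epsilon_{x}$ for every $y\in U_{x}$; equivalently, the scaled function $h_{x}:=\epsilon_{x}^{-1}g_{x}\in E_{+}$ satisfies $h_{x}>\1_{U_{x}}$ pointwise on $U_{x}$.

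Next, since $K$ is compact and $\left\{U_{x}\right\}_{x\in K}$ is an open cover of $K$, extract a finite subcover $U_{x_{1}},\ldots,U_{x_{n}}$. Set
\[
e:=h_{x_{1}}+\cdots+h_{x_{n}}\in E_{+},
\]
which is a legitimate element of $E$ because $E$ is a linear sublattice. For any $y\in K$, there is some index $i$ with $y\in U_{x_{i}}$, and then $e(y)\ge h_{x_{i}}(y)>1$. On the other hand, $e\ge \0$ everywhere since each $h_{x_{i}}\in E_{+}$. Hence $e\ge \1_{K}$, which is the required inequality.

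There is no real obstacle here; the only care needed is to work with $\left|g_{x}\right|$ (to exploit that $E$ is a sublattice, not just a linear subspace) and to rescale each $g_{x}$ before summing, so that the resulting single element dominates $\1_{K}$ on all of $K$ simultaneously.
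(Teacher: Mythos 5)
Your proof is correct and is essentially the paper's argument: pick for each $x\in K$ a positive element of $E$ exceeding $1$ near $x$, extract a finite subcover by compactness, and combine. The only cosmetic difference is that you sum the finitely many functions where the paper takes their supremum $\bigvee_{k}\left|e_{x_{k}}\right|$; both work since $E$ is a linear sublattice.
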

\begin{proof}
For every $x\in K$ there is $e_{x}\in E$ such that $e_{x}\left(x\right)>1$, and so $e_{x}^{-1}\left(1,+\8\right)$ is an open neighborhood of $x$. As $K$ is compact, there are $e_{x_{1}},...,e_{x_{n}}\in E$, such that $K\subset \bigcup\limits_{k=1}^{n}e_{x_{k}}^{-1}\left(1,+\8\right)= e^{-1}\left(1,+\8\right)$, where $e=\bigvee\limits_{k=1}^{n}\left|e_{x_{k}}\right|\in E$.
\end{proof}

If $E$ is a sublattice of $\Co\left(X\right)$, and $A\subset X$, define $E\left(A\right)=\left\{f\in E,~\supp f\subset A\right\}$. Clearly, $E\left(A\right)$ is an ideal in $E$. Since elements of $E$ are continuous, it follows that $E\left(A\right)=E\left(\Int A\right)$. If $B\subset X$, then $E\left(A\cap B\right)=E\left(A\right)\cap E\left(B\right)$, and if $A\subset B$, then $E\left(A\right)\subset E\left(B\right)$. In the case when $E=\Co\left(X\right)$, we will use the notation $\Co\left(X;A\right)$ for $E\left(A\right)$, so that for a general $E$ we have $E\left(A\right)=\Co\left(X;A\right)\cap E$. Recall that any closed ideal in $\Co\left(X\right)$ is of the form $\Co\left(X;U\right)$, for a unique open $U$, and moreover, $\overline{I\left(G\right)}=\Co\left(X;\supp G\right)$, for any $G\subset \Co\left(X\right)$ (see e.g. \cite{et2}; note that $\supp I\left(G\right)\supset\supp G$, but since $I\left(G\right)\subset \Co\left(X;\supp G\right)$, we have $\supp I\left(G\right)=\supp G$). Let us extend this to the ideals in sublattices of $\Co\left(X\right)$. In what follows these sublattices are endowed with the compact-open topology inherited from $\Co\left(X\right)$.

\begin{proposition}\label{ds}Let $E\subset\Co\left(X\right)$ be a dense sublattice.
\item[(i)] If $J$ is an ideal in $E$, then $\overline{J}^{\Co\left(X\right)}=\Co\left(X;\supp J\right)$ and $\overline{J}^{E}=E\left(\supp J\right)$.
\item[(ii)] Every closed ideal of $E$ is of the form $E\left(U\right)$, for some open $U\subset X$.
\item[(iii)] $U\mapsto E\left(U\right)$ is a meet-preserving map from the lattice of open subsets of $X$ onto the lattice of the closed ideals of $E$.
\item[(iv)] If $J$ is an ideal in $E$ and $K$ is a compact subset of $\supp J$, then $E\left(K\right)\subset J$. In particular, if $\supp J$ is compact (and so clopen), then $J=E\left(\supp J\right)$ and is a principal ideal.
\end{proposition}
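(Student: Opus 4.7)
\emph{Plan for (i).} The inclusion $\overline{J}^{\Co(X)}\subset \Co(X;\supp J)$ is immediate, since every $j\in J$ satisfies $\supp j\subset \supp J$ and the latter set defines a closed ideal of $\Co(X)$. For the reverse inclusion, my plan is to leverage the already-quoted identity $\overline{I(J)}^{\Co(X)}=\Co(X;\supp J)$, where $I(J)$ denotes the ideal of $\Co(X)$ generated by $J$, by proving the stronger statement $I(J)\subset \overline{J}^{\Co(X)}$. Given $h\in I(J)_{+}$, which by definition is dominated by some $f\in J_{+}$, and given a compact $K\subset X$ and $\epsilon>0$, I would use density of $E$ in $\Co(X)$ to choose $e\in E$ with $\|e-h\|_{K}<\epsilon$, and then truncate by setting $g:=\0\vee e\wedge f$. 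Then $g\in E$ (sublattice) and $\0\le g\le f\in J$, so the ideal property of $J$ in $E$ gives $g\in J$. A short pointwise case check shows the truncation $e\mapsto \0\vee e\wedge f$ is $1$-Lipschitz on $[\0,f]$, so $\|g-h\|_{K}\le\|e-h\|_{K}<\epsilon$. For the $E$-closure, no separate argument is needed: since $E$ carries the subspace topology from $\Co(X)$, one has $\overline{J}^{E}=\overline{J}^{\Co(X)}\cap E=\Co(X;\supp J)\cap E=E(\supp J)$.

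\emph{Plan for (ii) and (iii).} Part (ii) is immediate from (i): if $L$ is a closed ideal in $E$, then $L=\overline{L}^{E}=E(\supp L)$. For (iii), surjectivity comes from (ii), and meet-preservation is just the identity $E(U\cap V)=E(U)\cap E(V)$ already recorded immediately before the statement of the proposition.

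\emph{Plan for (iv).} I would apply Lemma \ref{ssup} to the sublattice $J$ and the compact set $K\subset \supp J$ to produce $g\in J$ with $g\ge\1_{K}$. For any $f\in E(K)$, continuity on the compact $K$ and $\supp f\subset K$ give $|f|\le\|f\|_{K}\1_{K}\le\|f\|_{K}g$, with $|f|\in E$ and $\|f\|_{K}g\in J$, so the ideal property forces $|f|\in J$, and solidity of $J$ in $E$ yields $f\in J$. When $\supp J$ is compact, it is also open (being the union of the cozero sets $\supp j$, $j\in J$), hence clopen; combining $J\subset E(\supp J)$ with the preceding inclusion $E(\supp J)\subset J$ gives $J=E(\supp J)$, and a further application of Lemma \ref{ssup} with $K=\supp J$ produces a single $g\in J_{+}$ with $g\ge\1_{\supp J}$; boundedness of every $f\in J$ on the compact set $\supp J$ then yields $|f|\le\|f\|_{\supp J}g$, exhibiting $J$ as the principal ideal $I_{g}$.

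\emph{Expected main obstacle.} The delicate step is the truncation in (i): the approximant $e\in E$ of a target $h\in \Co(X;\supp J)$ has no reason to lie in $J$, and clipping it to stay in $J$ requires a concrete element of $J$ to clip against. This is precisely why the proof cannot work directly with arbitrary elements of $\Co(X;\supp J)$ and must be reduced first to $I(J)$, where every element is dominated by some $f\in J$.
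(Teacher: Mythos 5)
Your proof is correct, and parts (ii)--(iv) are essentially identical to the paper's (closedness gives $J=\overline{J}^{E}=E\left(\supp J\right)$; meet-preservation is $E\left(U\cap V\right)=E\left(U\right)\cap E\left(V\right)$; and (iv) is exactly the domination argument via Lemma \ref{ssup}). The genuine divergence is in (i). The paper never passes through the ideal $I\left(J\right)$ generated in $\Co\left(X\right)$: it shows that $J$ strictly separates the points of $U=\supp J$ --- for distinct $x,y\in U$ take $f\in J_{+}$ with $f\left(x\right)>0$ and, by density of $E$, some $e\in E_{+}$ with $e\left(x\right)>0$ and $e\left(y\right)=0$, so that $e\wedge f\in J$ separates $x$ from $y$ --- and then invokes the Stone--Weierstrass-type density criterion quoted from \cite{et2} to conclude $\overline{J}^{\Co\left(X\right)}=\Co\left(X;U\right)$. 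You instead reduce to the quoted closure formula $\overline{I\left(G\right)}=\Co\left(X;\supp G\right)$ and prove $I\left(J\right)\subset\overline{J}^{\Co\left(X\right)}$ by approximating a dominated element $h\in\left[\0,f\right]$, $f\in J_{+}$, by $e\in E$ and clipping to $g=\0\vee e\wedge f\in E\cap\left[\0,f\right]\subset J$, with the pointwise $1$-Lipschitz estimate $\|g-h\|_{K}\le\|e-h\|_{K}$ finishing the approximation. Both routes lean on a previously quoted fact about ideals of $\Co\left(X\right)$; yours is slightly longer but more hands-on and self-contained (only the closure formula and elementary lattice inequalities are needed), while the paper's is shorter at the price of invoking the separation criterion for density. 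Your diagnosis of the ``main obstacle'' --- that the truncation requires a concrete majorant in $J$, which is why one must first descend to $I\left(J\right)$ rather than work with arbitrary elements of $\Co\left(X;\supp J\right)$ --- is exactly right, and the identification $\overline{J}^{E}=\overline{J}^{\Co\left(X\right)}\cap E$ is the same step in both proofs.
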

\begin{proof}
(i): Let $U=\supp J$. Clearly, $\overline{J}^{\Co\left(X\right)}\subset \Co\left(X;U\right)$. If $x,y\in U$ are distinct, take $f\in J_{+}$ with $f\left(x\right)>0$. Since $E$ is dense, there is $e\in E_{+}$ with $e\left(x\right)>0$ and $e\left(y\right)=0$. Then, $e\wedge f\in J$ strictly separates $x$ from $y$. Since $x,y$ were arbitrary, $\overline{J}^{\Co\left(X\right)}=\Co\left(X;U\right)$. Thus, $\overline{J}^{E}=\overline{J}^{\Co\left(X\right)}\cap E=\Co\left(X;U\right)\cap E=E\left(U\right)$.\medskip

(ii) follows from (i), since if $J$ is closed in $E$, then $J=\overline{J}^{E}=E\left(\supp J\right)$. (iii) follows from (ii).\medskip

(iv): From Lemma \ref{ssup}, one can find $e\in J$ such that $e\ge\1_{K}$. Then, if $f\in E$ is such that $\supp f\subset K$, it follows that $\left|f\right|\le \|f\|e$, and so $f\in J$. The last claim follows from combining the first claim with Lemma \ref{ssup}.
\end{proof}

Part (iii) of Proposition \ref{ds} motivates the following question.

\begin{question}\label{qu}
For which $E$ is it true that $U\mapsto E\left(U\right)$ is join-preserving into $\Io_{E}$, i.e. $E\left(U\right)+E\left(V\right)=E\left(U\cup V\right)$, for any open $U,V\subset X$?
\end{question}

We will later consider a class of $E$'s for which this correspondence is join-preserving into the lattice of closed ideals in $E$. Consider the case when $E=\Co\left(X\right)$.

\begin{proposition}\label{normal}
$X$ is normal if and only if $\Co\left(X;U\right)+\Co\left(X;V\right)=\Co\left(X;U\cup V\right)$, for any open $U,V\subset X$, and if and only if $\Co\left(X;U\right)+\Co\left(X;V\right)=\Co\left(X\right)$, for any open $U,V\subset X$ such that $U\cup V=X$.
\end{proposition}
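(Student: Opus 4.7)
The plan is to close the cycle (a) $\Rightarrow$ (b) $\Rightarrow$ (c) $\Rightarrow$ (a). The implication (b) $\Rightarrow$ (c) is immediate: taking $U\cup V=X$ in (b) gives $\Co(X;U)+\Co(X;V)=\Co(X;X)=\Co(X)$ since any function vanishes outside of $X$.

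For (a) $\Rightarrow$ (b), fix open $U,V\subset X$ and $f\in\Co(X;U\cup V)$, so that $\supp f\subset U\cup V$, equivalently $f$ vanishes on $X\setminus(U\cup V)$. The strategy is to construct $h\in\Co(X;V)$ such that $g:=f-h$ lies in $\Co(X;U)$. On the closed set $C:=(X\setminus U)\cup(X\setminus V)=X\setminus(U\cap V)$, define $\tilde{h}$ by $\tilde{h}=f$ on $X\setminus U$ and $\tilde{h}=\0$ on $X\setminus V$. The two prescriptions agree on their intersection $X\setminus(U\cup V)$ because $f$ vanishes there, and since $X\setminus U$ and $X\setminus V$ are closed in $C$, the pasting lemma shows that $\tilde{h}:C\to\R$ is continuous. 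By the Tietze extension theorem for normal spaces, $\tilde{h}$ extends to some $h\in\Co(X)$. Then $h$ vanishes on $X\setminus V$, so $h\in\Co(X;V)$, and $g:=f-h$ vanishes on $X\setminus U$ (since $h=f$ there), so $g\in\Co(X;U)$; this yields the required decomposition $f=g+h$.

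For (c) $\Rightarrow$ (a), pick disjoint closed sets $A,B\subset X$ and set $U:=X\setminus A$, $V:=X\setminus B$. Both are open and $U\cup V=X\setminus(A\cap B)=X$, so by (c) we may write $\1=g+h$ with $g\in\Co(X;U)$ and $h\in\Co(X;V)$. Then $g$ vanishes on $A$, and on $B$ we have $h=0$, hence $g=1$. The truncation $e:=(g\vee\0)\wedge\1\in\Co(X)$ takes values in $[0,1]$, equals $0$ on $A$ and $1$ on $B$, which is the Urysohn characterization of normality.

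The only delicate point is the application of Tietze extension in (a) $\Rightarrow$ (b): one must verify that the piecewise definition of $\tilde{h}$ on $C=(X\setminus U)\cup(X\setminus V)$ is consistent and continuous, which relies on the hypothesis $\supp f\subset U\cup V$ precisely so that the two definitions agree on $(X\setminus U)\cap(X\setminus V)=X\setminus(U\cup V)$. Everything else is formal.
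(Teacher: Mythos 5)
Your proposal is correct and follows essentially the same route as the paper: the forward direction uses exactly the same pasting-plus-Tietze construction of $h$ on $\left(X\backslash U\right)\cup\left(X\backslash V\right)$, and the converse recovers normality from a decomposition of $\1$ (the paper takes the disjoint neighborhoods $f^{-1}\left[0,\frac{1}{2}\right)$ and $g^{-1}\left[0,\frac{1}{2}\right)$ directly, while you first truncate to a Urysohn function, which is the same argument in a slightly different order). No gaps.
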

\begin{proof}
Let $X$ be normal, let $U,V\subset X$ be open, and let $A:=X\backslash U$ and $B:=X\backslash V$, which are closed. Let $f\in \Co\left(X;U\cup V\right)$. Define $h:A\cup B\to \R$ by $h\left(x\right)=f\left(x\right)$, if $x\in A$, and $h\left(x\right)=0$, if $x\in B$. Note that $f$ vanishes on $A\cap B$ and so $h$ is well-defined and continuous. Hence, by Tietze-Urysohn theorem it can be extended to a continuous function on $X$ (which we also denote $h$). It follows that $h\in \Co\left(X;V\right)$, while $f-h\in\Co\left(X;U\right)$.

Conversely, assume that $A,B\subset X$ are closed and disjoint. Let $U:=X\backslash A$ and $V:=X\backslash B$, which are open and $U\cup V=X$. Then, $\1\in\Co\left(X\right)=\Co\left(X;U\right)+\Co\left(X;V\right)$, hence there is $f\ge\0$ which vanishes on $A$ and $g\ge\0$ that vanishes on $B$ such that $f+g=\1$. Then, $f^{-1}\left[0,\frac{1}{2}\right)$ and $g^{-1}\left[0,\frac{1}{2}\right)$ are disjoint neighborhoods of $A$ and $B$, respectively. Thus, $X$ is normal.
\end{proof}

\begin{remark}\label{njo}Analogously, $X$ is normal if and only if $\Co_{b}\left(X;U\right)+\Co_{b}\left(X;V\right)=\Co_{b}\left(X;U\cup V\right)$, for any open $U,V\subset X$, and if and only if $\Co_{b}\left(X;U\right)+\Co_{b}\left(X;V\right)=\Co_{b}\left(X\right)$, for any open $U,V\subset X$ such that $U\cup V=X$. However, it is not always true that if $E$ is a dense sublattice of $\Co\left(K\right)$, where $K$ is compact and Hausdorff and $U,V\subset K$ are open, then $E\left(U\cup V\right)=E\left(U\right)+E\left(V\right)$. Let $E$ be a subset of $\Co\left[-1,1\right]$ that consists of functions which are even in a neighborhood of $0$. It is easy to verify that $E$ is a dense sublattice of $\Co\left[-1,1\right]$, but $f$ defined by $f\left(t\right)=\left|t\right|$ belongs to $E\left(\left[-1,0\right)\cup \left(0,1\right]\right)$, but cannot be decomposed into a sum of elements of $E\left(\left[-1,0\right)\right)$ and $E\left(\left(0,1\right]\right)$.
\end{remark}

\subsection{Urysohn sublattices}

We will call a sublattice $E\subset \Co\left(X\right)$ an \emph{Urysohn sublattice} if for every open neighborhood $U$ of $x\in X$ there is $f\in E\left(U\right)$ with $f\left(x\right)=1$ (and in particular $E$ is dense in $\Co\left(X\right)$). In other words, $E$ is Urysohn if $\supp E\left(U\right)=U$, for any open set $U\subset X$. Clearly, $\Co\left(X\right)$ itself is Urysohn. According to the norm-dense Urysohn Lemma, every norm-dense sublattice of $\Co_{b}\left(X\right)$ is Urysohn. Using the same method one can show that if $X$ is locally compact, then any dense sublattice of $\Co_{0}\left(X\right)$ (e.g. $\Co_{00}\left(X\right)$) is Urysohn. The next result is proven by applying Lemma \ref{ssup} to $E\left(U\right)$.

\begin{lemma}\label{ssuup}
Let $E$ be an Urysohn sublattice of $\Co\left(X\right)$ and let $U\subset X$ be an open neighborhood of a compact $K\subset X$. Then, there is $e\in E\left(U\right)$ such that $e\ge \1_{K}$.
\end{lemma}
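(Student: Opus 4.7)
The plan is to apply Lemma \ref{ssup} to the ideal $E(U)$ viewed as a sublattice of $\Co(X)$. Since $E(U)$ is an ideal in $E$, it is in particular a sublattice of $E$ and hence of $\Co(X)$, so Lemma \ref{ssup} is applicable to it provided we can show that the compact set $K$ sits inside $\supp E(U)$.

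This containment is exactly where the Urysohn hypothesis enters. By the very definition recalled just before the lemma, $E$ being Urysohn means that $\supp E(U)=U$ for every open $U\subset X$. Since $K\subset U=\supp E(U)$, Lemma \ref{ssup} (applied with $E(U)$ in place of the sublattice there) yields some $e\in E(U)$ with $e\ge \1_{K}$, which is exactly what is claimed.

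There is no real obstacle here: the whole content of the lemma is the compatibility between the Urysohn property (which controls the supports of the ideals $E(U)$) and the compactness-plus-sublattice argument already carried out in Lemma \ref{ssup}. In effect, the statement is a relativization of Lemma \ref{ssup} from the ambient sublattice $E$ to the localized ideal $E(U)$, made possible precisely by the Urysohn assumption $\supp E(U)=U$.
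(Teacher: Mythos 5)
Your proof is correct and follows exactly the route the paper indicates: apply Lemma \ref{ssup} to the sublattice $E\left(U\right)$, using the Urysohn property to guarantee $K\subset U=\supp E\left(U\right)$. You have simply spelled out the details that the paper leaves implicit.
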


For Urysohn sublattice $E$ we have that  $\supp E\left(A\right)=\Int A$, for any $A\subset X$, and so $E\left(A\right)\subset E\left(B\right)$ $\Leftrightarrow$ $\Int A\subset \Int B$, for any $A,B\subset X$. Therefore, the correspondence $U\mapsto E\left(U\right)$ is an order isomorphism between the set of open subsets of $X$ and the set of the closed ideals in $E$. In particular, for any open $U,V\subset X$ we have that $\supp\left(E\left(U\right)+E\left(V\right)\right)=U\cup V$. It follows that $E\left(U\cup V\right)= \overline{E\left(U\right)+E\left(V\right)}$. This isomorphism may still not be join-preserving, when considered as a map into $\Io_{E}$, because it is possible that $E\left(U\cup V\right)\ne E\left(U\right)+E\left(V\right)$, for some open $U,V\subset X$ (see Remark \ref{njo}). Hence, the Question \ref{qu} for Urysohn lattices can be restated as follows.

\begin{question}\label{qu}
For which Urysohn sublattices $E$ is the sum of two closed ideals closed?
\end{question}

According to Proposition \ref{normal}, the sum of closed ideals in $\Co\left(X\right)$ is closed if and only if $X$ is normal. Let us investigate a weaker property.

\begin{proposition}\label{dech}
Let $E\subset \Co\left(X\right)$ be a sublattice with $\supp E=X$. Let $U,V\subset X$ be open. If $E=E\left(U\right)+E\left(V\right)$, then $U\cup V=X$. The converse holds in each of the following cases:
\item[(i)] $E$ is dense in $\Co\left(X\right)$ and $U,V$ are co-compact (i.e. have compact complements);
\item[(ii)] $E$ is Urysohn and either $U$ or $V$ is co-compact;
\item[(iii)] $X$ is normal and either $E=\Co\left(X\right)$ or $E$ is dense in $\Co_{b}\left(X\right)$;
\item[(iv)] $U\cap V=\varnothing$ (and so $U,V$ are clopen) and either $E=\Co\left(X\right)$ or $E$ is dense in $\Co_{b}\left(X\right)$.
\end{proposition}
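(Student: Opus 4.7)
The necessity is immediate: given $x\in X$, since $\supp E=X$ there is $f\in E$ with $f(x)\ne 0$; writing $f=g+h$ with $g\in E(U)$, $h\in E(V)$ forces $x\in \supp g\cup \supp h\subset U\cup V$. So throughout the sufficiency direction I would assume $U\cup V=X$ and, given $f\in E$, construct an explicit splitting $f=g+h$ with $g\in E(U)$, $h\in E(V)$. In each case the existence of a suitable ``cutoff'' $e\in E$ (vanishing outside one of the sets and equaling $f$ on the complement of the other) will do the job, via $f=(f-e)+e$. The four cases differ only in which Urysohn-type tool produces $e$.

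For (i), set $K:=X\setminus U$ and $L:=X\setminus V$, both compact and, since $U\cup V=X$, disjoint. Corollary \ref{ssul} (norm-density inside $\Co(X)$, applied with these compacts) yields $e\in E$ equal to $f$ on $K$ and vanishing on $L$, so $\supp e\subset V$ and $\supp(f-e)\subset U$. For (ii), suppose $U$ is co-compact; then $K:=X\setminus U$ is compact and $K\subset V$. Lemma \ref{ssuup} (Urysohn property applied to the open neighborhood $V$ of $K$) yields $e\in E(V)$ with $e\ge \1_K$. Since $f$ is continuous and $K$ compact, pick $n\in\N$ with $n\ge \|f\|_K$ and set $h:=(f\wedge ne)\vee(-ne)\in E$. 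Then $|h|\le ne$ so $\supp h\subset \supp e\subset V$, while on $K$ we have $ne\ge n\ge|f|$, so $h=f$ there; hence $\supp(f-h)\subset U$.

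For (iii), if $E=\Co(X)$ this is literally the forward implication in Proposition \ref{normal}. If $E$ is dense in $\Co_b(X)$ and $X$ is normal, let $K:=X\setminus U$ (closed, possibly non-compact) and observe $K\subset V$; then the normal-space clause of the norm-dense Urysohn lemma (Proposition \ref{dul}) provides $e\in E\cap[-f,f]$ coinciding with $f$ on $K$ and vanishing outside $V$, i.e.\ $e\in E(V)$ and $f-e\in E(U)$. For (iv), $U$ and $V$ partition $X$ and are therefore clopen. If $E=\Co(X)$, just take $g:=f\cdot\1_U$ and $h:=f\cdot\1_V$, which are continuous because $U,V$ are clopen. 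If $E$ is dense in $\Co_b(X)$, apply the ``$K=U$'' clause of Proposition \ref{dul} with $K=U$ (closed) serving as its own open neighborhood: no normality is needed, and the resulting $e\in E$ coincides with $f$ on $U$ and vanishes outside $U$, so $e\in E(U)$ and $f-e\in E(V)$.

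The only real subtlety is in case (ii), where $f\in E$ may be unbounded and $e$ is merely majorizing $\1_K$; the double truncation $(f\wedge ne)\vee(-ne)$ is needed both to stay inside the lattice $E$ and to make the support control work, and it is crucial that $n$ can be chosen as a bound for $f$ on the compact set $K$ rather than on all of $X$. Everything else reduces to selecting the matching Urysohn statement from Section~\ref{s} and reading off the supports.
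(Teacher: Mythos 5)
Your proof is correct and follows essentially the same route as the paper: necessity by evaluating at a point of $X\setminus(U\cup V)$, and sufficiency in each case by producing a cutoff $e$ from the matching Urysohn-type statement (Corollary \ref{ssul}, Lemma \ref{ssuup}, Propositions \ref{normal} and \ref{dul}) and splitting $f=(f-e)+e$. The only cosmetic difference is that in case (ii) you handle a general $f\in E$ via the double truncation $(f\wedge ne)\vee(-ne)$, whereas the paper simply reduces to $f\in E_{+}$ and takes $\|f\|_{L}e\wedge f$.
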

\begin{proof}
If $x\notin U\cup V$, then $E\left(U\right),E\left(V\right)\subset E\left(X\backslash \left\{x\right\}\right)$, from where $E\left(U\right)+E\left(V\right)\subset E\left(X\backslash \left\{x\right\}\right) \subsetneq E$.\medskip

Now assume that $U\cup V=X$. We will prove that $E=E\left(U\right)+E\left(V\right)$ in each of the cases.\medskip

(i): If $K=X\backslash U$ and $L=X\backslash V$ are compact, then $K\cap L=\varnothing$, and so from Corollary \ref{ssul}, there is $e\in E$ that vanishes on $K$ (and so $e\in E\left(U\right)$), and coincides with $f$ on $L$ (and so $f-e\in E\left(V\right)$).\medskip

(ii): If $L=X\backslash V$ is compact, then $U$ is an open neighborhood of $L$. According to Lemma \ref{ssuup} there is $e\in E\left(U\right)$ such that $e\ge\1_{L}$. If $f\in E_{+}$, then $g=\|f\|_{L}e\wedge f\in E\left(U\right)$, and coincides with $f$ on $L$ (and so $f-g\in E\left(V\right)$).\medskip

(iii) and (iv) follow from Proposition \ref{normal} and the norm-dense Urysohn Lemma.
\end{proof}

\begin{remark}\label{urur}
A reformulation of (ii) is that if $E$ is an Urysohn sublattice of $\Co\left(X\right)$ and $U\subset X$ is an open neighborhood of a compact $K\subset X$, for every $f\in E_{+}$ there is $e\in E\cap \left[\0,f\right]$ which coincides with $f$ on $K$ and vanishes outside of $U$ (or vice versa). This is yet another version of the Urysohn lemma for sublattices.
\end{remark}

\begin{corollary}\label{dsi}
If $E\subset \Co\left(X\right)$ is an Urysohn sublattice, $H\subset E$ is an ideal such that $\supp H$ is locally compact, $K\subset \supp H$ is compact and $f\in E$, there is $h\in H\cap \left[-f,f\right]$ which coincides with $f$ on $K$.
\end{corollary}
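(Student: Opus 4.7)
The plan is to produce a candidate $h$ by truncating $f$ against a multiple of a bump function $e\in E$ that is large on $K$ and supported in a compact subset of $\supp H$, then use Proposition \ref{ds}(iv) to land inside $H$. The key enabling fact is that $\supp H$ is open in $X$: indeed, $\supp H=\bigcup_{h\in H}\supp h$ is a union of open sets. Combined with the hypothesis that $\supp H$ is locally compact, this will let me produce a compact ``buffer zone'' around $K$ inside $\supp H$.

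First, I would choose the geometry. Since $\supp H$ is open in $X$ and locally compact in the subspace topology, every point of $K$ has an open (in $\supp H$, hence in $X$) neighborhood with compact closure inside $\supp H$. Compactness of $K$ lets me pick finitely many such neighborhoods whose union $U$ and union of closures $L$ satisfy $K\subset U\subset L\subset \supp H$, with $U$ open in $X$ and $L$ compact. Then, invoking the Urysohn property of $E$ via Lemma \ref{ssuup}, I obtain $e\in E(U)$ with $e\ge \1_{K}$; in particular $\supp e\subset U\subset L$, so $e\in E(L)$.

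Next, I would define the candidate. Set $c:=1+\|f\|_{L}$, which is finite because $L$ is compact and $f$ is continuous, and let
\[
h:=\left(ce\wedge f\right)\vee\left(-ce\right)\in E.
\]
By construction $-ce\le h\le ce$ and a short pointwise check (splitting according to the sign of $f$) shows $|h|\le |f|$, so $h\in[-f,f]$ in the sense of the previous results (the interval $[-|f|,|f|]$). Moreover $ce\in E(L)$ and $L$ is a compact subset of $\supp H$, so Proposition \ref{ds}(iv) gives $ce\in H$; since $H$ is solid and $|h|\le ce$, we conclude $h\in H$. Finally, on $K$ we have $ce\ge c>\|f\|_{K}\ge |f|$, so the truncations are inactive and $h=f$ pointwise on $K$.

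The main obstacle, and the only place real topological content is used, is the first step: extracting a compact $L\subset\supp H$ containing an $X$-open neighborhood $U$ of $K$. Once that is in hand, the Urysohn property of $E$ supplies the bump $e$, and Proposition \ref{ds}(iv) automatically promotes the truncated function into $H$; the rest is the standard cut-off identity. I do not anticipate further difficulty.
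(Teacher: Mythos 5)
Your proof is correct and follows essentially the same route as the paper's: local compactness of the open set $\supp H$ yields a compact buffer $L$ with $K\subset U\subset L\subset \supp H$, and Proposition \ref{ds}(iv) is what places the cut-off function inside $H$. The only cosmetic difference is that you re-derive the truncation step by hand from the bump of Lemma \ref{ssuup} (using the two-sided cut-off to handle signed $f$), whereas the paper simply invokes Remark \ref{urur}, whose proof is exactly that construction.
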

\begin{proof}
Let $U$ be an open neighborhood of $K$ such that $\overline{U}$ is a compact subset of $\supp H$. Let $e\in E$ be produced by Remark \ref{urur} for $E,f,K,U$. Then, $\overline{\supp e}\subset \overline{U}$ is a compact subset of $\supp H$. Hence, $e\in H$, by virtue of part (iv) of Proposition \ref{ds}.
\end{proof}

It is clear that if $J$ is a countably generated ideal in a sublattice $E$ of $\Co\left(X\right)$, then $\supp J$ is a cozero set. If $f\in\Co_{0}\left(X\right)$, then $\supp f$ is a locally compact $\sigma$-compact set. Using Urysohn lemma one can get a partial converse to these two observations.

\begin{lemma}\label{ds2}Let $E\subset\Co\left(X\right)$ be a Urysohn sublattice and let $J$ be an ideal in $E$. If $\supp J$ is a locally compact $\sigma$-compact set, there is a countable $G\subset J$, such that $\supp J=\supp G$.
\end{lemma}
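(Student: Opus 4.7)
The plan is to reduce to compact pieces and apply the previously established Corollary \ref{dsi}, which is tailor-made for extracting elements of an ideal that agree with a given function on a prescribed compact set sitting inside the support. Since $\supp J$ is $\sigma$-compact, I can write $\supp J = \bigcup_{n\in\N} K_{n}$ for a sequence of compact sets $K_{n}$.

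For each $n$, since $E$ is Urysohn (hence dense in $\Co(X)$) and $K_{n}\subset X=\supp E$, Lemma \ref{ssup} produces some $e_{n}\in E$ with $e_{n}\ge \1_{K_{n}}$. Now I invoke Corollary \ref{dsi} with the ideal $H:=J$ (whose support $\supp J$ is locally compact by hypothesis), the compact set $K:=K_{n}\subset\supp J$, and the function $f:=e_{n}\in E$. This yields $h_{n}\in J\cap\left[-e_{n},e_{n}\right]$ with $\left.h_{n}\right|_{K_{n}}=\left.e_{n}\right|_{K_{n}}\ge \1_{K_{n}}$, so in particular $K_{n}\subset \supp h_{n}$.

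Set $G:=\left\{h_{n},~n\in\N\right\}$, which is countable and contained in $J$. On the one hand, $\supp G=\bigcup_{n}\supp h_{n}\supset \bigcup_{n}K_{n}=\supp J$; on the other hand, since each $h_{n}\in J$ we have $\supp h_{n}\subset\supp J$, so $\supp G\subset\supp J$. Hence $\supp G=\supp J$, as required.

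No serious obstacle is anticipated: the $\sigma$-compact decomposition is given for free, and all the real work (pushing functions with prescribed values into the ideal using the local compactness of the support) has already been packaged into Corollary \ref{dsi}. The only point that deserves a sentence of care is the choice of a majorant $e_{n}\ge \1_{K_{n}}$ in $E$, which is where the Urysohn (in particular, dense) hypothesis on $E$ enters through Lemma \ref{ssup}.
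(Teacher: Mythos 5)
Your proof is correct. It follows the same basic strategy as the paper's --- exhaust $\supp J$ by countably many compacta and, for each one, manufacture an element of $J$ that does not vanish there --- but the two arguments are packaged differently. The paper chooses a nested exhaustion by open relatively compact sets $V_{n}$ with $\overline{V_{n}}\subset V_{n+1}$, applies Lemma \ref{ssuup} to the pair $\left(\overline{V_{n}},V_{n+1}\right)$ to get $f_{n}\in E\left(V_{n+1}\right)$ with $f_{n}\ge\1_{\overline{V_{n}}}$, and then lands in $J$ via part (iv) of Proposition \ref{ds} because $\overline{\supp f_{n}}\subset\overline{V_{n+1}}$ is a compact subset of $\supp J$. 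You instead take an arbitrary compact cover $\left\{K_{n}\right\}$ and delegate all of that work to Corollary \ref{dsi}, which internally performs exactly the same two steps (choose a relatively compact open neighborhood of $K_{n}$ inside $\supp J$, then invoke Proposition \ref{ds}(iv)). What your route buys is modularity: you do not need to set up the nested exhaustion yourself, and the local compactness hypothesis is consumed entirely inside \ref{dsi}. The one mild redundancy is the detour through Lemma \ref{ssup} to produce the majorant $e_{n}$; any $f\in E$ with $f\ge\1_{K_{n}}$ works, and indeed \ref{dsi} applied to such an $f$ immediately gives $h_{n}\in J$ with $h_{n}\equiv f\ge 1$ on $K_{n}$, which is all you use. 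Both arguments are sound and of comparable length.
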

\begin{proof}
There is a sequence $\left(V_{n}\right)_{n\in\N}$ of open relatively compact sets that add up to $\supp J$ and such that $\overline{V_{n}}\subset V_{n+1}$, for every $n\in\N$. For every $n\in\N$, since $V_{n+1}$ is an open neighborhood of a compact set $\overline{V_{n}}$, from Lemma \ref{ssuup} there is $f_{n}\in E\left(V_{n+1}\right)$, which does not vanish on $\overline{V_{n}}$; since $\overline{\supp f_{n}}\subset \overline{V_{n+1}}$, it follows from part (iv) of Proposition \ref{ds} that $f_{n}\in J$. Letting $G=\left\{f_{n}\right\}_{n\in\N}$ we have $\supp G=\supp J$.
\end{proof}

It is easy to see that $\left\{f\right\}^{d}=\Co\left(X;\ker f\right)$, for any $f\in \Co\left(X\right)$, and if $G\subset \Co\left(X\right)$, then $G^{d}=\bigcap\limits_{g\in G}\left\{g\right\}^{d} =\Co\left(X;\ker G\right)$. Hence, if $E$ is a sublattice of $\Co\left(X\right)$, we have $G^{d}_{E}=E\left(\ker G\right)$. If $E$ is an Urysohn sublattice, and $A\subset X$, then $E\left(A\right)_{E}^{d}=E\left(X\backslash \Int A\right)=E\left(X\backslash \overline{\Int A}\right)$, from where $$E\left(A\right)_{E}^{dd}=E\left(X\backslash \Int \left(X\backslash \Int A\right)\right)=E\left(\overline{\Int A}\right),$$  as well as  $G_{E}^{dd}= E\left(\overline{\supp G}\right)$, for $G\subset E$.

\begin{proposition}\label{ur}Let $E$ be a Urysohn sublattice of $\Co\left(X\right)$, and $A\subset X$. Then:
\item[(i)] $E\left(A\right)$ is a band in $E$ if and only if $\Int A=\Int \overline{\Int A}$, i.e. $\Int A$ is regularly open.
\item[(ii)] If $E\left(A\right)$ is a projection band in $E$, then $\Int A$ is clopen. The converse holds if $\Int A$ is compact or co-compact or $E=\Co\left(X\right)$, or is a dense sublattice of $\Co_{b}\left(X\right)$.
\end{proposition}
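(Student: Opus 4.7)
The plan is to reduce both parts to the pseudo-complement formulas $E(A)^{d}_{E} = E(X \setminus \overline{\Int A})$ and $E(A)^{dd}_{E} = E(\overline{\Int A})$ established in the paragraph immediately preceding the statement, and then to invoke the order isomorphism $U \mapsto E(U)$ between open subsets of $X$ and closed ideals of $E$, which is available because $E$ is Urysohn.

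For (i), since $E$ is a sublattice of the Archimedean vector lattice $\Co(X)$, it is itself Archimedean, so $E(A)$ is a band in $E$ iff $E(A) = E(A)^{dd}_{E}$. Substituting $E(A) = E(\Int A)$ together with $E(A)^{dd}_{E} = E(\overline{\Int A}) = E(\Int \overline{\Int A})$ and applying injectivity of $U \mapsto E(U)$ on the open sets collapses this to the set-theoretic identity $\Int A = \Int \overline{\Int A}$, which is by definition regular openness of $\Int A$.

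For (ii), by Corollary \ref{com} a projection band is a complemented element of $\Io_{E}$, and its complement is forced to be its pseudo-complement $E(A)^{d}_{E}$. Hence $E(A)$ is a projection band iff
\[
E = E(\Int A) + E(X \setminus \overline{\Int A}).
\]
The forward direction uses the universally valid first clause of Proposition \ref{dech} (applicable since Urysohn implies $\supp E = X$): this decomposition forces $\Int A \cup (X \setminus \overline{\Int A}) = X$, which rearranges to $\overline{\Int A} \subset \Int A$, making $\Int A$ clopen. For the converse, when $\Int A$ is clopen the sets $\Int A$ and $X \setminus \Int A = X \setminus \overline{\Int A}$ are disjoint open sets covering $X$; then clause (ii) of Proposition \ref{dech} (``one of $U,V$ co-compact'') yields the decomposition when $\Int A$ is compact or co-compact, while clause (iv) (``$U \cap V = \varnothing$'') yields it when $E = \Co(X)$ or $E$ is dense in $\Co_{b}(X)$.

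I do not anticipate any genuine obstacle: everything reduces to a substitution into Proposition \ref{dech}. The only mild delicacy is keeping interior and closure straight while matching the hypotheses of Proposition \ref{dech}, but the preparatory identities $E(A) = E(\Int A)$ and $E(A)^{d}_{E} = E(X \setminus \overline{\Int A})$ are exactly what makes this bookkeeping automatic.
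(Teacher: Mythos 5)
Your proposal is correct and follows essentially the same route as the paper: part (i) is the identity $E\left(A\right)_{E}^{dd}=E\left(\overline{\Int A}\right)$ combined with injectivity of $U\mapsto E\left(U\right)$ on open sets, and part (ii) is exactly the reduction to Proposition \ref{dech} applied to $\Int A$ and $X\backslash \overline{\Int A}$, with the same case analysis for the converse. The only cosmetic difference is that you route the equivalence ``projection band iff $E=E\left(A\right)+E\left(A\right)^{d}_{E}$'' through Corollary \ref{com}, whereas it is already the definition of a projection band.
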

\begin{proof}
(i) follows from $E\left(A\right)_{E}^{dd}=E\left(\overline{\Int A}\right)$, and the fact that for Urysohn sublattices $E\left(A\right)= E\left(B\right)$ $\Leftrightarrow$ $\Int A=\Int B$, for any $A,B\subset X$.

(ii): $E\left(A\right)$ is a projection band in $E$ if and only if $E=E\left(A\right)+E\left(A\right)_{E}^{d}=E\left(\Int A\right)+E\left(X\backslash \overline{\Int A}\right)$, which according to Proposition \ref{dech} implies $X=\Int A\cup X\backslash \overline{\Int A}$, or clopeness of $\Int A$. From the same proposition $E=E\left(\Int A\right)+E\left(X\backslash \overline{\Int A}\right)$ also follows from clopeness of $\Int A$ under the declared additional assumptions.
\end{proof}

\subsection{Miscellaneous results about sublattices of $\Co\left(X\right)$}

The following result shows that usually $\Co\left(X\right)$ has a large supply of non-closed ideals.

\begin{proposition}\label{pi}
Let $f\in\Co\left(X\right)$. If $I_{f}$ is closed, then $\supp f$ is clopen. Conversely, if $\supp f$ is compact, then $I_{f}=\Co\left(X;\supp f\right)$ is a projection band (and so closed).
\end{proposition}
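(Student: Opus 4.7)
I will handle the two implications separately, using Proposition \ref{ds} as the main tool. As a preliminary observation, $\supp I_{f}=\supp f$: the inclusion $\supseteq$ follows because $f\in I_{f}$, and $\subseteq$ because every $g\in I_{f}$ satisfies $\left|g\right|\le \alpha \left|f\right|$ for some $\alpha\ge 0$, forcing $\supp g\subset \supp f$. Since $\supp f$ is always open, the clopenness claim reduces to showing it is closed.

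For the forward direction, suppose $I_{f}$ is closed in $\Co\left(X\right)$. Applying part (i) of Proposition \ref{ds} with $E=\Co\left(X\right)$ (which is trivially a dense sublattice of itself), I obtain $I_{f}=\overline{I_{f}}^{\Co\left(X\right)}=\Co\left(X;\supp I_{f}\right)=\Co\left(X;\supp f\right)$. It remains to show that $\supp f$ is closed. Suppose not: pick $x_{0}\in \overline{\supp f}\backslash \supp f$ and a net $x_{\alpha}\in \supp f$ with $x_{\alpha}\to x_{0}$; by continuity $f\left(x_{\alpha}\right)\to f\left(x_{0}\right)=0$. The key step is to exhibit some $g\in\Co\left(X;\supp f\right)\backslash I_{f}$. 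I take $g=\sqrt{\left|f\right|}$, which is continuous with $\supp g=\supp f$, so $g\in\Co\left(X;\supp f\right)$. However, $g\in I_{f}$ would demand $\sqrt{\left|f\right|}\le \alpha \left|f\right|$, equivalently $\left|f\right|\ge \alpha^{-2}$ on $\supp f$, contradicting $f\left(x_{\alpha}\right)\to 0$.

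For the converse, suppose $\supp f$ is compact. Applying part (iv) of Proposition \ref{ds} to $J=I_{f}$ in $E=\Co\left(X\right)$ gives $I_{f}=\Co\left(X;\supp f\right)$. Since $X$ is Hausdorff, the compact set $\supp f$ is closed, hence clopen. Thus for any $h\in\Co\left(X\right)$ both $h\cdot \1_{\supp f}$ and $h\cdot \1_{X\backslash\supp f}$ are continuous (by clopenness), realizing the decomposition $\Co\left(X\right)=\Co\left(X;\supp f\right)+\Co\left(X;X\backslash\supp f\right)$; since these two ideals are disjoint, $I_{f}$ is a projection band. Alternatively one can invoke Proposition \ref{ur}(ii) via its $E=\Co\left(X\right)$ clause.

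The only non-routine step is the choice $g=\sqrt{\left|f\right|}$ in the forward direction: the principal ideal $I_{f}$ imposes a growth bound $\left|g\right|\lesssim \left|f\right|$ that the closed ideal $\Co\left(X;\supp f\right)$ does not impose near the points of $\overline{\supp f}\backslash \supp f$, and the square root cleanly exploits this gap.
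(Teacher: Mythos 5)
Your proof is correct and follows essentially the same route as the paper: the forward direction hinges on the same key witness $\sqrt{\left|f\right|}\in\Co\left(X;\supp f\right)$, whose membership in $I_{f}$ forces $\left|f\right|\ge s^{-2}$ on $\supp f$ (the paper concludes directly that $\supp f=\left|f\right|^{-1}\left[s^{-2},+\infty\right)$ is closed rather than arguing by contradiction with a net, but this is cosmetic), and the converse is likewise obtained from part (iv) of Proposition \ref{ds} together with the clopen decomposition.
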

\begin{proof}
The converse clause follows immediately from part (iv) of Proposition \ref{ds}. Let us prove the main claim. We have that $\overline{I_{f}}=\Co\left(X;U\right)$, where $U=\supp f$, which is open. Assume that $\Co\left(X;U\right)\subset I_{f}$. Since $\sqrt{\left|f\right|}\in \Co\left(X;U\right)$ there is $s>0$ such that $\sqrt{\left|f\right|}\le s\left|f\right|$. Hence, if $x\in U$, then $s\sqrt{\left|f\left(x\right)\right|}\ge 1$, and so $U=\left|f\right|^{-1}\left[\frac{1}{s^{2}},+\8\right)$ is closed.
\end{proof}

We proceed with a characterization of the order dense ideals in sublattices of $\Co\left(X\right)$.

\begin{proposition}\label{rod}Let $E\subset\Co\left(X\right)$ be a sublattice. Then:
\item[(i)] If $H$ is an ideal in $E$ such that $\supp H$ is dense in $\supp E$, then $H$ is order dense in $E$.
\item[(ii)] If $E$ is regular, and $H$ is an order dense ideal in $E$, then $\supp H$ is dense in $\supp E$.
\item[(iii)] If $E$ is not regular, and either $E\subset \Co_{b}\left(X\right)$ or $X$ is almost locally compact, then there is an order dense ideal $H$ of $E$ such that $\supp H$ is not dense in $\supp E$.
\end{proposition}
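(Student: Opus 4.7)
The plan is direct. Given a nonzero $f \in E_+$, the open set $\supp f$ is a nonempty subset of $\supp E$, so by density of $\supp H$ in $\supp E$ there are $h \in H$ and $x \in \supp f$ with $h(x) \neq 0$. Because $H$ is an ideal, $|h| \wedge f \in H \cap [0,f]$, and at $x$ it takes the positive value $\min(|h(x)|, f(x))$. This exhibits a nonzero element of $H \cap (0,f]$, and order density follows from the definition recalled at the end of Section~1.

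\textbf{Part (ii).} I would argue contrapositively. If $\supp H$ is not dense in $\supp E$, choose a nonempty open $U \subset \supp E$ with $U \cap \supp H = \varnothing$; every $h \in H$ vanishes on $U$. Pick $x_0 \in U$ and $e \in E_+$ with $e(x_0) > 0$. By the Tychonoff property of $X$, select $\varphi \in \Co(X)$ with $0 \le \varphi \le 1$, $\varphi(x_0) = 0$, and $\varphi \equiv 1$ off some open neighbourhood $V \subset U$ of $x_0$. Set $g := \varphi e \in \Co(X)$. Then $g \le e$ with $g(x_0) < e(x_0)$, and $g \ge h$ for every $h \in H \cap [0,e]$: on $X \setminus V$ we have $g = e \ge h$, while on $V \subset U$ we have $h = 0 \le g$. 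So $g$ is an upper bound of $H \cap [0,e]$ in $\Co(X)$ strictly below $e$ at $x_0$, whence $\bigvee_{\Co(X)}(H \cap [0,e]) \neq e$. Regularity of $E \subset \Co(X)$ then forces $\bigvee_{E}(H \cap [0,e]) \neq e$, contradicting order density of $H$ via the Archimedean characterization.

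\textbf{Part (iii) --- the main obstacle.} The plan is to use non-regularity of $E$ in $\Co(X)$ to produce a directed set $R \subset E_+$ and an $r \in E_+$ with $r = \bigvee_{E} R$ but $\bigvee_{\Co(X)} R \neq r$ (possibly not existing). The pointwise supremum $s = \sup_{\rho \in R} \rho$ is lower semi-continuous with $s \le r$, and $s(x_0) < r(x_0)$ at some $x_0$. I would take $H$ to be the ideal generated in $E$ by $R$, so that $\supp H = \bigcup_{\rho \in R} \supp \rho = \{s > 0\}$, and aim to exhibit a nonempty open subset of $\supp E$ sitting inside $\{s = 0\}$ but meeting $\supp r$, thereby keeping $\supp H$ away from it.

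Two points make this delicate. First, extracting an honest open set inside $\{s = 0\} \cap \supp E$ requires adjusting $R$ so that its pointwise supremum actually vanishes on a patch where $r$ is positive; the LSC structure of $s$ does not guarantee this automatically and one may need to split into cases (defect at an isolated point versus on a set with interior). Second, order density of $H$ in $E$ reduces, via the $|h| \wedge f$ construction of part (i), to the statement that no nonzero $f \in E_+$ is supported entirely in the chosen open patch; this is exactly where the dichotomy in the hypothesis enters. For $E \subset \Co_b(X)$ one should invoke the norm-dense Urysohn lemma (Proposition~\ref{dul}) to normalize and separate witnesses inside $E$; for $X$ almost locally compact one uses $\Co_{00}\left(X\right)$-style bumps as in the argument following Lemma~\ref{ssup}. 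I expect the technical crux to be calibrating the construction of $R$, of the ``defect'' open set, and of the bump approximation simultaneously, so that the ideal $H$ is both order dense and supported away from the chosen patch under each of the two hypotheses.
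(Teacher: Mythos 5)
Parts (i) and (ii) of your proposal are correct. Part (i) is the paper's own argument. For part (ii) you argue straight from the definition of regularity by manufacturing the continuous upper bound $\varphi e$ of $H\cap\left[\0,e\right]$ that is strictly smaller than $e$ at $x_{0}$, whereas the paper instead invokes the characterization of regularity from \cite[Proposition 5.3]{erz} (namely that $E$ is regular iff $\bigwedge_{E}\left\{g\in E,~g\ge \1_{U}\right\}\ne\0$ for every nonempty open $U$). Your route is self-contained and equally valid; note only that $\varphi e$ uses multiplication in $\Co\left(X\right)$, which is harmless since you only need an upper bound in the ambient lattice, not in $E$.

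Part (iii) contains a genuine gap, which you have located but not closed. A generic witness of non-regularity --- a directed $R\subset E_{+}$ with $r=\bigvee_{E}R$ but $\bigvee_{\Co\left(X\right)}R\ne r$ --- gives you an open set on which the pointwise supremum $s$ stays below $r$, but gives you no control whatsoever on $\left\{s=0\right\}$: the elements of $R$ may all be strictly positive on a dense set, in which case the ideal $I\left(R\right)$ you propose to take as $H$ has $\supp H$ dense in $\supp E$ and the construction produces nothing. Converting a pointwise defect into an honest open set on which a suitable order dense ideal vanishes is precisely the content of the quoted characterization of regularity, which hands you directly a nonempty open $U$ with $\bigwedge_{E}\left\{g\in E,~g\ge \1_{U}\right\}=\0$; the paper starts from there. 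The second unresolved point is the verification of order density. The correct ideal is not $I\left(R\right)$ but $H:=E\left(X\backslash \overline{U}\right)$ (or $E\left(X\backslash\overline{V}\right)$ for a relatively compact $V\subset U$ in the almost locally compact case), whose support manifestly misses $U$; order density is then checked by showing $\bigwedge_{E}\left(f-H\right)^{+}=\0$ for each $f\in E_{+}$: after normalizing so that $f\le\1$ on $U$ --- this is exactly where the hypothesis $E\subset\Co_{b}\left(X\right)$, or boundedness of $f$ on the compact $\overline{V}$, enters --- each $\left(f-g\right)^{+}$ with $g\ge\1_{U}$ vanishes on $\overline{U}$, hence lies in $H$, and these elements have infimum $\0$ because $f-\left(f-g\right)^{+}=f\wedge g\le g$ and the $g$'s have infimum $\0$. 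Your sketch instead gestures at Proposition \ref{dul}, which is not available here ($E$ is not assumed norm-dense in $\Co_{b}\left(X\right)$), and at $\Co_{00}\left(X\right)$-style bumps; neither the normalization $\|f\|_{U}\le 1$ nor the tail functions $\left(f-g\right)^{+}$, which are the actual mechanism, appear in your outline.
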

\begin{proof}
(i): Assume that $\supp H$ is dense in $\supp E$, and let $e\in E_{+}\backslash\left\{\0\right\}$. Then, $\supp e\cap \supp H\ne\varnothing$, and so there is $h\in H_{+}$ such that $\supp e\cap \supp h\ne\varnothing$. Then $g=e\wedge h\in\left(\0,e\right]\cap H$, and so $H$ is order dense in $E$.\medskip

It was proven in \cite[Proposition 5.3]{erz} that $E$ is regular if and only if $\bigwedge_{E}\left\{f\in E,~f\ge \1_{U}\right\}\ne\0$ (this includes this set being empty, or having no infimum), for every nonempty open $U\subset X$.\medskip

(ii): Assume that $\supp H$ is not dense in $\supp E$, so that $U:=\supp E\backslash \overline{\supp H}\ne\varnothing$. Let $f\in E_{+}$ be such that $\supp f\cap U\ne\varnothing$. By scaling $f$ if needed we may assume that $f\ge \1_{V}$, where $V$ is an nonempty open subset of $\supp f\cap U$. Since $H$ is order dense, it follows that $\bigvee_{E} \left(f\wedge H\right)=f$, or equivalently, $\bigwedge_{E} \left(f-H\right)^{+}=\0$. On the other hand, since every element of $H$ vanishes on $U$ (and hence on $V$), it follows that $\left(f-H\right)^{+}\subset \left\{g\in E,~g\ge \1_{V}\right\}$, and so by the result quoted above, the infimum cannot be $\0$. Thus, we have reached a contradiction.\medskip

(iii): If $E$ is not regular, there is a nonempty open $U\subset X$ such that $\bigwedge_{E}\left\{g\in E,~g\ge \1_{U}\right\}=\0$; in particular the set is nonempty, and so $U\subset \supp E$. If $E\subset \Co_{b}\left(X\right)$, every element of $E$ is bounded on $U$. Let us show that $H:=E\left(X\backslash \overline{U}\right)$ is order dense in $E$. Let $f\in E_{+}$; we may assume that $\|f\|\le 1$. Therefore, if $g\in E$ is such that $g\ge \1_{U}$, then $\left(f-g\right)^{+}$ vanishes on $U$, hence on $\overline{U}$, and so $\left(f-g\right)^{+}\in H$. It then follows that $\bigwedge_{E} \left(f-H\right)^{+}\le \bigwedge_{E} \left\{\left(f-\left(f-g\right)^{+}\right)^{+},~g\in E,~g\ge \1_{U}\right\}=\bigwedge_{E} \left\{f\wedge g,~g\in E,~g\ge \1_{U}\right\}=\0$.\medskip

If $X$ is almost locally compact, there is an nonempty open $V\subset U$, such that $\overline{V}$ is compact. Then, every element of $E$ is bounded on $V$, and using the same arguments as in the first case, one can then show that $H:=E\left(X\backslash \overline{V}\right)$ is order dense in $E$.
\end{proof}

\begin{question}
Can the additional assumptions be removed from part (iii)? In other words, does the converse to (ii) hold?
\end{question}

The following lemma is inspired by \cite{abr}, \cite{ber} and \cite{vg} and will come very handy later. We will call it the ABVG Lemma.

\begin{lemma}\label{bern}
Let $K$ be compact Hausdorff, let $F\subset\Co\left(K\right)$ be a dense sublattice and let $H\subset F$ be the ideal generated by an increasing sequence $\left(h_{n}\right)_{n\in\N}$. For every $f\in F_{+}$, there is a sequence $\left(f_{n}\right)_{n\in\N}\subset H_{+}$ with $g_{n}:=\sum\limits_{k=1}^{n}f_{k}$ such that:
\begin{itemize}
\item $f_{n}\bot f_{m}$, for $\left|m-n\right|>1$;
\item $ f-g_{n}\bot f_{m}$, for every $m<n$;
\item $0\le f-g_{n}\le \left(f-h_{n}\right)^{+}+\frac{1}{n}\1$, for every $n\in\N$;
\item If $g\ge g_{n}$, for every $n\in\N$, then $\left(f-g\right)^{+}\bot H$.
\end{itemize}
\end{lemma}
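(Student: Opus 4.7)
My plan is to construct $(f_n)$ inductively so that the cozero sets $A_n := \supp f_n$ satisfy the ``ladder'' pattern $A_k \cap A_m = \varnothing$ for $|k - m| > 1$, and $g_n := f_1 + \cdots + f_n$ lies in $H_+$, stays below $f$, and stays above $f \wedge h_n - \tfrac{1}{n}\1$. The key tools are the Urysohn-type results of Proposition \ref{dul} and Corollary \ref{ssul}, available because $F$ is dense in the Banach lattice $\Co(K)$.

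\smallskip

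Start with $f_1 = f \wedge h_1$. For the inductive step, assume $f_1,\dots,f_{n-1}$ satisfy the first three conditions up to level $n-1$; note in particular that by the second condition, $f-g_{n-1}$ vanishes on $\overline{A_1 \cup \cdots \cup A_{n-2}}$. I construct $f_n = e_n + d_n$ where $e_n,d_n\in F_+$ have disjoint cozero sets. The ``gluing'' piece $e_n$ is produced by applying Corollary \ref{ssul} with the compact sets $\overline{A_{n-1}}$ (on which $e_n = f - g_{n-1}$) and a compact set containing $\overline{A_1 \cup \cdots \cup A_{n-2}} \setminus \overline{A_{n-1}}$ (on which $e_n = 0$); these requirements are consistent since $f - g_{n-1}$ vanishes on $\overline{A_1 \cup \cdots \cup A_{n-2}} \cap \overline{A_{n-1}}$ by the inductive hypothesis. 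Moreover, I arrange $e_n$ to be supported in a small open neighborhood $V$ of $\overline{A_{n-1}}$ with $\overline{V} \subset \supp f_{n-1}$, so that $f_{n-1} \ge \delta > 0$ on $\overline{V}$; this forces $e_n \le (\|f - g_{n-1}\|/\delta)\, f_{n-1}$, whence $e_n \in I(f_{n-1}) \subset H$. The ``advancing'' piece $d_n$ is produced similarly: using Corollary \ref{ssul}, $d_n$ is arranged to be supported in $K \setminus \overline{V \cup A_1 \cup \cdots \cup A_{n-2}}$, bounded above by $h_n$ (hence $d_n \in I(h_n) \subset H$), and large enough (by density of $F$ in $\Co(K)$) that $d_n \ge f \wedge h_n - \tfrac{1}{n}\1$ on the allowed region. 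Combining, $f_n \in H_+$, the support design gives $f_n \bot f_k$ for $k \le n-2$, the equality $g_n = f$ on $\overline{A_{n-1}}$ (from $e_n$, as $d_n = 0$ there) combined with $f_n = 0$ and $g_{n-1} = f$ on $A_k$ for $k \le n-2$ gives $(f - g_n) \bot f_k$ for every $k < n$, and $g_n \ge f \wedge h_n - \tfrac{1}{n}\1$ holds everywhere, giving the third bullet.

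\smallskip

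The main obstacle is the fourth bullet. Given $g \ge g_n$ for every $n$, set $\phi := (f - g)^+ \wedge h_m$ for a fixed $m$; I need $\phi = 0$. Since $\phi \le f - g_n$ and $(f - g_n) \bot f_k$ for $k < n$, one gets $\phi \bot f_k$ for every $k$, so $\phi$ vanishes on $\bigcup_k A_k$. Suppose for contradiction $\phi(x) > 0$ for some $x$. Then $h_m(x) > 0$, $f(x) > 0$, and $x \notin A_k$ for any $k$. For $n \ge m$, $h_n(x) \ge h_m(x) > 0$, and since $g_{n-1}(x) = 0$, the lower bound on $d_n$ from the construction gives $f_n(x) \ge f(x) \wedge h_n(x) - \tfrac{1}{n} > 0$ for $n$ sufficiently large, contradicting $x \notin A_n$. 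Hence $\phi = 0$, which proves $(f - g)^+ \bot H$.
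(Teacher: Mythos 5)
Your overall architecture --- an inductive construction of $f_n$ with a ladder of supports and a split into a ``gluing'' and an ``advancing'' piece --- is in the spirit of the intended argument, but the induction is anchored on the wrong objects and the gluing step cannot be carried out. Two concrete problems. First, you require $V$ to be an open neighborhood of $\overline{A_{n-1}}$ with $\overline{V}\subset\supp f_{n-1}=A_{n-1}$; since then $A_{n-1}\subset\overline{A_{n-1}}\subset V\subset\overline{V}\subset A_{n-1}$, this forces $A_{n-1}$ to be clopen, which fails already for $K=\left[0,1\right]$. Second, and more fundamentally, $e_n$ is supposed to agree with $f-g_{n-1}$ on all of $\overline{A_{n-1}}$ while lying in $H$ (indeed, while being dominated by a multiple of $f_{n-1}$). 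But $f-g_{n-1}$ need not vanish on the boundary of $A_{n-1}$, whereas $f_{n-1}$ does, and that boundary may even lie outside $\supp H$. Concretely, take $K=\left[0,1\right]$, $F=\Co\left(K\right)$, $h_n\left(t\right)=\left(\frac{1}{2}-t\right)^{+}$ for all $n$, and $f=\1$: then $A_1=\supp\left(f\wedge h_1\right)=\left[0,\frac{1}{2}\right)$, every element of $H=I_{h_1}$ vanishes at $\frac{1}{2}\in\overline{A_1}$, yet $\left(f-g_1\right)\left(\frac{1}{2}\right)=\left(f-h_1\right)^{+}\left(\frac{1}{2}\right)=1$, so no $e_2\in H$ can coincide with $f-g_1$ on $\overline{A_1}$. (A smaller issue: Corollary \ref{ssul} prescribes values on two \emph{disjoint} compacta, whereas your two sets overlap; that point could be patched by taking a minimum with an auxiliary function, but the two problems above cannot.)

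The repair is to anchor the induction not on the supports $A_k$ of the functions already built, but on a fixed exhaustion of $\supp H$ by compacta determined by the generators: $K_n:=h_n^{-1}\left[\frac{1}{n},+\8\right)$ satisfies $K_n\subset\Int K_{n+1}$ and $\bigcup\limits_{n\in\N} K_n=\supp H$. One then chooses, in a single stroke (no gluing/advancing split needed), $f_n\in H\cap\left[\0,f-g_{n-1}\right]$ which equals $f-g_{n-1}$ on $K_n$ and vanishes outside $\Int K_{n+1}$; this is exactly what Remark \ref{urur} and Corollary \ref{dsi} provide, and membership in $H$ is automatic from part (iv) of Proposition \ref{ds} because $\overline{\supp f_n}\subset K_{n+1}$ is a compact subset of $\supp H$. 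Then $g_n=f$ on $K_n$, so $\supp f_{n+1}\subset K_{n+2}\backslash K_n$, which yields the two disjointness bullets; $f\left(x\right)>g_n\left(x\right)$ forces $x\notin K_n$, i.e.\ $h_n\left(x\right)<\frac{1}{n}$, which yields the third bullet; and any $x\in\supp\left(f-g\right)^{+}$ with $g\ge g_n$ for all $n$ lies outside every $K_n$, hence outside $\supp H$, which yields the fourth.
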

\begin{proof}
Define $K_{n}=h_{n}^{-1}\left[\frac{1}{n},+\8\right)$, so that $K_{n}\subset h_{n}^{-1}\left(\frac{1}{n+1},+\8\right)\subset h_{n+1}^{-1}\left(\frac{1}{n+1},+\8\right)\subset \Int K_{n+1}$ and $\bigcup\limits_{n\in\N} K_{n}=\supp~ H$. Also, let $K_{0}=\varnothing$ and define $g_{0}=\0$. From Corollary \ref{dsi}, for every $n\in\N$ there is $f_{n}\in H\cap \left[\0,f-g_{n-1}\right]$, which coincides with $f-g_{n-1}$ on $K_{n}$ and vanishes outside of $\Int K_{n+1}$. It then follows that $g_{n}$ coincides with $f$ on $K_{n}$, and so $\supp f_{n}\subset K_{n+1}\backslash K_{n-1}$. Hence, $f_{n}\bot f_{m}$, for $\left|m-n\right|>1$, and $0\le f-g_{n}\bot f_{m}$, for every $m<n$.

Now $f\left(x\right)>g_{n}\left(x\right)$ implies $x\notin K_{n}$, hence $h_{n}\left(x\right)<\frac{1}{n}$, therefore $f\left(x\right)-g_{n}\left(x\right)\le f\left(x\right)\le \left(f\left(x\right)-h_{n}\left(x\right)\right)^{+}+\frac{1}{n}$. If $g\ge g_{n}$ and $x\in \supp \left(f-g\right)^{+}$, then $f\left(x\right)>g_{n}\left(x\right)$, hence $x\notin K_{n}$, for every $n\in\N$; it follows that $x\notin\supp H$, and so $\left(f-g\right)^{+}\bot H$.
\end{proof}

\begin{remark}
Note that the result is true if $K$ is merely locally compact and $F\subset\Co\left(K\right)$ is a Urysohn sublattice.
\end{remark}

\section{Projection bands in vector lattices}\label{p}

In this section we prove some criterions for an ideal in a vector lattice to be a projection band.

\subsection{Locality of projection bands}

Everywhere in this section $F$ is a vector lattice. Let us show that an ideal is a projection band if and only if it is locally a projection band.

\begin{proposition}\label{pb}If $H\in \mathcal{I}_{F}$ and $g\in F$, then $H\cap I_{g}$ is a projection band in $I_{g}$ if and only if $I_{g}\cap H+I_{g}\cap H^{d}=I_{g}$, and if and only if $g\in H+H^{d}$. Hence, $H$ is a projection band if and only if $H\cap I_{g}$ is a projection band in $I_{g}$, for every $g\in G$, where $G\subset F$ is such that $G\cup H$ is majorizing.
\end{proposition}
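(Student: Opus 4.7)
The plan is to prove the two local equivalences directly and then leverage them, together with the fact that $H+H^{d}$ is itself an ideal of $F$ (by Proposition \ref{lid}, as the join of $H$ and $H^{d}$ in $\mathcal{I}_{F}$), to deduce the global claim. The first step I would carry out is to establish the identity $\left(H\cap I_{g}\right)^{d}_{I_{g}}=H^{d}\cap I_{g}$, which is a direct instance of Corollary \ref{rel}(i). The inclusion $\supset$ is immediate; for $\subset$, I would take $f\in I_{g}$ with $f\bot\left(H\cap I_{g}\right)$ and an arbitrary $h\in H$, and consider $\left|f\right|\wedge\left|h\right|$: it lies in $H$ (dominated by $\left|h\right|$, with $H$ solid) and in $I_{g}$ (dominated by $\left|f\right|$), hence in $H\cap I_{g}$. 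The disjointness assumption then forces $\left|f\right|\wedge\left|h\right|=0_{F}$, so $f\bot h$ for every $h\in H$, i.e.\ $f\in H^{d}$. Feeding this identity into the projection-band criterion $I_{g}=\left(H\cap I_{g}\right)+\left(H\cap I_{g}\right)^{d}_{I_{g}}$ yields the first equivalence at once.

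For the equivalence with $g\in H+H^{d}$, one direction is trivial since $g\in I_{g}=\left(I_{g}\cap H\right)+\left(I_{g}\cap H^{d}\right)\subset H+H^{d}$. For the converse, $g\in H+H^{d}$ implies $\left|g\right|\in H+H^{d}$ by solidity, and Proposition \ref{lid} then writes $\left|g\right|=h+h'$ with $h\in H_{+}$ and $h'\in H^{d}_{+}$. Both summands are dominated by $\left|g\right|$, so they belong to $I_{g}\cap H$ and $I_{g}\cap H^{d}$ respectively; thus $\left|g\right|$, and hence $g$, lies in the ideal $\left(I_{g}\cap H\right)+\left(I_{g}\cap H^{d}\right)\subset I_{g}$, and minimality of $I_{g}$ forces equality.

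The ``hence'' part is then immediate: if $H$ is a projection band, every $g\in F$ (in particular every $g\in G$) lies in $F=H+H^{d}$, making $H\cap I_{g}$ a projection band in $I_{g}$. Conversely, the local hypothesis yields $G\subset H+H^{d}$, and combined with the trivial $H\subset H+H^{d}$ this gives $G\cup H\subset H+H^{d}$; since $H+H^{d}$ is an ideal and $I\left(G\cup H\right)=F$ by the majorizing assumption, one concludes $F\subset H+H^{d}$, so $H$ is a projection band. The only delicate point in the whole argument is the identity $\left(H\cap I_{g}\right)^{d}_{I_{g}}=H^{d}\cap I_{g}$; once that is in hand, everything else is a routine combination of Proposition \ref{lid} with the minimality property of $I_{g}$.
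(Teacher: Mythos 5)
Your proposal is correct and follows essentially the same route as the paper's proof: establish $\left(H\cap I_{g}\right)^{d}_{I_{g}}=H^{d}\cap I_{g}$ (the paper cites Corollary \ref{rel}(i) where you argue it directly), deduce the first equivalence, get the second from the fact that $H+H^{d}$ is an ideal containing $g$ and hence $I_{g}$, and conclude the global statement from the majorizing hypothesis. Your explicit decomposition $\left|g\right|=h+h'$ with $h,h'\le\left|g\right|$ is just a spelled-out version of the paper's identity $I_{g}\cap\left(H+H^{d}\right)=I_{g}\cap H+I_{g}\cap H^{d}$, so there is nothing substantively different to flag.
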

\begin{proof}
It follows from part (i) of Corollary \ref{rel} that $\left(I_{g}\cap H\right)^{d}_{I_{g}}=I_{g}\cap H^{d}$. Hence, $H\cap I_{g}$ is a projection band in $I_{g}$ if and only if $I_{g}\cap H+I_{g}\cap H^{d}=I_{g}$. Clearly, these conditions imply that $g\in H+H^{d}$. On the other hand, if $g\in H+H^{d}$, then $H+H^{d}$ is an ideal that contains $g$, hence contains $I_{g}$, and so $I_{g}\cap H+I_{g}\cap H^{d}=I_{g}\cap \left(H+H^{d}\right)=I_{g}$.
\end{proof}

\begin{corollary}\label{pb2}
Assume that $F$ is Archimedean, and let $E$ be an order dense and majorizing sublattice of $F$. Let $H\subset F$ be an ideal such that $G=E\cap H$ is a projection band in $E$. Then, $H$ is a projection band in $F$.
\end{corollary}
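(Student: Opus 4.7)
My plan is to apply Proposition \ref{pb} with the set $E$ taking the role of the ``testing set'' $G$: since $E$ is majorizing, $E\cup H$ is majorizing, so it suffices to prove that every $e\in E$ lies in $H+H^{d}$. The natural decomposition is the one provided by the hypothesis that $G=E\cap H$ is a projection band in $E$: writing $e=g+k$ with $g\in G$ and $k\in G_{E}^{d}$, the component $g$ is already in $H$, so everything reduces to showing that the ``$E$-disjoint complement'' of $G$ sits inside the ``$F$-disjoint complement'' of $H$, i.e.\ $G_{E}^{d}\subset H^{d}$.

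The heart of the argument is this inclusion, and it is where order density of $E$ in $F$ enters. Given $k\in G_{E}^{d}$ and $h\in H_{+}$, I want to show $\left|k\right|\wedge h=0_{F}$. The element $\left|k\right|\wedge h$ lies in $F_{+}$ and is dominated both by $h\in H$ (so it is in $H$, since $H$ is solid) and by $\left|k\right|\in E$. For any $e'\in E$ with $0_{F}\le e'\le\left|k\right|\wedge h$, solidness of $H$ and membership in $E$ yield $e'\in E\cap H=G$, while $e'\le\left|k\right|$ together with $k\in G_{E}^{d}$ forces $e'=\left|k\right|\wedge e'=0_{F}$. Thus $E\cap\left[0_{F},\left|k\right|\wedge h\right]=\left\{0_{F}\right\}$, and order density of $E$ in $F$ then forces $\left|k\right|\wedge h=0_{F}$. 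Since $h\in H_{+}$ was arbitrary and $H$ is solid, $k\in H^{d}$.

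With the inclusion $G_{E}^{d}\subset H^{d}$ in hand, the decomposition $e=g+k\in G+G_{E}^{d}\subset H+H^{d}$ gives $E\subset H+H^{d}$. By Proposition \ref{lid}, $H+H^{d}$ is itself an ideal of $F$; since it contains the majorizing sublattice $E$, it contains $I\left(E\right)=F$, hence equals $F$. This is exactly the definition of $H$ being a projection band.

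I expect the main (and only real) obstacle to be the disjointness step $G_{E}^{d}\subset H^{d}$: one needs to convert the $E$-internal disjointness information about $k$ into $F$-disjointness with arbitrary elements of $H$, and that conversion is precisely what order density of $E$ in $F$ buys. The rest is bookkeeping via Proposition \ref{pb} and the fact that $H+H^{d}$ is an ideal.
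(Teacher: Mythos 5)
Your proof is correct and follows essentially the same route as the paper: both reduce everything to the inclusion $G_{E}^{d}\subset H^{d}$, obtain $E=G+G_{E}^{d}\subset H+H^{d}$, and finish by noting that the ideal $H+H^{d}$ contains the majorizing sublattice $E$. The only cosmetic difference is in the disjointness step, where the paper invokes $h=\bigvee\left(G\cap\left[0_{F},h\right]\right)$ while you argue directly that $E\cap\left(0_{F},\left|k\right|\wedge h\right]=\varnothing$; these are two equivalent uses of order density.
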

\begin{proof}
First, $H^{d}\cap E\subset G^{d}_{E}$. On the other hand, if $e\in G^{d}_{E}$ and $h\in H_{+}$, then as $E$ is order dense, $h=\bigvee\left(E\cap\left[0,h\right]\right)=\bigvee\left(G\cap\left[0,h\right]\right)$. Since $G\cap\left[0,h\right]\bot e$, we conclude $e\bot h$, and thus $G^{d}_{E}=H^{d}\cap E$. As $G$ is a projection band in $E$, we have $E=G+G^{d}_{E}= H\cap E + H^{d}\cap E\subset H+ H^{d}$. Since $E$ is majorizing, and $H+H^{d}$ is an ideal that contains $E$, it follows that $F=H+ H^{d}$.
\end{proof}

Recall that a sublattice $E\subset F$ is \emph{super order dense} if for every $f\in F_{+}$ there is a countable $G\subset E$ such that $f=\bigvee G$. The following is easy to deduce from Corollary \ref{pb2}.

\begin{corollary}
If an Archimedean vector lattice has a (super) order dense and majorizing ($\sigma$-)PP sublattice, it also has ($\sigma$-)PP.
\end{corollary}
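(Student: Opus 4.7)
\emph{Plan.} My plan for both assertions is to apply Corollary \ref{pb2}: given a band $B$ in $F$, I would show that $B\cap E$ is a band in $E$ (countably generated when $B$ is), use the assumed ($\sigma$-)PP of $E$ to upgrade it to a projection band in $E$, and then conclude via Corollary \ref{pb2} that $B$ is a projection band in $F$.

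The technical heart of both cases is an identity: for $E$ order dense in the Archimedean $F$ and any $S\subset E$, one has $S^{dd}_{E}=S^{dd}\cap E$, where the un-subscripted complements are taken in $F$. Since $S^{dd}_{E}=\left(S^{d}\cap E\right)^{d}\cap E$ by definition, this reduces to $\left(S^{d}\cap E\right)^{d}\subset S^{dd}$. If $f\ge 0_{F}$ satisfies $f\bot S^{d}\cap E$ but $f\wedge h>0_{F}$ for some $h\in S^{d}_{+}$, then order density supplies $e\in E_{+}$ with $0_{F}<e\le f\wedge h$, and solidness of $S^{d}$ puts $e\in S^{d}\cap E$, so $e\bot f$ contradicts $e\le f$.

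In the PP case I would take any band $B$ in $F$ and let $G:=B\cap E$. The identity, combined with $G\subset B=B^{dd}$, yields $G^{dd}_{E}=G^{dd}\cap E\subset G$, so $G$ is a band in $E$. In the $\sigma$-PP case, super order density implies order density (any countable $G\subset E$ with $\bigvee G=f\in F_{+}$ has $G\subset E\cap\left[0_{F},f\right]$); applied to $g_{n}^{+},g_{n}^{-}\in F_{+}$, and after replacing representatives by their positive parts to stay in $E_{+}$, it furnishes a countable $S\subset E_{+}$ with $B=\left\{g_{n}\right\}^{dd}=S^{dd}$. The identity then gives $B\cap E=S^{dd}_{E}$, a countably generated band in $E$. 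In both cases, ($\sigma$-)PP of $E$ together with Corollary \ref{pb2} conclude.

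The main obstacle is the identity $S^{dd}_{E}=S^{dd}\cap E$ itself: it can fail for arbitrary sublattices $E$ (Corollary \ref{rel}(i) requires $E$ to be an ideal), and is precisely what lets the band structure ``descend'' from $F$ to $E$.
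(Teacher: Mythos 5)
Your proposal is correct and follows exactly the route the paper intends: the paper leaves this as an easy deduction from Corollary \ref{pb2}, and your argument — showing $B\cap E$ is a ($\sigma$-generated) band in $E$ via the identity $S^{dd}_{E}=S^{dd}\cap E$ for order dense $E$, upgrading it to a projection band by ($\sigma$-)PP of $E$, and applying Corollary \ref{pb2} — is precisely that deduction with the details supplied.
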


It makes sense to consider the PPP-part of a vector lattice.

\begin{corollary}\label{pppp}
$\left\{h\in F,~ I_{h}\mbox{ has PPP}\right\}$ is an ideal in $F$, which has PPP. Same for PP and $\sigma$-PP.
\end{corollary}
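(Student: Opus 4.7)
Set $L := \{h \in F : I_h \text{ has PP}\}$; the arguments for PPP and $\sigma$-PP will be entirely parallel. Two tools drive the plan: the fact (remarked after Corollary~\ref{rel}) that PP, PPP, and $\sigma$-PP are inherited by ideals, and the local criterion of Proposition~\ref{pb}, which says that $H$ is a projection band iff $H \cap I_g$ is a projection band in $I_g$ for every $g$ in a set $G$ with $G \cup H$ majorizing.

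First I would show $L$ is an ideal by verifying the conditions of Proposition~\ref{id}(iv). Solidness is immediate: if $|f| \le |h|$ with $h \in L$, then $I_f$ is an ideal of the PP lattice $I_h$, hence inherits PP, so $f \in L$. For closure under positive addition, take $g, h \in L_+$; the sandwich $g \vee h \le g+h \le 2(g \vee h)$ together with Proposition~\ref{prin}(i) gives $I_{g+h} = I_g + I_h$, and $\{g, h\}$ is majorizing in $I_{g+h}$. For any band $B$ of $I_{g+h}$, the intersections $B \cap I_g$ and $B \cap I_h$ are bands in $I_g$ and $I_h$ (ideals are regular sublattices, so band-membership is preserved under intersection), and are projection bands there by the PP of $I_g$ and $I_h$. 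Proposition~\ref{pb} then yields that $B$ is a projection band in $I_{g+h}$, so $I_{g+h}$ has PP and $g+h \in L$.

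That $L$ itself has PP is then the same local argument one level up: for any band $B$ in $L$ and any $l \in L_+$, $B \cap I_l$ is a band in $I_l$ and hence a projection band there, and since $L_+$ is trivially majorizing in $L$, Proposition~\ref{pb} concludes that $B$ is a projection band in $L$.

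The PPP and $\sigma$-PP cases require only one extra check: when $B$ is a principal (respectively countably generated) band, its intersection $B \cap I_t$ with a principal ideal must be of the same form so that the PPP (resp.\ $\sigma$-PP) of $I_t$ can be invoked. This is supplied by Corollary~\ref{rel}(i) combined with Proposition~\ref{prin}(ii): if $B = \{e\}^{dd}$ (resp.\ $\{e_n\}^{dd}$), then $B \cap I_t = \{t \wedge |e|\}^{dd}_{I_t}$ (resp.\ $\{t \wedge |e_n|\}^{dd}_{I_t}$), which is principal (resp.\ $\sigma$-generated) in $I_t$. I do not anticipate a serious obstacle; the most delicate bookkeeping is tracking which ambient lattice each double-disjoint is computed in, but because every restriction is made to an ideal, Corollary~\ref{rel} makes the identification transparent.
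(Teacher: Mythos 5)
Your proposal is correct and takes essentially the same route as the paper: solidness via heredity of the projection properties to ideals, closure under addition of positives via the locality criterion of Proposition \ref{pb} with $\left\{g,h\right\}$ majorizing in $I_{g+h}$, and the final claim by the same locality argument with $L_{+}$ majorizing in $L$. Your identification $B\cap I_{t}=\left\{t\wedge\left|e\right|\right\}^{dd}_{I_{t}}$ via Corollary \ref{rel} is exactly the computation the paper displays (in its $\left\{f\right\}^{d}_{I_{g+h}}\cap I_{g}=\left(f\wedge g\right)^{d}_{I_{g}}$ form).
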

\begin{proof}
We will only deal with PPP. Let $H$ be the set in the claim. If $h\in H$ and $\left|f\right|\le \left|h\right|$, then $I_{f}$ is an ideal of $I_{h}$, which has PPP, from where $I_{f}$ also has PPP, and so $f\in H$. If $g,h\in H_{+}$ and $f\in I_{g+h}$, from Corollary \ref{rel} we have that
$$\left\{f\right\}^{d}_{I_{g+h}}\cap I_{g}=\left\{f\right\}^{d}\cap I_{g+h}\cap I_{g}=\left\{f\right\}^{d}\cap I_{g}=\left(I_{f}\cap I_{g}\right)^{d}_{I_{g}}=\left(f\wedge g\right)^{d}_{I_{g}}$$ is a projection band in $I_{g}$. Similarly, $\left\{f\right\}^{d}_{I_{g+h}}\cap I_{h}$ is a projection band in $I_{h}$. Since $\left\{g,h\right\}$ is a majorizing set in $I_{g+h}$, by virtue of Proposition \ref{pb} we conclude that $\left\{f\right\}^{d}_{I_{g+h}}$ is a projection band in $I_{g+h}$. Hence, $g+h\in H$, and so from Proposition \ref{id} $H$ is an ideal.\medskip

If $E$ is a principal band in $H$, it follows from part (ii) of Corollary \ref{rel} that $E\cap I_{g}$ is a principal (and hence projection) band in $I_{g}$, for every $g\in H_{+}$. Hence, according to Proposition \ref{pb}, $E$ is a projection band in $H$, and so $H$ has PPP.
\end{proof}

\begin{remark}
Note that $H:=\left\{h\in F,~ I_{h}\mbox{ has PP}\right\}$ is not necessarily a band. Indeed, $c_{0}$ is an order complete order dense ideal of $c$. If $H$ was always a band, it would have to contain $c_{0}$, hence be equal to $c$. However, $c$ does not have PPP. This can be seen directly, or using Theorem \ref{bd} below, after noticing that $2\N$ is a cozero subset of the one point compactification  $\N_{\8}$ of $\N$, whose closure is not open.
\end{remark}

\begin{theorem}\label{infd}
If $F$ is Archimedean, an ideal $H$ is a projection band if and only if $\mathcal{I}_{F}$ is infinite-meet distributive at $H$, i.e. $\bigcap\limits_{J\in \mathcal{J}}\left(H+ J\right)\subset H+ \bigcap \mathcal{J}$, for any $\mathcal{J}\subset \mathcal{I}_{F}$.
\end{theorem}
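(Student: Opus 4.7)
For the forward direction, let $P:F\to H$ be the band projection; since $P$ is a lattice homomorphism with $|Pg|=P|g|\le|g|$, the solidness of any ideal $J\in\mathcal{I}_F$ forces $Pj\in J$ whenever $j\in J$. Given $\mathcal{J}\subset\mathcal{I}_F$ and $f\in\bigcap_{J\in\mathcal{J}}(H+J)$, write $f=h_J+j_J$ with $h_J\in H$, $j_J\in J$ to get $f-Pf=j_J-Pj_J\in J$ for every $J\in\mathcal{J}$, so $f-Pf\in\bigcap\mathcal{J}$ and $f=Pf+(f-Pf)\in H+\bigcap\mathcal{J}$.

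For the converse, my plan combines the localization provided by Proposition \ref{pb} with the Krein--Kakutani representation. By Proposition \ref{pb}, it suffices to show $f\in H+H^d$ for every $f\in F_+$. I would first check that infinite-meet distributivity descends to $\mathcal{I}_{I_f}$ at $\tilde H:=H\cap I_f$: every ideal $J'$ of $I_f$ is automatically an ideal of $F$, and the identity $I_f\cap(H+J')=\tilde H+J'$ holds because if $g=h+j$ with $g,j\in I_f$ then $h=g-j\in I_f$. Intersecting the hypothesis (applied to $\mathcal{J}'\subset\mathcal{I}_{I_f}$ viewed as a subset of $\mathcal{I}_F$) with $I_f$ then yields the required distributivity at $\tilde H$ inside $\mathcal{I}_{I_f}$.

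Next, realize $I_f$ as a dense sublattice of $\Co(K)$ via Krein--Kakutani, with $f$ corresponding to $\1$. Apply the hypothesis inside $I_f$ to the collection $\mathcal{J}^*:=\{I_{\1-h}:h\in\tilde H\cap[0,\1]\}$; since $\1=h+(\1-h)\in\tilde H+I_{\1-h}$ for each such $h$, one obtains a decomposition $\1=h_0+j_0$ with $h_0\in\tilde H$ and $j_0\in\bigcap_h I_{\1-h}$. It remains to check $j_0\in H^d$. For $h\in H_+$ and every $k\in\N$ the element $kh\wedge\1$ lies in $\tilde H\cap[0,\1]$, and the identity $\1-kh\wedge\1=(\1-kh)^+$ gives $|j_0|\le N_k(\1-kh)^+$ for some $N_k\in\N$. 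Working pointwise in $K$: if $h(x)>0$, choosing $k>1/h(x)$ forces $(\1-kh)^+(x)=0$ and hence $j_0(x)=0$, so $j_0$ vanishes on $\supp h$ and $j_0\wedge h=0$ in $\Co(K)$, hence in $F$. A brief solidness argument using $|j_0|\le Mf$ (valid since $j_0\in I_f$) extends this disjointness to arbitrary $h\in H_+$ in the ambient $F$, giving $j_0\in H^d$ and $f=h_0+j_0\in H+H^d$.

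The main obstacle is precisely the step concluding $j_0\wedge h=0$ from $|j_0|\le N_k(\1-kh)^+$: in an abstract Archimedean vector lattice without PPP the multipliers $N_k$ depend on $k$ and the descending family $N_k(\1-kh)^+$ need not have zero infimum, so one cannot extract disjointness directly. The Krein--Kakutani embedding converts this Archimedean assertion into a genuine pointwise statement on $K$, which is why the localization to $I_f$ is indispensable.
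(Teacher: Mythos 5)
Your proof is correct, and while the forward direction is essentially the paper's argument (the paper bounds $h_{J}\le Pf$ via $Pf=\bigvee\left(\left[0_{F},f\right]\cap H\right)$ and uses solidity of $J$; you instead use that $P$ maps each ideal into itself, which lets you skip the reduction to positive $f$), your converse takes a genuinely different route within the same skeleton. Both proofs localize to $I_{f}$ via Proposition \ref{pb}, represent $I_{f}$ as a dense sublattice of $\Co\left(K\right)$ with $f=\1$, and apply the hypothesis to a family $\mathcal{J}$ of principal ideals satisfying $\1\in H+J$ for each $J\in\mathcal{J}$ and $\bigcap\mathcal{J}\subset H^{d}$; the difference is in the choice of $\mathcal{J}$. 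The paper indexes its family by points $x\in\supp H$ and constructs each $g_{x}$ (vanishing at $x$, with $H+I_{g_{x}}=I_{f}$) using the norm-dense Urysohn lemma and Remark \ref{urur}, so the work lies in proving $H+I_{g_{x}}=I_{f}$, while $\bigcap_{x}I_{g_{x}}\bot H$ is immediate. Your family $\left\{I_{\1-h},~h\in \tilde{H}\cap\left[0,\1\right]\right\}$ makes the membership $\1\in\tilde{H}+I_{\1-h}$ trivial and shifts all the work to showing $\bigcap_{h}I_{\1-h}\bot H$, which your pointwise argument with $kh\wedge\1$ (together with the solidness step passing from $\tilde{H}_{+}$ to $H_{+}$) handles correctly; the benefit is that you bypass the Urysohn machinery of Section \ref{s} entirely. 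One remark: your closing claim that the disjointness cannot be extracted abstractly is overstated. From $j_{0}\in I_{\left(\1-kh\right)^{+}}$ one gets $j_{0}\bot\left(kh-\1\right)^{+}=k\left(h-\frac{1}{k}\1\right)^{+}$, whence $\left|j_{0}\right|\wedge h\le \left|j_{0}\right|\wedge\left(h-\frac{1}{k}\1\right)^{+}+\left|j_{0}\right|\wedge\frac{1}{k}\1\le\frac{1}{k}\1$ for every $k$, and Archimedeanity gives $\left|j_{0}\right|\wedge h=0_{F}$ with no representation needed; so your argument could in fact be made representation-free, though as written it is perfectly sound.
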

\begin{proof}
Assume that $H$ is a projection band with the projection $P$ and let $0\le f\in \bigcap\limits_{J\in \mathcal{J}}\left(H+ J\right)$. Then, for every $J\in \mathcal{J}$ there are $g_{J}\in J_{+}$ and $h_{J}\in H_{+}$ such that $g_{J}+ h_{J}=f$. Therefore, $\left\{h_{J},~ J\in \mathcal{J}\right\}\subset \left[0,f\right]\cap H$, and so $h_{J}\le Pf$, for every $J\in \mathcal{J}$; hence, $0\le f-Pf\le f-h_{J}= g_{J}\in J$, from where $f-Pf\in \bigcap \mathcal{J}$. Thus, $f\in H+ \bigcap \mathcal{J}$.\medskip

To prove the converse, it is enough to show that $H\cap I_{f}$ is a projection band in $I_{f}$, for every $f\in F$. Since $H$ is an ideal in $F$, and every ideal of $I_{f}$ is an ideal in $F$, we may assume that $F$ is a dense sublattice of $\Co\left(K\right)$, for some compact $K$, which contains $\1$. Let $U:=\supp H$. For every $x\in U$, apply the norm-dense Urysohn lemma to $\left\{x\right\}, U, \1$, and obtain $g_{x}\in F_{+}$ which vanishes at $x$ and such that $\left.g_{x}\right|_{K\backslash U}\equiv 2$. Then $M=g^{-1}_{x}\left[0,1\right]$ is a compact subset of $U$, and so applying Remark \ref{urur} to $H$ and $\1$, find $h\in H$ such that $\left.h\right|_{M}\equiv 1$. It follows that $g_{x}+h\ge \1$, and so $H+I_{g_{x}}=F$. Therefore, $\bigcap\limits_{x\in U}\left(H+ I_{g_{x}}\right)=F$, but on the other hand, $E=\bigcap\limits_{x\in U} I_{g_{x}}$ consist of functions that vanish on $U$. Thus, $H\bot E$ and $F=H+E$, and so $H$ is a projection band.
\end{proof}

Clearly, Archimedean property was not used in the proof of necessity.

\begin{question}
Is Archimedean property in Theorem \ref{infd} superfluous?
\end{question}

\subsection{Projection bands and order bounded sets}

Everywhere in this subsection $F$ is an Archimedean vector lattice. The following is a direct consequence of Lemma \ref{bern} and Krein-Kakutani theorem. We will call it the abstract ABVG Lemma.

\begin{lemma}[ABVG]
Let $f\in F_{+}$ and let $H\subset F$ be the ideal generated by an increasing sequence $\left(h_{n}\right)_{n\in\N}$. Then, there is a sequence $\left(f_{n}\right)_{n\in\N}\subset H_{+}$ with $g_{n}:=\sum\limits_{k=1}^{n}f_{k}$ such that:
\begin{itemize}
\item $f_{n}\bot f_{m}$, for $\left|m-n\right|>1$;
\item $f-g_{n}\bot f_{m}$, for every $m<n$;
\item $0\le f-g_{n}\le \left(f-h_{n}\right)^{+}+\frac{1}{n}f$, for every $n\in\N$;
\item If $g\ge g_{n}$, for every $n\in\N$, then $\left(f-g\right)^{+}\bot H$.
\end{itemize}
\end{lemma}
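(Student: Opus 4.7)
My plan is to reduce the abstract claim to the concrete Lemma~\ref{bern} via a Krein--Kakutani representation applied to a principal ideal containing both $f$ and $e$, truncating the sequence $(h_{n})$ so it fits inside that ideal.

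Set $u := f + e$, and let $\Phi\colon I_{u}\to \hat{I}_{u}\subset \Co(K)$ be a Krein--Kakutani isomorphism onto a dense sublattice of $\Co(K)$ for some compact Hausdorff $K$, normalized so that $\Phi(u)=\1_{K}$. Define $\tilde{h}_{n} := h_{n}\wedge nu$; this is an increasing sequence in $(I_{u}\cap H)_{+}$, and I let $\tilde{H}$ denote the ideal in $I_{u}$ that it generates. A quick verification gives $\tilde{H} = H\cap I_{u}$: given $g\in (H\cap I_{u})_{+}$, pick $n,\lambda,\mu$ with $g\le \lambda h_{n}$ and $g\le \mu u$; then $g \le M\tilde{h}_{M}$ for every $M\ge \max(n,\lambda,\mu)$. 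Now apply Lemma~\ref{bern} inside $\hat{I}_{u}$ to the sequence $(\Phi(\tilde{h}_{n}))$ and the element $\Phi(f)$, and define $f_{n} := \Phi^{-1}(\hat{f}_{n})\in \tilde{H}_{+}\subset H_{+}$ with partial sums $g_{n}$.

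The disjointness conditions in the first two bullets transfer directly along the lattice isomorphism $\Phi$. For the third bullet, Lemma~\ref{bern} yields $\hat{f}-\hat{g}_{n}\le (\hat{f}-\Phi(\tilde{h}_{n}))^{+}+\tfrac{1}{n}\1_{K}$ in $\Co(K)$; the identity $f-(a\wedge b)=(f-a)\vee(f-b)$, combined with $(f-nu)^{+}=\0$ (since $nu\ge nf\ge f$ for every $n\ge 1$), collapses $(f-\tilde{h}_{n})^{+}$ to $(f-h_{n})^{+}$, and pulling back $\tfrac{1}{n}\1_{K}$ through $\Phi^{-1}$ gives $\tfrac{1}{n}u$, so $f-g_{n}\le (f-h_{n})^{+}+\tfrac{1}{n}u$ in $F$. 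For the fourth bullet I would replace $g$ by $g\wedge f\in I_{u}$: this preserves the hypothesis $g\ge g_{n}$ because $g_{n}\le f$, and leaves $(f-g)^{+}$ unchanged. Lemma~\ref{bern} then produces $(f-g)^{+}\bot \tilde{H}$ in $I_{u}$; to upgrade to disjointness from all of $H$ in $F$, pick $h\in H_{+}$ with $h\le \lambda h_{n}$ and use $(f-g)^{+}\le u$ to bound $(f-g)^{+}\wedge h\le \lambda(f-g)^{+}\wedge \tilde{h}_{M}=\0$ for $M$ sufficiently large.

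The main subtlety I anticipate is matching the bound $\tfrac{1}{n}e$ stated in the lemma: the reduction above naturally yields $\tfrac{1}{n}u=\tfrac{1}{n}(f+e)$, because the Krein--Kakutani unit of $I_{u}$ is $u$ rather than $e$. In the special case $f\in I_{e}$ one may take $u$ to be a multiple of $e$ and obtain $\tfrac{1}{n}e$ exactly; in general this argument delivers only $(f-h_{n})^{+}+\tfrac{1}{n}(f+e)$, so I would either verify that the intended applications can absorb the extra $\tfrac{1}{n}f$ term, or design a finer truncation of $(h_{n})$ that isolates the $\tfrac{1}{n}e$ contribution from the residue involving $f$.
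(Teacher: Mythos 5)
Your reduction is exactly the argument the paper intends: the author offers no proof beyond the remark that the abstract lemma ``is a direct consequence of Lemma~\ref{bern} and Krein--Kakutani theorem,'' and your fleshing-out of that one-liner (representing $I_{u}$ with $u=f+e$, truncating to $\tilde{h}_{n}=h_{n}\wedge nu$, checking $\tilde{H}=H\cap I_{u}$, transferring the disjointness bullets, replacing $g$ by $g\wedge f$ for the fourth bullet, and upgrading $\bot\tilde{H}$ to $\bot H$ via $h\le\lambda h_{n}$ and $(f-g)^{+}\le u$) is correct in every step.

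The one point you flag --- that the representation delivers $\frac{1}{n}u=\frac{1}{n}(f+e)$ rather than $\frac{1}{n}e$ --- is not a defect of your reduction that a ``finer truncation'' could repair: the lemma as printed, with an arbitrary $e\in F_{+}$ in the third bullet, is actually false. Take $F=\Co\left[0,1\right]$, $f=\1$, $e\left(t\right)=t$, $h_{n}=ne$, so $H=I_{e}$. Every $g_{m}\in I_{e}$ satisfies $g_{m}\left(t\right)\le\lambda_{m}t<1$ near $0$, so $f-g_{m+1}>\0$ on a neighborhood of $0$, and the second bullet then forces each $f_{m}$, hence $g_{2}$, to vanish on a neighborhood of $0$; but the third bullet with $n=2$ reads $1-g_{2}\left(t\right)\le\left(1-2t\right)^{+}+\frac{t}{2}$, which for small $t>0$ gives $g_{2}\left(t\right)\ge\frac{3t}{2}>0$ --- a contradiction. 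So the correct abstract statement is the one you actually proved, with $\frac{1}{n}\left(f+e\right)$ (equivalently, with $\frac{1}{n}e$ under the extra hypothesis $f\in I_{e}$, representing $I_{e}$ instead). This weakening is harmless for the paper: in the proof of Theorem~\ref{main} only the fourth bullet and the consequence $f_{n}\le g_{n}\le f$ of the third bullet's lower estimate are used, and both survive intact. You should state your version as the lemma and note the discrepancy rather than try to recover the printed bound.
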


\begin{remark}
Note that every ideal which is generated by a countable set is generated by an increasing sequence.
\end{remark}

\begin{theorem}\label{main}For an ideal $H\subset F$ the following conditions are equivalent:
\item[(i)] $H$ is a projection band;
\item[(ii)] Every $G\subset H_{+}$, which is order bounded in $F$, is order bounded in $H$;
\item[(iii)] Every disjoint $G\subset H_{+}$, which is order bounded in $F$, is order bounded in $H$;
\item[(iv)] There is an ideal $E\subset H$ which is order dense in $H$, and such that every disjoint $G\subset E_{+}$, which is order bounded in $F$, is order bounded in $H$.\medskip

Moreover, if $H$ contains a countably generated order dense ideal $E\subset H$ such that every countable disjoint $G\subset E_{+}$, which is order bounded in $F$, is order bounded in $H$, then $H$ is a projection band.
\end{theorem}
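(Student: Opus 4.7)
For (i)$\Rightarrow$(ii), if $P\colon F\to H$ is the band projection then any $G\subset H_+$ bounded in $F$ by $f$ satisfies $g=Pg\le Pf\in H$ for every $g\in G$. The implications (ii)$\Rightarrow$(iii) and (iii)$\Rightarrow$(iv) are trivial (the latter by taking $E=H$). For (ii)$\Rightarrow$(i), fix $f\in F_+$: the directed set $A:=H\cap[0_F,f]$ is bounded in $H$ by (ii), say by $s\in H_+$, and $s\wedge f\in A$ is then the maximum, so $\sup_H A=s\wedge f=:Pf$. If some $e\in H_+$ had $e\wedge(f-Pf)>0_F$, then $Pf+e\wedge(f-Pf)\in A$ would exceed $Pf$; hence $f-Pf\in H^d$ and $f\in H+H^d$.

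The substance of the proof is the Moreover clause, which the abstract ABVG Lemma delivers almost at once and which also powers (iv)$\Rightarrow$(i). Assume $E$ is generated by an increasing sequence $(h_n)\subset E_+$; for $f\in F_+$, apply the abstract ABVG Lemma with $f$, parameter $e=f$, and $(h_n)$ to produce $(f_n)\subset E_+$ whose partial sums $g_n:=f_1+\cdots+f_n$ enjoy the decisive residual property that any $g\in F$ with $g\ge g_n$ for all $n$ satisfies $(f-g)^+\bot E$. The parity-split subsequences $(f_{2k-1})$ and $(f_{2k})$ are countable disjoint families in $E_+$ bounded in $F$ by $f$, so the Moreover hypothesis supplies $H$-bounds $h_{\mathrm{odd}},h_{\mathrm{even}}\in H_+$. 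Because disjoint positives satisfy $a+b=a\vee b$, any finite sum of mutually disjoint elements below $h_{\mathrm{odd}}$ stays below $h_{\mathrm{odd}}$, so $g_n\le h_{\mathrm{odd}}+h_{\mathrm{even}}=:h\in H_+$ for every $n$. The ABVG conclusion yields $(f-h)^+\bot E$, promoted to $(f-h)^+\bot H$ by order density of $E$ in $H$. Replacing $h$ with $h\wedge f$ (still dominating each $g_n$) gives $f=(h\wedge f)+(f-h)^+$ with $h\wedge f\in H$ and $(f-h)^+\in H^d$, finishing the Moreover.

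For (iv)$\Rightarrow$(i) with a general (not necessarily countably generated) $E$, the plan is to iterate the above ABVG argument transfinitely. Given $f\in F_+$, construct inductively an increasing $h_\alpha\in H_+\cap[0_F,f]$ together with a disjoint family $e_\alpha'\in E_+$ satisfying $e_\alpha'\le(f-h_\alpha)^+$: at a successor, if $(f-h_\alpha)^+$ is not already disjoint from $E$, pick $e\in E_+$ with $e\wedge(f-h_\alpha)^+>0_F$, set $e_\alpha':=e\wedge(f-h_\alpha)^+\in E_+$ (automatically disjoint from each earlier $e_\beta'$ since $(f-h_\alpha)^+\le(f-h_\beta)^+\bot e_\beta'$ by the inductive hypothesis), and rerun the above ABVG-and-(iv) argument on $f-h_\alpha$ with the countably generated $I_{e_\alpha'}\subset E$ in place of $E$ to obtain $h_{\alpha+1}$ with $(f-h_{\alpha+1})^+\bot I_{e_\alpha'}$. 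At limit ordinals, (iv) applied to the disjoint family $\{e_\alpha'\}$ provides an $H$-bound, which together with careful bookkeeping of the increments $h_{\alpha+1}-h_\alpha$ relative to the $e_\alpha'$'s lets one form $h_\lambda\in H$. A cardinality argument on maximal disjoint subfamilies of $E_+\cap[0_F,f]$ terminates the construction at some $h_*$ with $(f-h_*)^+\bot E$, hence $\bot H$, so $f\in H+H^d$. The main obstacle is the limit step, where one must transfer the $H$-bound on the horizontal disjoint family $\{e_\alpha'\}$ into an $H$-bound on the vertical chain $\{h_\alpha\}$; this is the only point where (iv) proper (rather than just the Moreover hypothesis) is essentially used.
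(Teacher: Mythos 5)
Your chain (i)$\Rightarrow$(ii)$\Rightarrow$(iii)$\Rightarrow$(iv) and your treatment of the ``Moreover'' clause are correct and essentially coincide with the paper's argument (the direct proof of (ii)$\Rightarrow$(i) via the maximum $s\wedge f$ of $H\cap\left[0_{F},f\right]$ is a nice extra, though it is subsumed by (iv)$\Rightarrow$(i)). The problem is (iv)$\Rightarrow$(i) for a general order dense ideal $E$: your transfinite induction is only a sketch, and the limit step you yourself flag as ``the main obstacle'' is a genuine, unresolved gap. At a limit ordinal you must produce $h_{\lambda}\in H$ dominating the increasing chain $\left(h_{\alpha}\right)_{\alpha<\lambda}$, but hypothesis (iv) only controls \emph{disjoint} families, and an increasing chain is as far from disjoint as possible. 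An $H$-bound on the horizontal family $\left\{e'_{\alpha}\right\}$ says nothing about the $h_{\alpha}$'s, which are not dominated by the $e'_{\alpha}$'s; and the increments $h_{\alpha+1}-h_{\alpha}$ are of the form $k_{\alpha}\wedge\left(f-h_{\alpha}\right)$ with $k_{\alpha}\in H$ not confined to $I_{e'_{\alpha}}$, so they are not mutually disjoint either and (iv) cannot be applied to them. ``Careful bookkeeping'' is precisely the missing idea, not a routine verification.

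The paper avoids transfinite recursion entirely with one aggregation trick you missed: fix $f\in F_{+}$ and a \emph{maximal} disjoint set $G\subset E_{+}$, and apply the ABVG Lemma to $f$ and the principal ideal $I_{g}$ for \emph{every} $g\in G$ simultaneously, obtaining $\left(f_{n}^{g}\right)_{n\in\N}\subset I_{g}\cap\left[0_{F},f\right]$. The two families $\left\{f_{2n-1}^{g},~n\in\N,~g\in G\right\}$ and $\left\{f_{2n}^{g},~n\in\N,~g\in G\right\}$ are each disjoint --- within a fixed $g$ by the parity property, and across distinct $g,g'$ automatically because $f_{n}^{g}\in I_{g}$, $f_{m}^{g'}\in I_{g'}$ and $g\bot g'$ --- and both are order bounded by $f$, so (iv) yields upper bounds $h_{1},h_{2}\in H$; then $h=h_{1}+h_{2}$ dominates all partial sums of every $\left(f_{n}^{g}\right)_{n\in\N}$, whence $\left(f-h\right)^{+}\bot I_{g}$ for all $g\in G$, and maximality of $G$ gives $\left(f-h\right)^{+}\in G^{d}=E^{d}=H^{d}$ while $f\wedge h\in H$. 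Your parity-splitting instinct was exactly right; you only needed to run it over all members of a maximal disjoint family at once rather than over one principal ideal at a time.
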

\begin{proof}
(i)$\Rightarrow$(ii): If $P$ is the band projection corresponding to $H$, and $G\subset H_{+}$ is such that there is $f\in F_{+}$ with $G\subset\left[0,f\right]$, then $g=Pg\le Pf\in H$, for every $g\in G$. Hence, $G\subset\left[0,Pf\right]$, and so $G$ is order bounded in $H$. (ii)$\Rightarrow$(iii)$\Rightarrow$(iv) are trivial.\medskip

(iv)$\Rightarrow$(i): Let $f\in F_{+}$ and let $G\subset E_{+}$ be a maximal disjoint set. For every $g\in G$ apply ABVG Lemma to $f$ and $I_{g}$, and get $\left(f_{n}^{g}\right)_{n\in\N}\subset I_{g}\cap\left[0,f\right]\subset E\cap\left[0,f\right]$ such that any upper bound $h$ of the partial sums of this sequence satisfies $\left(f-h\right)^{+}\bot  I_{g}$. Note that $\left\{f_{2n-1}^{g},~ n\in\N,~ g\in G\right\}$ and $\left\{f_{2n}^{g},~ n\in\N,~ g\in G\right\}$ are disjoint order bounded (by $f$) subsets of $E$, and so by assumption there are upper bounds $h_{1}$ and $h_{2}$ of these sets in $H$. Then, $h=h_{1}+h_{2}\in H$ is upper bound of the partial sums of the sequence $\left(f_{n}^{g}\right)_{n\in\N}$, from where $\left(f-h\right)^{+}\bot I_{g}$, hence $\left(f-h\right)^{+}\bot g$, for every $g\in G$. Since $G$ was maximal disjoint, it follows that $f-f\wedge h=\left(f-h\right)^{+}\in E^{d}=H^{d}$. On the other hand, as $H$ is an ideal, $f\wedge h\in H$, and so $f\in H+H^{d}$. Since $f$ was arbitrary, we conclude that $H$ is a projection band.\medskip

For the last claim assume that $E$ is generated by an increasing sequence $\left(e_{n}\right)_{n\in\N}$, then repeat the proof of (iv)$\Rightarrow$(i) for the sequence $\left(f_{n}\right)_{n\in\N}$, produced by ABVG Lemma.
\end{proof}

A vector lattice is called \emph{disjointly ($\sigma$-)complete} if any (countable) disjoint order bounded set has a supremum. The second claim in the following result is from \cite{vg}.

\begin{corollary}\label{spp}
An Archimedean vector lattice $F$ has ($\sigma$-)PP if and only if for any (countable) order bounded disjoint set $G$ there is $g\in G^{dd}$ such that $G\le g$. In particular, disjoint ($\sigma$-)completeness implies ($\sigma$-)PP.
\end{corollary}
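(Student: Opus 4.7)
The plan is to reduce Corollary~\ref{spp} directly to Theorem~\ref{main} by taking $H = G^{dd}$. In the forward direction, assume $F$ has PP, and let $G$ be a disjoint order bounded subset of $F_{+}$. Then $G^{dd}$ is a band, hence by PP a projection band, so condition (iii) of Theorem~\ref{main} applied with $H = G^{dd}$ supplies an element $g \in G^{dd}$ with $G \leq g$. The same argument works for $\sigma$-PP when $G$ is countable, because then $G^{dd}$ is a countably generated band and therefore a projection band by $\sigma$-PP.

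For the converse, assume the disjointness condition and let $B$ be a band in $F$. To verify that $B$ is a projection band I check (iii) of Theorem~\ref{main}: given a disjoint $G \subset B_{+}$ order bounded in $F$, the hypothesis yields $g \in G^{dd}$ with $G \leq g$; since $G \subset B$ and $B = B^{dd}$ by Archimedeanness, one gets $G^{dd} \subset B^{dd} = B$, so $g \in B$ and $G$ is order bounded in $B$. For $\sigma$-PP I would instead invoke the final clause of Theorem~\ref{main}: given a countably generated band $B = S^{dd}$ with $S$ countable, the ideal $E = I(S)$ is countably generated and order dense in $E^{dd} = S^{dd} = B$; for any countable disjoint $G \subset E_{+}$ order bounded in $F$, the hypothesis provides $g \in G^{dd} \subset E^{dd} = B$ with $G \leq g$, and the last clause of Theorem~\ref{main} then yields that $B$ is a projection band.

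The ``in particular'' claim is immediate once the equivalence is in hand: if $F$ is disjointly ($\sigma$-)complete and $G$ is a (countable) disjoint order bounded subset, then $g := \bigvee G$ exists in $F$, and because bands in an Archimedean vector lattice are order closed and $G \subset G^{dd}$, the supremum $g$ lies in $G^{dd}$, verifying the criterion just established. No real obstacle is anticipated beyond matching the ``general versus countable'' variant of Theorem~\ref{main} to the case at hand, which is the only nontrivial bookkeeping in the argument.
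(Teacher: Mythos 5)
Your proof is correct and follows essentially the same route as the paper: both directions reduce to Theorem \ref{main} by taking $H=G^{dd}$ (respectively an arbitrary band $B=B^{dd}$), with the countable case handled via the final clause of that theorem applied to the order dense countably generated ideal $I(S)$. Your write-up is, if anything, slightly more explicit than the paper's about why the $\sigma$-PP direction needs that ``moreover'' clause rather than condition (iii).
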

\begin{proof}
Necessity is clear. Conversely, if $H$ is a (countably generated) band in $F$, and $G\subset H$ is an order bounded (countable) disjoint set, then it is order bounded in $G^{dd}\subset H$. Hence, according to Theorem \ref{main}, $H$ is a projection band.
\end{proof}

Recall that  a Tychonoff space $X$ is called \emph{extremally (basically) disconnected} if the closure of an open (cozero) set is open. In the following theorem (i)$\Leftrightarrow$(ii) is due to Nakano (\cite{nak}), (i)$\Leftrightarrow$(iii) is due to Luxemburg and Zaanen (\cite{zl}) and Veksler (\cite{vek}), and (i)$\Leftrightarrow$(vi) is due to Veksler and Geiler (\cite{vg}).

\begin{theorem}\label{ed}For a compact Hausdorff $K$ the following conditions are equivalent:
\item[(i)]  $K$ is extremally disconnected;
\item[(ii)]  $\Co\left(K\right)$ is order complete;
\item[(iii)]  $\Co\left(K\right)$ has PP;
\item[(iv)] Every dense sublattice of $\Co\left(K\right)$ has PP;
\item[(v)] $\Co\left(K\right)$ contains a dense sublattice with PP;
\item[(vi)] $\Co\left(K\right)$ contains a dense disjointly order complete sublattice.
\end{theorem}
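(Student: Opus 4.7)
The plan is to reduce the equivalence of (i)--(vi) to Nakano's classical theorem (i)$\Rightarrow$(ii), with all other implications following from Proposition \ref{ur} and the corollaries on projection bands proved earlier in this section.

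I would begin with (i)$\Leftrightarrow$(iii), applying Proposition \ref{ur} to $\Co(K)$ viewed as a Urysohn sublattice of itself. By part (i) every band of $\Co(K)$ is of the form $\Co(K;\overline{V})$ for some open $V \subset K$; by part (ii) such an ideal is a projection band iff $\overline{V}$ is clopen, which, since $\overline V$ is already closed, amounts to $\overline V$ being open. Thus $\Co(K)$ has PP precisely when the closure of every open subset of $K$ is open, which is exactly extremal disconnectedness. The same reasoning handles (i)$\Rightarrow$(iv): any dense sublattice $E$ of $\Co(K) = \Co_b(K)$ is Urysohn by the norm-dense Urysohn Lemma, and the formula $G^{dd}_E = E(\overline{\supp G})$ recorded right before Proposition \ref{ur} shows every band in $E$ has the form $E(A)$ with $\Int A$ regularly open; under (i), every regularly open set is clopen, so the ``dense in $\Co_b$'' case of Proposition \ref{ur}(ii) promotes such $E(A)$ to a projection band.

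The step (iv)$\Rightarrow$(v) is trivial. For (v)$\Rightarrow$(iii) I would verify that any dense sublattice $E \subset \Co(K)$ is automatically order dense and majorizing---majorizing because approximating the constant function $(\|f\|+1)\1$ within norm distance $1$ by an element of $E$ produces a majorant of any given $f \in \Co(K)_+$, and order dense because Urysohnness furnishes, in any open set on which $0 < f$ exceeds a positive constant, a nonzero $g \in E$ scalable to $g \le f$---so that PP of $E$ transfers to $\Co(K)$ by the corollary immediately following Corollary \ref{pb2}. The loop on (vi) is similarly light: (vi)$\Rightarrow$(v) is Corollary \ref{spp}, while (ii)$\Rightarrow$(iii) is the standard fact that order completeness implies PP and (ii)$\Rightarrow$(vi) is the trivial choice $E = \Co(K)$.

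This leaves the sole nontrivial direction (i)$\Rightarrow$(ii), i.e.\ Nakano's theorem, which is the main analytic obstacle. Given an order bounded $A \subset \Co(K)_+$, one forms the lower semicontinuous pointwise supremum $f^*$; for each $\alpha \in \Q$ the open set $\{f^* > \alpha\}$ has clopen closure by (i), and the function $\tilde f(x) = \sup\{\alpha : x \in \overline{\{f^* > \alpha\}}\}$ turns out to be continuous and to equal $\sup A$ in $\Co(K)$. I would cite this construction from \cite{nak}, after which the web of implications above closes up all six equivalences.
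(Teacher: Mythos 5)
Your proof is correct, and most of it coincides with the paper's: both cite Nakano for (i)$\Rightarrow$(ii), obtain (i)$\Rightarrow$(iv) from the description of bands in a norm-dense (hence Urysohn) sublattice as $E\left(\overline{\supp G}\right)$ together with part (ii) of Proposition \ref{ur}, and handle (vi) via Corollary \ref{spp} and the trivial implication (ii)$\Rightarrow$(vi). The one genuine divergence is how you bring (v) back into the cycle. The paper argues (v)$\Rightarrow$(i) directly and topologically: for closed $A$ the ideal $E\left(A\right)$ is a band, hence by PP a projection band, hence $\Int A$ is clopen. You instead prove (v)$\Rightarrow$(iii) by an abstract transfer: a norm-dense sublattice of $\Co\left(K\right)$ is majorizing (approximate $\left(\|f\|+1\right)\1$) and order dense (Urysohnness plus the criterion $E\cap\left(\0,\1_{U}\right]\ne\varnothing$), so PP passes up to $\Co\left(K\right)$ by the corollary following Corollary \ref{pb2}, after which your separately established (iii)$\Rightarrow$(i) closes the loop. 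Both routes are sound; the paper's is shorter and stays entirely within the topological machinery of Section \ref{s}, while yours exhibits (i)$\Leftrightarrow$(iii) as a self-contained equivalence and illustrates the order-density transfer principle. Two small points to tidy: the fact that every band of $\Co\left(K\right)$ has the form $\Co\left(K;\overline{V}\right)$ is not part (i) of Proposition \ref{ur} but the identity $G^{dd}_{E}=E\left(\overline{\supp G}\right)$ combined with $B=B^{dd}$; and Proposition \ref{ur} yields clopenness of $\Int\overline{V}$ rather than of $\overline{V}$, so openness of $\overline{V}$ requires the one-line observation $\overline{V}\subset\overline{\Int\overline{V}}=\Int\overline{V}$.
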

\begin{proof}
First, (iv)$\Rightarrow$(iii)$\Rightarrow$(v) and (ii)$\Rightarrow$(vi) are trivial, while (vi)$\Rightarrow$(v) follows from Corollary \ref{spp}. We will not prove (i)$\Rightarrow$(ii) here, referring to e.g. \cite[Theorem 43.8]{zl}.\medskip

(i)$\Rightarrow$(iv): Let $E$ be a dense sublattice. Every band in $E$ is of the form $E\left(A\right)$, where $A$ is closed. Then, $\Int A$ is clopen, and so $E\left(A\right)=E\left(\Int A\right)$ is a projection band, according to part (ii) of Proposition \ref{ur}.\medskip

(v)$\Rightarrow$(i): Let $E$ be a dense PP sublattice of $\Co\left(K\right)$. If $A\subset K$ is closed, then $E\left(A\right)$ is a band, therefore a projection band, from where $\Int A$ is clopen.
\end{proof}

\begin{remark}
In order to adapt Theorem \ref{ed} to the case when $K$ is not necessarily compact ``dense sublattice of $\Co\left(K\right)$'' should be replaced with ``dense sublattice of $\Co_{b}\left(K\right)$'' in (iii), and with ``Urysohn sublattice of $\Co\left(K\right)$'' in (iv) and (v).
\end{remark}

Attributions in the following theorem mirror those in the preceding one.

\begin{theorem}\label{bd}For a compact Hausdorff $K$ the following conditions are equivalent:
\item[(i)] $K$ is basically disconnected;
\item[(ii)] $\Co\left(K\right)$ is $\sigma$-complete;
\item[(iii)] $\Co\left(K\right)$ has PPP;
\item[(iv)] Every dense sublattice of $\Co\left(K\right)$ has $\sigma$-PP;
\item[(v)] $\Co\left(K\right)$ contains a dense disjointly $\sigma$-order complete sublattice;
\item[(vi)] $\Co\left(K\right)$ contains a dense sublattice with $\sigma$-PP.
\end{theorem}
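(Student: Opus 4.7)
The plan is to mirror the proof of Theorem \ref{ed}, replacing ``band'' with ``countably generated band'' and ``closed set'' with ``closure of a cozero set.'' The key observation is that any cozero subset $U$ of the compact Hausdorff space $K$ is locally compact and $\sigma$-compact, so Lemma \ref{ds2} will let me pass between countably generated ideals in a Urysohn sublattice of $\Co(K)$ and cozero subsets of $K$. The two substantive implications are (i)$\Rightarrow$(iv) and (vi)$\Rightarrow$(i); the remaining arrows are either formal or classical.

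I would first dispose of the easy steps: (iv)$\Rightarrow$(iii) is immediate on taking $E = \Co(K)$, since $\sigma$-PP implies PPP; (ii)$\Rightarrow$(v) is tautological; (v)$\Rightarrow$(vi) is Corollary \ref{spp}; and (i)$\Rightarrow$(ii) is the classical Nakano theorem, which I would simply quote. For (iii)$\Rightarrow$(i), given a cozero set $U = \supp f$ with $f \in \Co(K)$, PPP says $\{f\}^{dd} = \Co(K;\overline{U})$ is a projection band, so Proposition \ref{ur}(ii) forces $\Int \overline{U}$ to be clopen; since $U \subset \Int \overline{U} \subset \overline{U}$ and $\Int \overline{U}$ is closed, I conclude $\overline{U} = \Int \overline{U}$ is open.

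For (i)$\Rightarrow$(iv) I would take an arbitrary dense sublattice $E \subset \Co(K)$, note it is Urysohn by the norm-dense Urysohn lemma (Proposition \ref{dul}), and write a countably generated band as $\{f_n\}_{E}^{dd} = E(\overline{U})$ for $U = \bigcup_n \supp f_n$ a cozero set, using the formula $G_{E}^{dd} = E(\overline{\supp G})$ recorded just before Proposition \ref{ur}; basic disconnectedness yields $\overline{U}$ clopen, and Proposition \ref{ur}(ii) then gives the projection band conclusion (its hypotheses being satisfied since $K$ is compact). For (vi)$\Rightarrow$(i), given a dense $\sigma$-PP sublattice $E$ (again Urysohn), and any cozero $U$, I would apply Lemma \ref{ds2} to the ideal $E(U)$, whose support equals $U$ and which is locally compact and $\sigma$-compact, to obtain a countable $G \subset E(U)$ with $\supp G = U$; then $G_{E}^{dd} = E(\overline{U})$ is a projection band by $\sigma$-PP, so Proposition \ref{ur}(ii) forces $\Int \overline{U}$ to be clopen, and one finishes as in (iii)$\Rightarrow$(i).

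The main obstacle, mirroring the corresponding step in Theorem \ref{ed}, is the (vi)$\Rightarrow$(i) direction: one cannot pick a closed set directly, but must first manufacture a countable generating set for $E(U)$ from the cozero $U$, which is exactly the role of Lemma \ref{ds2}. Once that bridge is in place, the argument collapses into the same ``compute $G_{E}^{dd} = E(\overline{U})$ and invoke Proposition \ref{ur}(ii)'' template used throughout.
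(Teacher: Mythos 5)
Your proposal is correct and follows essentially the same route as the paper: the same cycle of implications, the same use of Corollary \ref{spp} for (v)$\Rightarrow$(vi), the identification $G_{E}^{dd}=E(\overline{\supp G})$ together with Proposition \ref{ur}(ii) for (i)$\Rightarrow$(iv) and (iii)$\Rightarrow$(i), and Lemma \ref{ds2} applied to $E(U)$ as the bridge from a cozero set to a countably generated band in (vi)$\Rightarrow$(i). The only differences are cosmetic (you work with $\{f\}^{dd}$ and $G_{E}^{dd}$ where the paper uses the disjoint complements, and you spell out the local compactness and $\sigma$-compactness check that the paper leaves implicit).
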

\begin{proof}
First, (iv)$\Rightarrow$(iii), (iv)$\Rightarrow$(vi) and (ii)$\Rightarrow$(v) are trivial; while (v)$\Rightarrow$(vi) follows from Corollary \ref{spp}. Again, we refer to \cite[Theorem 43.11]{zl} for (i)$\Rightarrow$(ii).

(iii)$\Rightarrow$(i): Let $U\subset X$ be a cozero set. There is $f\in \Co\left(K\right)$ such that $U=\supp f$. Due to PPP we have that $\left\{f\right\}_{E}^{d}=\Co\left(K; K\backslash \overline{U}\right)$ is a projection band, from where $\overline{U}$ is clopen.\medskip

(i)$\Rightarrow$(iv): Let $E\subset \Co\left(K\right)$ be a dense sublattice, and $H$ is a countably generated band in $E$. That is, $H=G^{dd}_{E}$, where $G\subset E$ is countable. Hence, $U:=\supp G$ is a cozero set and such that $H=E\left(\overline{U}\right)$. Since $K$ is basically disconnected, $\overline{U}$ is clopen, and so from part (ii) of Proposition \ref{ur}, $H$ is a projection band.\medskip

(vi)$\Rightarrow$(i): Let $U$ be a cozero set. From Lemma \ref{ds2}, there is a countably generated ideal $H$ of $E$ such that $U=\supp H$. Due to $\sigma$-PP we have that $H^{d}$ is a projection band, and so $X\backslash\overline{U}=\supp H^{d}$ is clopen, according to part (ii) of Proposition \ref{ur}.
\end{proof}

\begin{remark}
It was also proven in \cite[Corollary 5.8]{erz} that a compact Hausdorff $K$ is \emph{totally disconnected} (i.e. contains no connected subsets with more than one point) if and only if $\Co\left(K\right)$ contains a dense sublattice with PPP.
\end{remark}

\begin{corollary}\label{kks}$F$ has PP ($\sigma$-PP) if and only if Krein-Kakutani spectrum of each of its elements is extremally (basically) disconnected.
\end{corollary}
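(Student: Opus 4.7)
The plan is to reduce both assertions to Theorems \ref{ed} and \ref{bd} by inspecting each principal ideal $I_e$, which sits as a dense sublattice of $\Co(K_e)$ via the Krein-Kakutani representation.

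For the forward direction, I would first verify that PP and $\sigma$-PP are inherited by ideals. If $E \subseteq F$ is an ideal and $B$ is a band in $E$, then by Corollary \ref{rel}(i) we have $B = B^{dd}_{E} = B^{dd}\cap E$. So if $F$ has PP and $P$ is the band projection of $F$ onto $B^{dd}$, then for every $e \in E$ we get $Pe \in B^{dd}\cap E = B$ and $(I-P)e \in B^{d}\cap E = B^{d}_{E}$, hence $E = B + B^{d}_{E}$; the $\sigma$-PP version is the same argument applied to countably generated bands. Consequently, for every $e \in F_{+}$ the principal ideal $I_{e}$ inherits PP (resp.\ $\sigma$-PP). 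Since $I_{e}$ is a dense sublattice of $\Co(K_{e})$, condition (v) of Theorem \ref{ed} (resp.\ Theorem \ref{bd}) is satisfied, and (v)$\Rightarrow$(i) of the appropriate theorem forces $K_{e}$ to be extremally (resp.\ basically) disconnected.

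For the reverse direction, the main lever is Corollary \ref{pppp}, according to which $H := \{h \in F : I_{h}\mbox{ has PP}\}$ is an ideal of $F$ that itself has PP, and likewise for $\sigma$-PP. Assume every $K_{e}$ is extremally disconnected; then Theorem \ref{ed} (i)$\Rightarrow$(iv) asserts that every dense sublattice of $\Co(K_{e})$ has PP, in particular $I_{e}$ does. Hence $H = F$, and Corollary \ref{pppp} yields that $F$ has PP. The $\sigma$-PP half is proved identically, replacing Theorem \ref{ed} with Theorem \ref{bd}.

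The only step that requires a line of work beyond citing previous results is the inheritance-of-PP statement, and that is a one-step consequence of Corollary \ref{rel}; everything else is a direct packaging of ingredients already in place.
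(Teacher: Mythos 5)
Your proof is correct and follows the route the paper intends: the corollary is stated without proof precisely because it is the combination of Theorems \ref{ed} and \ref{bd} (applied to the dense sublattice $I_{e}\subset\Co\left(K_{e}\right)$) with the inheritance of (P)PP by ideals noted after Corollary \ref{rel} for the forward direction, and Corollary \ref{pppp} (equivalently, Proposition \ref{pb}) for the converse. Your explicit verification of the inheritance step via Corollary \ref{rel}(i) is exactly the argument the paper alludes to, so there is nothing to add.
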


We now obtain the well-known result by Luxemburg and Zaanen (see \cite{zl}).

\begin{corollary}An Archimedean vector lattice is ($\sigma$-)order complete if and only if it is PP (resp. PPP) and uniformly complete.
\end{corollary}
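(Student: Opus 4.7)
The plan is to prove both equivalences in parallel -- the unparenthesized version (``order complete $\Leftrightarrow$ PP + uniformly complete'') and the parenthesized one (``$\sigma$-order complete $\Leftrightarrow$ PPP + uniformly complete'') -- reducing everything in the reverse direction to the $\Co\left(K\right)$ case via the Krein-Kakutani representation of principal ideals.

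For the forward direction, suppose $F$ is ($\sigma$-)order complete. Then it is trivially disjointly ($\sigma$-)complete, and Corollary \ref{spp} yields ($\sigma$-)PP, hence in particular PPP in the $\sigma$-case and PP in the non-$\sigma$ case. Uniform completeness is the standard observation that every $\|\cdot\|_{e}$-Cauchy sequence in $I_{e}$ is relatively uniformly Cauchy with respect to $e$, and any such sequence converges relatively uniformly in a $\sigma$-order complete Archimedean vector lattice (by the usual $\limsup / \liminf$-argument).

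For the reverse direction, assume $F$ is uniformly complete and has PPP (resp.\ PP). To establish $\sigma$-order (resp.\ order) completeness, take a bounded increasing sequence (resp.\ a bounded nonempty set) $A\subset F_{+}$ and pick $f\in F_{+}$ with $A\le f$. Then $A\subset I_{f}$, and by uniform completeness $I_{f}$ is vector lattice isomorphic to $\Co\left(K_{f}\right)$. Since PPP (resp.\ PP) is inherited by ideals (as noted after Corollary \ref{rel}), $\Co\left(K_{f}\right)\cong I_{f}$ has PPP (resp.\ PP); Theorem \ref{bd} (resp.\ Theorem \ref{ed}) then makes $K_{f}$ basically (resp.\ extremally) disconnected, so $\Co\left(K_{f}\right)$ is $\sigma$-order (resp.\ order) complete. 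Hence $A$ admits a supremum $g$ in $I_{f}$.

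The remaining step is to verify that this $g$ is also $\sup_{F}A$, and this is the only point that requires any care. Clearly $g$ is an upper bound of $A$ in $F$. If $h\in F$ is any other upper bound, then $h\wedge f\in I_{f}$ and $h\wedge f\ge a$ for every $a\in A$ (using $a\le f$ and $a\le h$), so $g\le h\wedge f\le h$, since $g=\sup_{I_{f}}A$. This ``cut down by $\wedge f$'' trick -- which is precisely what motivates reducing to $I_{f}$ in the first place -- is the main obstacle; once it is in place, the bulk of the work has already been done inside Section \ref{s} and is simply invoked via theorems \ref{ed} and \ref{bd}.
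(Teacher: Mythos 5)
Your proof is correct and follows exactly the route the paper intends: the corollary is stated there without proof as an immediate consequence of Corollary \ref{kks}, Theorems \ref{ed} and \ref{bd}, and the definition of uniform completeness, which is precisely the reduction to $I_{f}\cong\Co\left(K_{f}\right)$ (plus the ``cut down by $\wedge f$'' step) that you carry out. The only detail you elide --- reducing an arbitrary order-bounded-above set to a subset of $\left[0_{F},f\right]$ by translating by one of its elements and truncating --- is standard and harmless.
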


\subsection{About self-majorizing elements}

In this subsection we use some of the results about sublattices of $\Co\left(X\right)$ to discuss self-majorizing elements of vector lattices. The material here complements and refines \cite[Section 3.4]{weber}. Everywhere in the subsection $F$ is an Archimedean vector lattice. Recall that $e\in F_{+}$ is \emph{self-majorizing} if for every $f\in F_{+}$ there is $r>0$ such that $f\wedge ne\le re$, for every $n\in\N$. The following well-known characterization is provided for reader's convenience.

\begin{proposition}
$e$ is self-majorizing if and only if $I_{e}$ is a projection band.
\end{proposition}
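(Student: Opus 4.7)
The plan is to prove each implication separately, the nontrivial direction being a quick application of Theorem \ref{main}.

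For the forward direction, suppose $e$ is self-majorizing and let me verify condition (ii) of Theorem \ref{main} with $H = I_e$. Take any $G \subset \left(I_e\right)_{+}$ that is order bounded in $F$, say $G \subset \left[0_{F},f\right]$ for some $f \in F_{+}$. For each $g \in G$, since $g \in I_e$ there is $n_g \in \N$ with $g \le n_{g} e$, so $g \le f \wedge n_{g} e$. By self-majorization applied to $f$, there exists $r > 0$ (depending only on $f$) with $f \wedge ne \le re$ for every $n \in \N$. Hence $g \le re$ for every $g \in G$, so $G$ is order bounded in $I_e$. Theorem \ref{main} then immediately yields that $I_e$ is a projection band.

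For the converse, suppose $I_e$ is a projection band and let $P$ be the corresponding band projection. Given $f \in F_{+}$, the element $h := Pf$ lies in $I_e$, so $h \le re$ for some $r > 0$ (which depends on $f$). For every $n \in \N$ the element $f \wedge ne$ lies in $I_e$ and satisfies $f \wedge ne \le f$; applying the positive homomorphism $P$ and using $P|_{I_e} = \mathrm{id}$ gives $f \wedge ne = P\left(f \wedge ne\right) \le Pf \le re$. Thus $e$ is self-majorizing.

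There is no real obstacle here: the content of the proposition is essentially that the projection-band condition $F = I_e + I_e^{d}$ is equivalent to the boundedness statement packaged in the definition of self-majorization, and Theorem \ref{main} was designed precisely to express projection-band-ness in exactly such order-boundedness terms. The only minor point to be careful about is that in the forward direction the constant $r$ coming from self-majorization depends on $f$ (the bound for $G$) and not on the individual $g \in G$, which is exactly what is needed so that $re$ serves as a uniform bound in $I_e$.
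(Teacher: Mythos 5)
Your proof is correct. The converse direction (projection band $\Rightarrow$ self-majorizing) is essentially identical to the paper's: apply the band projection $P$ to $f\wedge ne$ and bound $Pf$ by a multiple of $e$. The forward direction, however, takes a genuinely different route. The paper does not invoke Theorem \ref{main} at all; it uses the classical criterion (quoted in the preliminaries from \cite[Theorem 1.41]{ab}) that $H$ is a projection band iff $\bigvee\left(H\cap\left[0_{F},f\right]\right)$ exists in $H$, and observes that self-majorization forces $f\wedge ne=f\wedge re$ for all $n\ge r$, so that this supremum is simply $f\wedge re\in I_{e}$. That argument is elementary and self-contained, whereas your verification of condition (ii) of Theorem \ref{main} routes the proof through the heaviest machinery of the paper (the ABVG Lemma and the Krein--Kakutani representation). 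What your approach buys is uniformity of viewpoint --- it exhibits the proposition as a direct instance of the order-boundedness characterization of projection bands, and your observation that the constant $r$ depends only on the order bound $f$ and not on the individual $g\in G$ is exactly the right point to stress; what it costs is that a one-line explicit construction of the projection is replaced by an appeal to a much deeper theorem.
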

\begin{proof}
If $I_{e}$ is a projection band, for every $f\in F_{+}$ there is a projection $I_{e}\ni h:=Pf=\bigvee \left(\left[0,f\right]\cap I_{e}\right)$. Therefore, for every $n\in\N$ we have $f\wedge ne\le h\le \|h\|_{e}e$. Hence, $e$ is self-majorizing.\medskip

If $e$ is self-majorizing, and $f\ge 0$, there is $r>0$ such that $f\wedge ne\le re$, for every $n\in\N$, from where $f\wedge ne=f\wedge re$, for every $n\ge r$. Thus, $I_{e}\ni f\wedge re=\bigvee \limits_{n\in\N}f\wedge ne$, and so $I_{e}$ is a projection band.
\end{proof}

\begin{proposition}If $F$ is uniformly complete, then for a countably generated ideal $I\subset F$ the following conditions are equivalent:
\item[(i)] $I$ is uniformly closed;
\item[(ii)] $I$ is a band;
\item[(iii)] $I$ is a projection band.
\end{proposition}
\begin{proof}
First, every projection band is a band, and in turn every band is uniformly closed.

Assume that $I=I\left(\left\{h_{n}\right\}_{n\in\N}\right)$ is uniformly closed, for $\left\{h_{n}\right\}_{n\in\N}\subset F_{+}$. According to Proposition \ref{pb}, it is enough to show that $H=I\left(\left\{h_{n}\right\}_{n\in\N}\right)\cap I_{f}=I\left(\left\{f\wedge h_{n}\right\}_{n\in\N}\right)$ is a projection band in $I_{f}$, for every $f\ge 0$ (equality is by part (ii) of Corollary \ref{rel}). Since $F$ is uniformly complete, $I_{f}$ is isomorphic to $\Co\left(K\right)$, for some compact Hausdorff $K$. Hence, $H$ is a closed countably generated ideal in a Banach lattice, and so is a principal ideal. Since according to Proposition \ref{pi} a closed principal ideal in $\Co\left(K\right)$ must be a projection band, the result follows.
\end{proof}

\begin{corollary}\label{sm}
If $F$ is uniformly complete, then  $e\in F_{+}$ is self-majorising if and only if $I_{e}$ is a projection band, if and only if $I_{e}$ is a band, and if and only if $I_{e}$ is uniformly closed.
\end{corollary}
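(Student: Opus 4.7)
The plan is to observe that the corollary is essentially a direct combination of the two propositions immediately preceding it, specialized to the principal ideal $I_e$. Since $I_e = I(\{e\})$ is generated by a single element, it is in particular countably generated, so the preceding proposition (on countably generated ideals in uniformly complete vector lattices) applies directly to $I_e$.

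First I would invoke the proposition stating that $e$ is self-majorizing iff $I_e$ is a projection band; this gives the equivalence between the first two conditions without needing uniform completeness. Then I would apply the preceding proposition to the principal (hence countably generated) ideal $I_e$: under the uniform completeness hypothesis on $F$, this gives the chain of equivalences that $I_e$ is uniformly closed iff $I_e$ is a band iff $I_e$ is a projection band. Chaining these together yields all four equivalences in the corollary.

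There is no real obstacle here, since the hard work has already been done: the self-majorizing/projection-band equivalence is handled by the first preceding proposition (which uses the characterization $\bigvee([0_F,f] \cap I_e)$ exists in $I_e$), and the equivalence of uniform closedness, band, and projection band for countably generated ideals is handled by the second preceding proposition (which in turn uses Proposition~\ref{pb}, Corollary~\ref{rel}, and Proposition~\ref{pi}). Thus the proof of Corollary~\ref{sm} reduces to a one-line appeal to both results.
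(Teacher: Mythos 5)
Your proposal is correct and matches the paper's intent exactly: the corollary carries no separate proof in the paper precisely because it is the immediate specialization of the two preceding propositions to the principal (hence countably generated) ideal $I_{e}$. Nothing is missing.
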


\begin{remark}
Note that under PPP, the third condition is also equivalent to the first two. However, this is not true in general. Let $F$ be the space of all piecewise affine functions on $\left[-1,1\right]$. One can show that $I_{e}=F\left(\supp e\right)$, for every $e\in F$. Let $e$ be defined by $e\left(t\right)=t^{+}$. Since $\supp e=\left(0,1\right]$, which is regularly open but not clopen, $I_{e}$ is a band, but not a projection band. Taking $e$ to be the identity function shows that the last implication is also non-reversible in general.
\end{remark}

\section{Acknowledgements}

The author wants to express gratitude to Vladimir Troitsky for many valuable discussions on the topic of this paper, and in particular for contributing to the proof of Proposition \ref{dul}. Marten Wortel gets credit for bringing the author's attention to the concept of a self-majorizing element. The author also thanks Joseph van Name for contributing an idea to the proof of Theorem \ref{main}, as well as Remy van Dobben de Bruyn and KP Hart who contributed Proposition \ref{normal}, and the service \href{mathoverflow.com/}{MathOverflow} which made it possible. Finally, the anonymous reviewer provided feedback that helped to improve the exposition.

\begin{bibsection}
\begin{biblist}

\bib{abr}{article}{
   author={Abramovich, Yu. A.},
   title={A certain criterion of Amemiya for norm completeness of normed
   lattices},
   language={Russian, with English summary},
   journal={Vestnik Leningrad. Univ.},
   number={7 Mat. Meh. Astronom. Vyp. 2},
   date={1973},
   pages={150--152, 165},
}

\bib{ab}{book}{
   author={Aliprantis, Charalambos D.},
   author={Burkinshaw, Owen},
   title={Positive operators},
   note={Reprint of the 1985 original},
   publisher={Springer, Dordrecht},
   date={2006},
   pages={xx+376},
}

\bib{ber}{article}{
   author={Bernau, S. J.},
   title={Lateral and Dedekind completion of Archimedean lattice groups},
   journal={J. London Math. Soc. (2)},
   volume={12},
   date={1975/76},
   number={3},
   pages={320--322},
}

\bib{erz}{article}{
   author={Bilokopytov, Eugene},
   title={Order continuity and regularity on vector lattices and on lattices
   of continuous functions},
   journal={Positivity},
   volume={27},
   date={2023},
   number={4},
   pages={Paper No. 52, 29},
}

\bib{et}{article}{
   author={Bilokopytov, Eugene},
   author={Troitsky, Vladimir G.},
   title={Order and uo-convergence in spaces of continuous functions},
   journal={Topology Appl.},
   volume={308},
   date={2022},
   pages={Paper No. 107999, 9},
}

\bib{et2}{article}{
   author={Bilokopytov, Eugene},
   author={Troitsky, Vladimir G.},
   title={Uniformly closed sublattices of finite codimension},
   journal={to appear in Linear and Multilinear algebra, \href{http://arxiv.org/abs/2210.08805}{arXiv:2210.08805}},
   date={2023},
}

\bib{birk}{book}{
   author={Birkhoff, Garrett},
   title={Lattice theory},
   series={American Mathematical Society Colloquium Publications, Vol. XXV},
   edition={3},
   publisher={American Mathematical Society, Providence, R.I.},
   date={1967},
   pages={vi+418},
}

\bib{engelking}{book}{
   author={Engelking, Ryszard},
   title={General topology},
   series={Sigma Series in Pure Mathematics, 6},
   publisher={Heldermann Verlag},
   place={Berlin},
   date={1989},
   pages={viii+529},
}

\bib{gj}{book}{
   author={Gillman, Leonard},
   author={Jerison, Meyer},
   title={Rings of continuous functions},
   series={The University Series in Higher Mathematics},
   publisher={D. Van Nostrand Co., Inc., Princeton, N.J.-Toronto-London-New
   York},
   date={1960},
   pages={ix+300},
}

\bib{gr}{book}{
   author={Gr\"{a}tzer, George},
   title={Lattice theory: foundation},
   publisher={Birkh\"{a}user/Springer Basel AG, Basel},
   date={2011},
   pages={xxx+613},
}

\bib{kv1}{article}{
   author={Kandi\'{c}, M.},
   author={Vavpeti\v{c}, A.},
   title={Topological aspects of order in $C(X)$},
   journal={Positivity},
   volume={23},
   date={2019},
   number={3},
   pages={617--635},
}

\bib{kr}{article}{
   author={Kandi\'{c}, Marko},
   author={Roelands, Mark},
   title={Prime ideals and Noetherian properties in vector lattices},
   journal={Positivity},
   volume={26},
   date={2022},
   number={1},
   pages={Paper No. 13, 26},
}

\bib{zl}{book}{
   author={Luxemburg, W. A. J.},
   author={Zaanen, A. C.},
   title={Riesz spaces. Vol. I},
   note={North-Holland Mathematical Library},
   publisher={North-Holland Publishing Co., Amsterdam-London; American
   Elsevier Publishing Co., New York},
   date={1971},
   pages={xi+514},
}

\bib{lux}{book}{
   author={Luxemburg, W. A. J.},
   title={Some aspects of the theory of Riesz spaces},
   series={University of Arkansas Lecture Notes in Mathematics},
   volume={4},
   publisher={University of Arkansas, Fayetteville, Ark.},
   date={1979},
   pages={iv+227},
}

\bib{nak}{article}{
   author={Nakano, Hidegor\^{o}},
   title={\"{U}ber das System aller stetigen Funktionen auf einem topologischen
   Raum},
   language={German},
   journal={Proc. Imp. Acad. Tokyo},
   volume={17},
   date={1941},
   pages={308--310},
}

\bib{vek}{article}{
   author={Veksler, A. I.},
   title={Realizations of Archimedean $k$-lineals},
   language={Russian},
   journal={Sibirsk. Mat. \v{Z}.},
   volume={3},
   date={1962},
   pages={7--16},
}

\bib{vg}{article}{
   author={Veksler, A. I.},
   author={Ge\u{\i}ler, V. A.},
   title={Order completeness and disjoint completeness of linear partially
   ordered spaces},
   language={Russian},
   journal={Sibirsk. Mat. \v{Z}.},
   volume={13},
   date={1972},
   pages={43--51},
}

\bib{weber}{book}{
   author={Weber, Martin R.},
   title={Finite elements in vector lattices},
   publisher={De Gruyter, Berlin},
   date={2014},
   pages={x+220},
}

\end{biblist}
\end{bibsection}

\end{document}